\theoremstyle{plain}
\newtheorem{theorem}{Theorem}
\newtheorem{corollary}[theorem]{Corollary}
\newtheorem{lemma}[theorem]{Lemma}
\theoremstyle{definition}
\newtheorem{remark}[theorem]{Remark}
\newtheorem*{remark*}{Remark}
\begin{document}
\title{Local probabilities for random walks conditioned to stay positive}
\thanks{Supported in part by the Russian Foundation for Basic Research grant 08-01-00078}
\author[Vatutin]{Vladimir A. Vatutin}
\address{Steklov Mathematical Institute RAS, Gubkin street 8, 19991, Moscow
\\
Russia}
\email{vatutin@mi.ras.ru}
\author[Wachtel]{Vitali Wachtel}
\address{Technische Universit{\"a}t M{\"u}nchen, Zentrum Mathematik, Bereich
M5, D-85747 Garching bei M{\"u}nchen}
\email{vakhtel@ma.tum.de}
\date{\today }

\begin{abstract}
Let $S_{0}=0,\{S_{n},\,n\geq 1\}$ be a random walk generated by a sequence
of i.i.d. random variables $X_{1},X_{2},...$ and let $\tau ^{-}=\min \{n\geq
1:S_{n}\leq 0\}$ and $\tau ^{+}=\min \{n\geq 1:S_{n}>0\}$. Assuming that the
distribution of $X_{1}$ belongs to the domain of attraction of an $\alpha $%
-stable law we study the asymptotic behavior, as $n\rightarrow \infty $, of
the local probabilities $\mathbf{P}(\tau ^{\pm }=n)$ and the conditional
local probabilities $\mathbf{P}(S_{n}\in \lbrack x,x+\Delta )|\tau ^{-}>n)$
for fixed $\Delta $ and $x=x(n)\in (0,\infty )$ $.$
\end{abstract}

\maketitle

\section{Introduction and main result}

Let $S_{0}:=0$, $S_{n}:=X_{1}+\ldots +X_{n},$ $n\geq 1$, be a random walk
where the $X_{i}$ are independent copies of a random variable $X$ and
\begin{equation*}
\tau ^{-}=\min \left\{ n\geq 1:\ S_{n}\leq 0\right\} \text{ \ and \ \ }\tau
^{+}=\min \left\{ n\geq 1:\ S_{n}>0\right\}
\end{equation*}%
be the first weak descending and first strict ascending ladder epochs of $%
\left\{ S_{n},\,n\geq 0\right\} $. \ The aim of this paper is to study, as $%
n\rightarrow \infty ,$ the asymptotic behavior of the local probabilities $%
\mathbf{P}\left( \tau ^{\pm }=n\right) $ and conditional local probabilities
$\mathbf{P}(S_{n}\in \lbrack x,x+\Delta )|\tau ^{-}>n)$ for fixed $\Delta >0$
and $x=x(n)\in \left( 0,\infty \right) $.

To formulate our results we let
\begin{equation*}
\mathcal{A}:=\{0<\alpha <1;\,|\beta |<1\}\cup \{1<\alpha <2;|\beta |\leq
1\}\cup \{\alpha =1,\beta =0\}\cup \{\alpha =2,\beta =0\}
\end{equation*}%
be a subset in $\mathbb{R}^{2}.$ For $(\alpha ,\beta )\in \mathcal{A}$ and a
random variable $X$ write $X\in \mathcal{D}\left( \alpha ,\beta \right) $ if
the distribution of $X$ belongs to the domain of attraction of a stable law
with characteristic function%
\begin{equation}
G_{\alpha ,\beta }\mathbb{(}t\mathbb{)}:=\exp \left\{ -c|t|^{\,\alpha
}\left( 1-i\beta \frac{t}{|t|}\tan \frac{\pi \alpha }{2}\right) \right\}
=\int_{-\infty }^{+\infty }e^{itx}g_{\alpha ,\beta }(u)du,\ c>0,  \label{std}
\end{equation}%
and, in addition, $\mathbf{E}X=0$ if this moment exists.

\ Denote $\mathbb{Z}:=\left\{ 0,\pm 1,\pm 2,..\right\} ,$ $\mathbb{Z}%
_{+}:=\left\{ 1,2,..\right\} $ and let $\left\{ c_{n},n\geq 1\right\} $ be a
sequence of positive integers specified by the relation%
\begin{equation}
c_{n}:=\inf \left\{ u\geq 0:\mu (u)\leq n^{-1}\right\} ,  \label{Defa}
\end{equation}%
where
\begin{equation*}
\mu (u):=\frac{1}{u^{2}}\int_{-u}^{u}x^{2}\mathbf{P}(X\in dx).
\end{equation*}%
It is known (see, for instance, \cite[Ch. XVII, \S 5]{FE}) that for every $%
X\in \mathcal{D}(\alpha ,\beta )$ the function $\mu (u)$ is regularly
varying with index $(-\alpha )$. This implies that $\left\{ c_{n},n\geq
1\right\} $ is a regularly varying sequence with index $\alpha ^{-1}$, i.e.,
there exists a function $l_{1}(n),$ slowly varying at infinity, such that
\begin{equation}
c_{n}=n^{1/\alpha }l_{1}(n).  \label{asyma}
\end{equation}%
In addition, the scaled sequence $\left\{ S_{n}/c_{n},\,n\geq
1\right\} $ converges in distribution, as $n~\rightarrow~\infty ,$
to the stable law given by (\ref{std}).

The following conditional limit theorem will be crucial for the rest of this
article.

\begin{theorem}
\label{Cru} If $X\in \mathcal{D}(\alpha ,\beta )$, then there exists a
nonnegative random variable $M_{\alpha ,\beta }$ with density $p_{\alpha
,\beta }(u)$ such that, for all $u_{2}>u_{1}\geq 0$,
\begin{equation}
\lim_{n\rightarrow \infty }\mathbf{P}\left( \frac{S_{n}}{c_{n}}\in \lbrack
u_{1},u_{2})\ \Big|\ \tau ^{-}>n\right) =\mathbf{P}(M_{\alpha ,\beta }\in
\lbrack u_{1},u_{2}))=\int_{u_{1}}^{u_{2}}p_{\alpha ,\beta }(v)dv.
\label{meander1}
\end{equation}
\end{theorem}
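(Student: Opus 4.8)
The plan is to establish the conditional weak convergence of $S_n/c_n$ given $\tau^->n$ by combining the duality/Wiener--Hopf machinery for ladder epochs with the known functional limit theorem for random walks in the domain of attraction of a stable law. First I would recall the standard connection: for any Borel set $B\subset(0,\infty)$,
\begin{equation*}
\mathbf{P}\bigl(S_n\in c_n B,\ \tau^->n\bigr)=\sum_{k=0}^{n}\mathbf{P}\bigl(\tau^-=k\bigr)^{\!\ast}\cdots,
\end{equation*}
but more usefully I would invoke the classical renewal-type identity linking $\{\tau^->n\}$ to the strict ascending ladder structure and the fact that the renewal function $V(x)$ of the strict ascending ladder heights is harmonic for the walk killed on leaving $(0,\infty)$. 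This produces the $h$-transform description: $\mathbf{P}(S_n\in dx\mid \tau^->n)$ should converge, after scaling by $c_n$, to the law of the corresponding stable meander evaluated at time $1$. So the target density $p_{\alpha,\beta}$ is the density of the value at time $1$ of the $\alpha$-stable Lévy process conditioned (in the sense of Doob, via the renewal/harmonic function) to stay positive on $[0,1]$.

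The key steps, in order, would be: (1) Use the asymptotics of $\mathbf{P}(\tau^->n)$ — which are regularly varying with a known index determined by $\rho=\mathbf{P}(\text{stable process}>0)$ via the Sparre--Andersen/Rogozin theory — to normalize; this is where the hypotheses $(\alpha,\beta)\in\mathcal{A}$ and $X\in\mathcal{D}(\alpha,\beta)$ enter, guaranteeing $\mathbf{P}(\tau^->n)\sim n^{\rho-1}\ell(n)$. (2) Write $\mathbf{P}(S_n/c_n\in[u_1,u_2),\tau^->n)$ using the Wiener--Hopf factorization to separate the walk into a pre-minimum part and a post-minimum part, or alternatively apply the invariance principle of Iglehart/Bolthausen-type directly: the discrete meander (bridge of the walk conditioned to stay positive) converges weakly in the Skorokhod space to the stable meander. (3) Deduce convergence of one-dimensional distributions at time $1$ from the functional convergence, checking that the evaluation map is a.s.\ continuous for the limiting meander, so that $\mathbf{P}(S_n/c_n\in[u_1,u_2)\mid \tau^->n)\to\mathbf{P}(M_{\alpha,\beta}\in[u_1,u_2))$ where $M_{\alpha,\beta}$ is the time-$1$ marginal of the stable meander, with a density $p_{\alpha,\beta}$ because the stable meander has absolutely continuous marginals. (4) Finally verify the limit holds for \emph{all} $u_2>u_1\ge 0$ including the endpoint $u_1=0$, which follows since $M_{\alpha,\beta}$ has no atom at $0$.

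I expect the main obstacle to be step (2)–(3): making rigorous the passage from the \emph{conditioned} discrete walk to the \emph{conditioned} continuous process. Unconditioned functional convergence $S_{\lfloor nt\rfloor}/c_n\Rightarrow$ stable process is classical, but conditioning on the null event $\{\tau^->n\}$ requires care — one cannot simply pass to the limit inside the conditioning. The standard route is to use the fact that $\mathbf{P}(\tau^->n)$ is regularly varying (so conditioning is "not too singular") together with a tightness argument and identification of finite-dimensional limits via the Wiener--Hopf decomposition, or equivalently via Doob $h$-transform with the harmonic function $V$ and a local-limit-type control showing $V(S_k)/V(c_n\cdot)$ behaves well. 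Handling the contribution near the endpoints — both the behavior of the walk at times close to $0$ (where the renewal function $V$ is unbounded in the relevant scaling) and times close to $n$ — is the delicate part; one typically splits $[0,n]$ into $[0,\varepsilon n]$, $[\varepsilon n,(1-\varepsilon)n]$, $[(1-\varepsilon)n,n]$, controls the end pieces uniformly in $n$ by renewal estimates, and applies the invariance principle on the middle piece, then lets $\varepsilon\downarrow 0$.
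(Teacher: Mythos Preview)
Your outline for the weak convergence part is reasonable and is essentially the route taken by Durrett, whom the paper simply \emph{cites} for the first equality in (\ref{meander1}); the authors do not reprove it. So your steps (1)--(3) are correct but re-derive a result the paper treats as known.

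The substantive difference is in how absolute continuity of $M_{\alpha,\beta}$ is obtained. You dispose of it in one clause: ``with a density $p_{\alpha,\beta}$ because the stable meander has absolutely continuous marginals.'' The authors explicitly say they could not locate a reference for this fact, and your proposal does not supply an argument either --- so as written this is a gap. The paper instead obtains the density as a \emph{by-product} of proving the Stone-type local theorem (Theorem~\ref{NormalDev'}): starting from Eppel's recurrence (\ref{rep1}) for $B_n(x)=\mathbf{P}(S_n\in(0,x),\tau^->n)$, they split the convolution sum into pieces $R_\varepsilon^{(1)},R_\varepsilon^{(2)},R_\varepsilon^{(3)},R^{(0)}$, control the end pieces via the concentration bounds of Lemmas~\ref{L0} and~\ref{Concentration}, and show that $c_n b_n(x)/\mathbf{P}(\tau^->n)$ converges uniformly to an explicit continuous function $f(0,1;x/c_n)$ (see (\ref{DenF}) and (\ref{Fconv})). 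Integrating and comparing with Durrett's weak limit identifies $f(0,1;\cdot)$ as the density $p_{\alpha,\beta}$ and yields the integral equation (\ref{C0}). A more direct route to absolute continuity alone is Lemma~\ref{LDensity}, which bounds $\mathbf{P}(M_{\alpha,\beta}\in[z,z+\varepsilon))\le C\varepsilon$ via the local concentration estimate (\ref{P11}).

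In short: your functional-limit approach would recover the weak convergence but leaves the existence of the density unjustified; the paper's approach is local-analytic (Eppel's identity plus Gnedenko/Stone) and delivers both the density and the local limit theorem in one stroke.
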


The validity of the first equality in (\ref{meander1}) was established by
Durrett \cite{Dur78} and we \ believe that the absolutely continuity of the
distribution of $M_{\alpha ,\beta }$ is also known in the literature, but
failed to find any reference. However, this fact will be a by-product of our
arguments and we include it in (\ref{meander1}) to simplify the statements
of the main theorems of the present paper.

Our first result is an analog of the classical Stone local limit theorem.

\begin{theorem}
\label{NormalDev'} Suppose $X\in \mathcal{D}(\alpha ,\beta )$ and the
distribution of $X$ is non-lattice. Then, for every $\Delta >0$,
\begin{equation}
c_{n}\mathbf{P}(S_{n}\in \lbrack x,x+\Delta )|\tau ^{-}>n)-\Delta {p_{\alpha
,\beta }(x/c_{n})}\rightarrow 0\text{ as }n\rightarrow \infty  \label{ND'}
\end{equation}%
uniformly in $x\in \left( 0,\infty \right) $.
\end{theorem}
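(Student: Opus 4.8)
The plan is to derive the local statement \eqref{ND'} from the integral statement of Theorem~\ref{Cru} by combining it with two classical ingredients: Stone's local limit theorem for the unconditioned walk, in the strong (uniform) form $\sup_{x\in\mathbb{R}}\big|c_n\mathbf{P}(S_n\in[x,x+\Delta))-\Delta g_{\alpha,\beta}(x/c_n)\big|\to0$, and the fluctuation-theoretic asymptotics of $\mathbf{P}(\tau^-=k)$ and of the overshoot $S_{\tau^-}$ on the event $\{\tau^-=k\}$. I would prove \eqref{ND'} separately for $x$ in a region $\{0<x\le Tc_n\}$, with $T$ arbitrary but fixed, and for the tail $\{x>Tc_n\}$, and then let $T\to\infty$.

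For the first region I would start from the elementary identity obtained by splitting on the value of $\tau^-$: for every Borel $B\subset(0,\infty)$,
\begin{equation*}
\mathbf{P}(\tau^->n,\,S_n\in B)=\mathbf{P}(S_n\in B)-\sum_{k=1}^{n}\int_{(-\infty,0]}\mathbf{P}(\tau^-=k,\,S_k\in dy)\,\mathbf{P}\big(S_{n-k}\in B-y\big).
\end{equation*}
Taking $B=[x,x+\Delta)$, Stone's theorem evaluates $\mathbf{P}(S_n\in B)$ and each inner probability $\mathbf{P}(S_{n-k}\in B-y)$; the values of $k$ with $n-k$ bounded, for which Stone's asymptotics do not apply directly, are controlled by elementary tail bounds, using $B-y\subset[x,\infty)$ and the summability of $\mathbf{P}(\tau^-=\cdot)$. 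One is then reduced to summing $\sum_{k}\mathbf{P}(\tau^-=k)\,\mathbf{E}\big[c_{n-k}^{-1}\,g_{\alpha,\beta}\big((x-S_{\tau^-})/c_{n-k}\big)\,\big|\,\tau^-=k\big]$ asymptotically in $n$, uniformly in $x$, using the regular variation of $k\mapsto\mathbf{P}(\tau^-=k)$ and of $n\mapsto c_n$. Since both terms on the right-hand side of the identity are of exact order $c_n^{-1}$ whereas the answer is of the smaller order $c_n^{-1}\mathbf{P}(\tau^->n)$, one must carry enough precision for the leading parts of the two pieces to cancel. The surviving function of $x/c_n$ is finally identified with $\Delta\,p_{\alpha,\beta}(x/c_n)$ by appealing to \eqref{meander1}: on compacts of $(0,\infty)$ it is pinned down by the integral statement together with the continuity of the surviving function (inherited from $g_{\alpha,\beta}$ and from the continuous limit law of the overshoot), and the vanishing of $p_{\alpha,\beta}$ at the origin, which falls out of the explicit convolution form of the surviving function, also disposes of the sub-region $x=o(c_n)$, where both sides of \eqref{ND'} tend to $0$.

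For the tail $\{x>Tc_n\}$ a crude estimate suffices: $\mathbf{P}(\tau^->n,\,S_n\ge x)\le\mathbf{P}(S_n\ge x)$ decays in $x$ at least like the stable tail, which beats the polynomial lower bound on $\mathbf{P}(\tau^->n)$ once $T$ is large, while $p_{\alpha,\beta}(x/c_n)\le\sup_{u\ge T}p_{\alpha,\beta}(u)\to0$ as $T\to\infty$ since $p_{\alpha,\beta}$ is integrable; letting $T\to\infty$ closes the argument. The genuine difficulty is concentrated in the middle step: extracting, uniformly in $x\in(0,\infty)$, the exact second-order asymptotics of $\mathbf{P}(\tau^->n,\,S_n\in[x,x+\Delta))$ — that is, the local refinement of Durrett's theorem. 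The non-lattice hypothesis is used precisely there, through Stone's theorem, and the refinement cannot be obtained merely by "smoothing" \eqref{meander1} with a block of increments, as that would already presuppose a local limit theorem for $S_{n-j}$ given $\{\tau^->n-j\}$.
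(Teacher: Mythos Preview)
Your decomposition has a scale mismatch that Stone's theorem cannot bridge. The first-passage identity
\[
\mathbf{P}(\tau^->n,\,S_n\in B)=\mathbf{P}(S_n\in B)-\sum_{k=1}^{n}\int_{(-\infty,0]}\mathbf{P}(\tau^-=k,\,S_k\in dy)\,\mathbf{P}(S_{n-k}\in B-y)
\]
writes the target as a difference of two quantities each of exact order $c_n^{-1}$, whereas the answer is of the smaller order $c_n^{-1}\mathbf{P}(\tau^->n)$. You note this and say one must ``carry enough precision for the leading parts of the two pieces to cancel''; but Stone's theorem only delivers each piece to accuracy $o(c_n^{-1})$, and there is no second-order Stone theorem available under the bare hypothesis $X\in\mathcal{D}(\alpha,\beta)$. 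Hence the $o(c_n^{-1})$ error already swamps the quantity you are after, and the argument stalls before any identification with $p_{\alpha,\beta}$ can take place. The paper avoids this altogether by using the Spitzer--Eppel recursion (Lemma~\ref{Representation}), which produces $nb_n(x)$ as a \emph{sum} of nonnegative terms, each of the same order as $nb_n(x)\asymp n\mathbf{P}(\tau^->n)/c_n$; there is no cancellation, and first-order Stone is enough. The edge pieces of that sum are controlled by the a~priori concentration bound $b_n(x)\le CH(x)/(nc_n)$ of Lemma~\ref{Concentration}, itself derived from the same recursion; your sketch has no analogue of this ingredient.

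Two further problems. First, you invoke ``the regular variation of $k\mapsto\mathbf{P}(\tau^-=k)$''; in this paper that is Theorem~\ref{LadEpoch}, proved \emph{from} Theorems~\ref{NormalDev'}--\ref{SmallDev}, so using it here is circular (only the tail $\mathbf{P}(\tau^->n)$ is available a~priori, and a monotone-density argument does not apply). Second, your crude tail bound for $x>Tc_n$ does not work: after multiplying by $c_n$ and dividing by $\mathbf{P}(\tau^->n)$ one gets a quantity of order $c_n\,\mathbf{P}(S_n\ge Tc_n)/\mathbf{P}(\tau^->n)$, and since $\mathbf{P}(S_n\ge Tc_n)$ tends to a positive limit while $c_n/\mathbf{P}(\tau^->n)\to\infty$, this blows up for every fixed $T$. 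The paper does not split on $T$ at all; the Eppel representation handles all $x>0$ uniformly in one stroke.
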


For the case when the distribution of $X$ belongs to the domain of
attraction of the normal law, that is, when $X\in \mathcal{D}(2,0)$ relation
(\ref{ND'}) \ has been proved by Caravenna \cite{Car05}.

If the ratio $x/c_{n}$ varies with $n$ in such a way that $x/c_{n}\in
(b_{1},b_{2})$ for some $0<b_{1}<b_{2}<\infty $, we can rewrite (\ref{ND'})
as
\begin{equation*}
c_{n}\mathbf{P}(S_{n}\in \lbrack x,x+\Delta )|\tau ^{-}>n)\sim \Delta {%
p_{\alpha ,\beta }(x/c_{n})}\quad \text{as }n\rightarrow \infty .
\end{equation*}%
However, if $x/c_{n}\rightarrow 0$, then, in view of
\begin{equation*}
\lim_{z\downarrow 0}{p_{\alpha ,\beta }(z)=0}
\end{equation*}%
(see (\ref{75}) below), relation (\ref{ND'}) gives only
\begin{equation}
c_{n}\mathbf{P}(S_{n}\in \lbrack x,x+\Delta )|\tau ^{-}>n)=o\left( 1\right)
\quad \text{as }n\rightarrow \infty .  \label{Out1}
\end{equation}

Our next theorem refines (\ref{Out1}) in the mentioned domain of small
deviations, i.e., when $x/c_{n}\rightarrow 0.$ To formulate the desired
statement we need some additional notation.

Set $\chi ^{+}:=S_{\tau ^{+}}$ and introduce the renewal function
\begin{equation}
H(u):=\mathrm{I}\{u>0\}+\sum_{k=1}^{\infty }\mathbf{P}(\chi _{1}^{+}+\ldots
+\chi _{k}^{+}<u).  \label{ren}
\end{equation}%
Clearly, $H$ is a left-continuous function.

\begin{theorem}
\label{SmallDev'} Suppose $X\in \mathcal{D}(\alpha ,\beta )$ and the
distribution of $X$ is non-lattice. Then
\begin{equation}
c_{n}\mathbf{P}(S_{n}\in \lbrack x,x+\Delta )|\tau ^{-}>n)\sim g_{\alpha
,\beta }(0)\frac{\int_{x}^{x+\Delta }H(u)du}{n\mathbf{P}\left( \tau
^{-}>n\right) }\quad \text{as }n\rightarrow \infty  \label{SD'}
\end{equation}%
uniformly in $x\in (0,\delta _{n}c_{n}]$, where $\delta _{n}\rightarrow 0$
as $n\rightarrow \infty $.
\end{theorem}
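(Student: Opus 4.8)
The plan is to derive the small–deviation asymptotics from a decomposition of the event $\{\tau^->n,\,S_n\in[x,x+\Delta)\}$ according to the last time before $n$ at which the walk attains a new \emph{strict ascending} ladder value, combined with the \emph{duality} for the descending ladder structure. Concretely, for a path staying positive up to time $n$, let $\ell$ be the index of the last strict ascending ladder epoch not exceeding $n$; then the increments of the walk after that epoch form (by time reversal) a path which stays $\le 0$ relative to its endpoint. Splitting the probability over $\ell$ and over the location $y\in[x,x+\Delta)$ of the ladder height at time $\ell$, the sum over ladder epochs produces exactly the renewal measure $dH$, and the remaining factor is the probability that a walk of length $n-\ell$ started afresh makes an excursion below zero and lands the total increment in an appropriate interval. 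Since $x=o(c_n)$, the ladder epoch $\ell$ is typically of order $n$ and only $y$ of order $O(1)$ contributes, which is what turns the convolution into the clean product $\int_x^{x+\Delta}H(u)\,du$ times a "bulk" local probability for a walk conditioned to stay negative.

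The first step is to make this decomposition rigorous and to identify the bulk factor. Using that $S_n/c_n$ conditioned on $\{\tau^->n\}$ converges to $M_{\alpha,\beta}$ (Theorem~\ref{Cru}) and, more quantitatively, the Stone-type statement of Theorem~\ref{NormalDev'}, I would show that the relevant local probability for the post-$\ell$ piece behaves like $c_{n-\ell}^{-1}$ times a density that, as the argument $\to 0$, is governed by the value $g_{\alpha,\beta}(0)$ of the stable density at the origin — this is where the factor $g_{\alpha,\beta}(0)$ in \eqref{SD'} enters, via the asymptotics of $p_{\alpha,\beta}(z)$ as $z\downarrow 0$ (the relation labeled \eqref{75}) together with the known tail/slowly-varying behavior of $\mathbf P(\tau^->n)$ and $c_n$. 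The key analytic input is the duality identity
\begin{equation*}
\mathbf P(S_n\in[x,x+\Delta),\,\tau^->n)=\sum_{k\ge 0}\mathbf P\big(\chi_1^++\dots+\chi_k^+\in[x,x+\Delta)\cap\text{something},\ \dots\big),
\end{equation*}
which I would set up carefully so that summation over $k$ and over the residual length reproduces $\int_x^{x+\Delta}H(u)\,du$ and a factor asymptotically equal to $g_{\alpha,\beta}(0)/(n\,\mathbf P(\tau^->n))$ after using \eqref{asyma} and a local renewal theorem for the strict ascending ladder heights (their mean may be infinite, so Garsia–Lamperti / Erickson-type local renewal estimates are needed).

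The uniformity in $x\in(0,\delta_n c_n]$ will be obtained by a two-regime argument: for $x$ in a \emph{fixed} compact set one uses the renewal theorem for $H$ (so $H(u)$ is essentially linear on average, and $\int_x^{x+\Delta}H(u)\,du$ is the honest quantity) together with a fixed-length truncation of the post-$\ell$ block; for $x$ growing but still $o(c_n)$ one rescales and matches the estimate against Theorem~\ref{NormalDev'}, checking that the error terms there are $o\big(\Delta\, p_{\alpha,\beta}(x/c_n)\big)$ in exactly the overlap region, so the two asymptotics agree.

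The main obstacle I expect is controlling the contribution of the "middle" part of the path — bounding the probability that the walk, while staying positive, spends a non-negligible fraction of time at heights comparable to $c_n$ before its last ladder epoch near $n$, and showing this is negligible uniformly in $x=o(c_n)$. This requires good uniform upper bounds on $\mathbf P(S_m\in dz\mid\tau^->m)$ of the right order $c_m^{-1}$ \emph{for all} $m\le n$ and all $z$ (not just in the bulk), i.e. a uniform version of Theorem~\ref{NormalDev'} with explicit control of the error, plus summability of these bounds when convolved against the renewal measure $dH$. Establishing such uniform local bounds, and interchanging the resulting limits/sums, will be the technical heart of the argument; everything else (the duality decomposition, the local renewal theorem for $\{\chi_k^+\}$, and the identification of constants via $g_{\alpha,\beta}(0)$ and \eqref{75}) is comparatively standard once those bounds are in place.
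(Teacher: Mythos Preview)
Your plan differs substantially from the paper's, and there are two concrete gaps.

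\textbf{Circularity.} You propose to extract the constant $g_{\alpha,\beta}(0)$ from the behaviour of $p_{\alpha,\beta}(z)$ as $z\downarrow 0$, citing \eqref{75}. But \eqref{75} only says $p_{\alpha,\beta}(z)\to 0$; the precise rate $p_{\alpha,\beta}(z)\sim Cz^{\alpha\rho}$ (and the identification of $C$ in terms of $g_{\alpha,\beta}(0)$) is Theorem~\ref{Density}, which the paper proves \emph{by combining} Theorems~\ref{NormalDev'} and~\ref{SmallDev'}. Your ``overlap'' matching against Theorem~\ref{NormalDev'}, which requires the error there to be $o(\Delta\,p_{\alpha,\beta}(x/c_n))$, likewise needs to know the size of $p_{\alpha,\beta}(x/c_n)$, so it is circular for the same reason. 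In the paper, $g_{\alpha,\beta}(0)$ enters much more directly: the Spitzer--Eppel recurrence \eqref{rep1} expresses $nb_n(x)$ as a convolution of $B_{n-k}$ against the \emph{unconstrained} local probabilities $\mathbf{P}(S_k\in[x-y,x-y+1))$, and for $k$ near $n$ with $x-y=o(c_k)$ these are $\sim g_{\alpha,\beta}(0)/c_k$ by Stone's local limit theorem. The sum $\sum_{k}B_k(x)$ is then identified with $H(x)$ via the duality \eqref{Dual}. No information about $p_{\alpha,\beta}$ near zero is used.

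\textbf{The decomposition and the error control.} Your ``last strict ascending ladder epoch $\ell\le n$'' decomposition, applied to a path with $\tau^->n$, does not decouple cleanly: the post-$\ell$ block must lie in the strip $(0,S_\ell]$, a genuinely two-sided constraint, while the pre-$\ell$ block must simultaneously stay positive and have $\ell$ as a ladder epoch --- these are not independent, and the ladder-height renewal measure $dH$ does not drop out as you suggest. One can repair this by first applying duality (so that $\{\tau^->n,\,S_n\in dx\}$ becomes $\{n$ is a strict ascending ladder epoch, $S_n\in dx\}$) and then splitting at the penultimate ladder epoch; but the resulting recurrence has kernel $\mathbf{P}(\tau^+=m,\,\chi^+\in[z,z+\Delta))$, and you would then need a local limit theorem for this bivariate quantity with $m$ large and $z$ bounded, which is a problem of the same type as the one you are trying to solve. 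Either route ultimately requires, for the control of the ``middle'' terms, not merely the uniform bound $b_m(z)\le C/c_m$ that you mention (this is Lemma~\ref{L0}) but the sharper concentration estimate $b_m(z)\le CH(z)/(mc_m)$ for $z\le c_m$ (Lemma~\ref{Concentration}); this bound, established via the same recurrence \eqref{rep2}, is the technical core of the paper's proof and is absent from your outline.
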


We continue by considering the lattice case and say that a random variable $%
X $ is $(h,a)-$lattice if the distribution of $X$ is lattice with span $h>0$
and shift $a\in \lbrack 0,h)$, i.e., the $h$ is the maximal number such that
the support of the distribution of $X$ is contained in the set $\left\{ a+kh,%
\text{ }k=0,\pm 1,\pm 2,...\right\} $.

\begin{theorem}
\label{NormalDev} Suppose $X\in \mathcal{D}(\alpha ,\beta )$ and is $(h,a)- $%
lattice. Then
\begin{equation}
c_{n}\mathbf{P}(S_{n}=an+x|\tau ^{-}>n)-h{p_{\alpha ,\beta }((an+x)/c_{n})}%
\rightarrow 0\text{ as }n\rightarrow \infty  \label{ND}
\end{equation}%
uniformly in $x\in \left( -an,\infty \right) \cap h\mathbb{Z}$.
\end{theorem}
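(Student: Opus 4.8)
The plan is to run the argument of Theorem~\ref{NormalDev'}, with Gnedenko's local limit theorem for lattice walks playing the role of Stone's theorem and with the span $h$ in the role of the interval length $\Delta$. After rescaling space by $h$ we may assume $h=1$, so that $S_n$ is supported on $na+\mathbb{Z}$ and it suffices to prove
\begin{equation*}
c_n\mathbf{P}(S_n=y|\tau^->n)-p_{\alpha,\beta}(y/c_n)\to0
\end{equation*}
uniformly over $y\in(na+\mathbb{Z})\cap(0,\infty)$ (the probability vanishes for $y\le0$). The starting point is the fluctuation identity obtained by reversing $X_1,\dots,X_n$:
\begin{equation*}
\mathbf{P}(S_n=y,\ \tau^->n)=\sum_{k=1}^{n}\mathbf{P}(T_k=n,\ H_k=y),\qquad y>0,
\end{equation*}
where $T_k=\tau_1^{+}+\dots+\tau_k^{+}$ and $H_k=\chi_1^{+}+\dots+\chi_k^{+}$ are the $k$th strict ascending ladder epoch and the $k$th strict ascending ladder height, the pairs $(\tau_i^{+},\chi_i^{+})$ being i.i.d. copies of $(\tau^{+},\chi^{+})$; summing this over $y$ recovers the Sparre~Andersen relation $\sum_k\mathbf{P}(T_k=n)=\mathbf{P}(\tau^->n)$. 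Hence the assertion reduces to a bivariate local renewal statement for the random walk $(T_k,H_k)$, whose increment $(\tau^{+},\chi^{+})$ lies in the domain of attraction of a bivariate stable law of indices $\rho$ and $\alpha\rho$, with $\rho$ the positivity parameter of $G_{\alpha,\beta}$; recall that $H$ from (\ref{ren}) is regularly varying of index $\alpha\rho$ and $\mathbf{P}(\tau^->n)$ of index $\rho-1$, so that $H(c_n)\asymp n\mathbf{P}(\tau^->n)$.

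To prove this renewal statement I would treat three ranges of $y/c_n$ separately. When $y\asymp c_n$ (the bulk), the dominant terms in the sum are those with $k$ of the order of the typical number of ladder epochs before time $n$, and a normal-range bivariate local limit theorem for $(T_k,H_k)$ gives a limit $c_n\mathbf{P}(S_n=y|\tau^->n)\to\psi(y/c_n)$; the function $\psi$ is then identified with $h\,p_{\alpha,\beta}$ by summing the identity over $y/c_n\in[u_1,u_2)$ and invoking Theorem~\ref{Cru} (this comparison also delivers the absolute continuity of $M_{\alpha,\beta}$ claimed there, and, if one prefers, the bulk may equally well be handled by combining Theorem~\ref{Cru} with Gnedenko's theorem for the last block of steps). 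When $y=o(c_n)$ (small deviations), the dominant $k$ are small, so $\{T_k=n\}$ is a large deviation of $T_k$ caused by a single big $\tau_i^{+}$, while $\sum_k\mathbf{P}(H_k=y)$ produces a renewal factor $H(y)$; this should yield the lattice analogue of (\ref{SD'}),
\begin{equation*}
c_n\mathbf{P}(S_n=y|\tau^->n)\sim g_{\alpha,\beta}(0)\,h\,\frac{H(y)}{n\mathbf{P}(\tau^->n)},
\end{equation*}
with $h\,H(y)$ replacing $\int_x^{x+\Delta}H(u)\,du$. Since $H$ is regularly varying with positive index and $H(c_n)\asymp n\mathbf{P}(\tau^->n)$, the right-hand side is $o(1)$ uniformly on $(0,\delta_nc_n]$, while $p_{\alpha,\beta}(y/c_n)\to0$ there by (\ref{75}); hence (\ref{ND}) holds in this range. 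When $y/c_n\to\infty$ (large deviations), one uses the identity once more: the event $\{H_k=y\}$ is then realized through one large ladder height $\chi_i^{+}\approx y$, and combining this one-big-jump picture with renewal estimates for $\sum_k\mathbf{P}(T_k=n)$ and with the tail of $\chi^{+}$ (equivalently of $X$) yields $c_n\mathbf{P}(S_n=y|\tau^->n)\sim h\,p_{\alpha,\beta}(y/c_n)$, in accordance with the polynomial decay of $p_{\alpha,\beta}$ at $+\infty$. For all large $n$ the three ranges cover $(0,\infty)$ once the thresholds are chosen compatibly (the bulk estimate carried down to $y/c_n$ above a slowly vanishing sequence, the small-deviation one up to $\delta_nc_n$), and a diagonalization merges them into a single bound uniform in $x\in(-an,\infty)\cap h\mathbb{Z}$.

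The main obstacle will be the large-deviation range, and more generally the need for \emph{uniform} local limit theorems (normal, small-deviation and large-deviation) for the bivariate renewal $(T_k,H_k)$, with control uniform in both $k$ and $y$; supplying these rests on the delicate theory of ladder variables of random walks in the stable domain (the tail of $\chi^{+}$, the joint stable behaviour of $(\tau^{+},\chi^{+})$, and renewal-type sums built from them), and is where most of the work lies. Reconciling the three ranges at their interfaces and checking genuine uniformity over the whole lattice is a further, more routine, nuisance.
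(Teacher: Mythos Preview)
Your ladder-height route is genuinely different from what the paper does. The paper does not pass through the ascending ladder process $(T_k,H_k)$ at all. It works instead with Eppel's recurrence (Lemma~\ref{Representation}, and its lattice form Lemma~\ref{Representation1}): differentiating the Spitzer--Baxter identity in the generating variable gives
\[
n\mathcal{B}_n(t)=\mathcal{S}_n(t)+\sum_{k=1}^{n-1}\mathcal{B}_{n-k}(t)\mathcal{S}_k(t),
\]
which on the measure side expresses $n\,b_n(x)$ as a convolution of the \emph{unconditioned} local probabilities $\mathbf{P}(S_k\in[x-y,x-y+1))$ against the \emph{conditioned} law $dB_{n-k}(y)$. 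Splitting the sum at $[\varepsilon n]$ and $[(1-\varepsilon)n]$, one applies Stone/Gnedenko to the unconditioned factor and Durrett's conditional limit (Theorem~\ref{Cru}) to the other, obtaining $c_nb_n(x)/\mathbf{P}(\tau^->n)-f(0,1;x/c_n)\to0$ \emph{uniformly in $x>0$}, with $f(0,1;\cdot)$ then identified as $p_{\alpha,\beta}$ via~(\ref{deniq}). No case distinction in $x/c_n$ is needed; the only nonstandard inputs are the concentration bounds of Lemmas~\ref{L0}--\ref{Concentration}. The lattice case is literally the same computation with Gnedenko in place of Stone, as you say in your first sentence --- but the argument you then sketch is not that computation.

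Your programme is in principle sound --- the identity $\mathbf{P}(S_n=y,\tau^->n)=\sum_k\mathbf{P}(T_k=n,H_k=y)$ is correct --- but it trades a short algebraic recurrence for substantially heavier machinery. The bivariate Gnedenko-type local theorem for $(T_k,H_k)$, uniform in $k$, is not available off the shelf (bivariate stable attraction of $(\tau^+,\chi^+)$ is known from Doney and Greenwood--Omey--Teugels, but its \emph{local} refinement, with two different lattices in the two coordinates, is another matter), and your large-deviation tier asks for more than the theorem needs: for~(\ref{ND}) it suffices that both sides are $o(1)$ uniformly when $y/c_n\to\infty$, not that $c_n\mathbf{P}(S_n=y\mid\tau^->n)\sim h\,p_{\alpha,\beta}(y/c_n)$. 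Your parenthetical alternative for the bulk --- combine Theorem~\ref{Cru} with Gnedenko on a terminal block --- is much closer in spirit to what the paper actually does; the difference is that the paper's decomposition is Eppel's recurrence rather than a first/last-block split, and this is precisely what makes the uniformity in $x$ fall out without a separate treatment of the tails.
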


For $X\in \mathcal{D}(2,0)$ and being $\left( h,0\right) -$lattice relation (%
\ref{ND}) has been obtained by Bryn-Jones and Doney \cite{BJD06}.

\begin{theorem}
\label{SmallDev} Suppose $X\in \mathcal{D}(\alpha ,\beta )$ and is $(h,a)-$%
lattice. Then
\begin{equation}
c_{n}\mathbf{P}(S_{n}=an+x|\tau ^{-}>n)\sim hg_{\alpha ,\beta }(0)\frac{%
H(an+x)}{n\mathbf{P}\left( \tau ^{-}>n\right) }\quad \text{as }n\rightarrow
\infty  \label{SD}
\end{equation}%
uniformly in $x\in (-an,-an+\delta _{n}c_{n}]\cap h\mathbb{Z}$, where $%
\delta _{n}\rightarrow 0$ as $n\rightarrow \infty .$
\end{theorem}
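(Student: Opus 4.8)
The plan is to derive the lattice small-deviation estimate (\ref{SD}) by paralleling the non-lattice argument for Theorem~\ref{SmallDev'}, replacing integrals over $[x,x+\Delta)$ by the single lattice point value and using the lattice local limit theorem in place of Stone's theorem. Concretely, write $\mathbf{P}(S_{n}=an+x,\tau^{-}>n)$ and decompose the path at the last time $k$ before $n$ that the walk attains a new strict ascending ladder height, or, more convenient here, split the trajectory at an intermediate time: for a suitable $m=m(n)$ with $m\to\infty$ and $m=o(n)$, condition on $(\tau^{-}>m, S_{m}=y)$ for $y$ ranging over the lattice, and then on the event $\{\tau^{-}>n\}$ continue from level $y$. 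The key identity is that, on $\{\tau^{-}>m\}$, the reversed increments let one express $\mathbf{P}(\tau^{-}>m, S_{m}\in dy)$ in terms of the renewal function $H$ of the strict ascending ladder heights $\chi^{+}$, via the classical duality $\sum_{m}\mathbf{P}(\tau^{-}>m, S_{m}\in dy)=$ (renewal measure of $\{\chi^{+}_{1}+\cdots\}$), i.e.\ the Wiener--Hopf factorization. This is exactly why $H(an+x)$ appears on the right of (\ref{SD}).

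The main steps I would carry out are: (i) Fix the spatial scale. Since $x\in(-an,-an+\delta_{n}c_{n}]$ means $an+x=o(c_{n})$, the endpoint of the walk is in the small-deviation regime, so $H(an+x)$ is regularly varying of index $\alpha$ (by the standard relation $H(u)\sim u/(\,|\mathbf{E}\chi^{+}|\,)$ in the finite-mean case, or more generally $H(u)\sim u^{\alpha}/(\cdots)\,l(u)$ via the Wiener--Hopf / Tauberian connection with $\mathbf{P}(\tau^{-}>n)$). (ii) Use the decomposition $\mathbf{P}(S_{n}=an+x,\tau^{-}>n)=\sum_{0<y}\mathbf{P}(S_{m}=an'+y,\tau^{-}>m)\,\mathbf{P}(S_{n-m}=a(n-m)+(x-\cdots),\ \text{stays above}\ -y)$; the inner probability, by Theorem~\ref{NormalDev} applied at time $n-m$ (or directly by the lattice Stone-type local limit theorem for the killed walk) is $\sim \frac{h}{c_{n-m}}p_{\alpha,\beta}((a(n-m)+\cdots)/c_{n-m})$ when the starting level $y$ is of order $c_{n}$, which is where the mass of $\mathbf{P}(S_{m}=\cdot,\tau^{-}>m)$ concentrates. (iii) Crucially, because $an+x=o(c_{n})$, the function $H$ over the relevant range of summation behaves like $H(\text{level at time }n)$ up to the desired asymptotic, so $H$ factors out and the remaining sum reconstitutes $g_{\alpha,\beta}(0)$ (the value of the limiting stable density at $0$, which is where the rescaled position $(an+x)/c_{n}\to 0$ lands) together with the normalizing factor $n\mathbf{P}(\tau^{-}>n)$ coming from $\sum_{m}\mathbf{P}(\tau^{-}>m,\cdot)$ and the identity $\mathbf{P}(\tau^{-}>n)\sim$ (const)$\,n^{\rho-1}l(n)$. (iv) Finally divide by $\mathbf{P}(\tau^{-}>n)$ to pass from the joint probability to the conditional one, obtaining (\ref{SD}), and check the uniformity in $x$ over $(-an,-an+\delta_{n}c_{n}]\cap h\mathbb{Z}$ by noting all error terms are uniform once $an+x=o(c_{n})$.

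I expect the main obstacle to be step~(iii): controlling the sum over the intermediate level $y$ uniformly and showing that $H(an+x)$ genuinely factors out. One must show that the contribution to $\mathbf{P}(S_{n}=an+x,\tau^{-}>n)$ from intermediate levels $y$ that are \emph{not} of order $c_{n}$ (either too small or too large) is negligible, and that on the dominant range the ratio $H(\text{relevant level})/H(an+x)$ can be replaced by $1$ with a vanishing error. The delicate point is that $H$ is evaluated at the endpoint $an+x=o(c_{n})$ while the decomposition naturally produces $H$ at levels comparable to $c_{n}$; the resolution is that the correct decomposition is not at a free intermediate time but rather at the \emph{last ladder epoch before reaching the terminal small level}, or equivalently one uses the identity $\mathbf{P}(S_{n}=an+x,\tau^{-}>n)=\sum_{k}\mathbf{P}(\text{ladder height process hits }an+x\text{ at step }k)\cdot(\ldots)$, so that $H(an+x)=\sum_{k}\mathbf{P}(\chi^{+}_{1}+\cdots+\chi^{+}_{k}<an+x)$ appears intrinsically. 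Making this precise — i.e.\ separating the ``short excursion to the small terminal level'' from the ``bulk of the trajectory'' and invoking the non-lattice proof's analog of the renewal-theoretic estimate in the lattice setting — is the crux; once it is in place, the remaining arguments are the lattice analogues of those already used for Theorem~\ref{SmallDev'}, with Theorem~\ref{NormalDev} playing the role that Theorem~\ref{NormalDev'} plays there.
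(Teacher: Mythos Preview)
Your proposal has a genuine structural gap: the pathwise Markov decomposition you describe does not lead to the result, and the paper does \emph{not} proceed this way.

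If you split at a deterministic time $m$ and condition on $\{S_{m}=y,\ \tau^{-}>m\}$, the second factor is $\mathbf{P}\bigl(S_{n-m}=an+x-y,\ \min_{j\le n-m}S_{j}>-y\bigr)$, a probability for a walk with barrier at $-y$, not at $0$. Theorem~\ref{NormalDev} does not apply to this object, and invoking it here is circular in any case, since its analogue in the small-deviation regime is exactly what you are trying to prove. Moreover, the duality $\sum_{m}\mathbf{P}(\tau^{-}>m,\,S_{m}\in dy)=H(dy)$ produces $H$ at the \emph{intermediate} level $y$ (typically of order $c_{n}$), not at the endpoint $an+x=o(c_{n})$; you recognize this yourself in step~(iii), but the proposed fix via a last-ladder-epoch decomposition is left as a heuristic.

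The paper avoids all of this by using not a pathwise split but the algebraic recurrence (Lemma~\ref{Representation1}, the lattice form of Eppel's identity)
\[
n\,\bar b_{n}(x)=\mathbf{P}(\bar S_{n}=x)+\sum_{k=1}^{n-1}\sum_{y\in\mathcal I_{x}(k,n)}\bar b_{k}(x-y)\,\mathbf{P}(\bar S_{n-k}=y),
\]
obtained by differentiating the Wiener--Hopf generating-function identity. The crucial feature is that the second factor $\mathbf{P}(\bar S_{n-k}=y)$ is \emph{unconditioned}; for $k\le\varepsilon n$ and $y=o(c_{n})$ Gnedenko's local limit theorem gives $\mathbf{P}(\bar S_{n-k}=y)=(g_{\alpha,\beta}(0)+o(1))/c_{n-k}$, so this range contributes $(g_{\alpha,\beta}(0)+o(1))c_{n}^{-1}\sum_{k\le\varepsilon n}B_{k}(a(n-k)+x)$. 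The sum is then identified with $H(an+x)$ directly via the duality $1+\sum_{j\ge1}B_{j}(u)=H(u)$, using the concentration bound $B_{k}(u)\le C\,uH(u)/(kc_{k})$ (Lemma~\ref{Concentration}) to show the tail $\sum_{k>\varepsilon n}B_{k}$ is negligible when $u=an+x=o(c_{n})$. The complementary range $k>\varepsilon n$ in the recurrence is killed by the same concentration bound for $\bar b_{k}$. This is why $H(an+x)$ appears at the \emph{endpoint} level: it comes from summing $B_{k}(an+x)$ over $k$, not from summing a renewal measure over an intermediate spatial variable. Your sketch lacks this mechanism entirely.
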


Note that Alili and Doney \cite{AD99} established (\ref{SD}) under the
assumptions $X$ is $(h,0)-$lattice and $\mathbf{E}X^{2}<\infty $. Bryn-Jones
and Doney \cite{BJD06} generalized their results to the $(h,0)-$lattice $%
X\in \mathcal{D}(2,0)$.

The next theorem describes the asymptotic behavior of the density function $%
p_{\alpha ,\beta }$ at zero. The explicit form of $p_{\alpha ,\beta }$ is
known only for ${\alpha =2,\beta =0}$: $p_{2,0}(x)=xe^{-x^{2}/2}\mathrm{I}%
(x>0)$. \ For this reason we deduce an integral equation for $p_{\alpha
,\beta }$ (see (\ref{C0}) below) and using Theorems \ref{NormalDev'}-\ref%
{SmallDev} find the asymptotic behavior of $p_{\alpha ,\beta }(z)$ at zero.

\begin{theorem}
\label{Density} For every $(\alpha ,\beta )\in \mathcal{A}$ there exists a
constant $C>0$ such that
\begin{equation*}
p_{\alpha ,\beta }(z)\sim Cz^{\alpha \rho }\text{ as }z\downarrow 0,
\end{equation*}%
where $\rho :=\int_{0+}^{\infty }g_{\alpha ,\beta }(u)du$.
\end{theorem}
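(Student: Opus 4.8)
The strategy has two stages: derive the integral equation for $p_{\alpha,\beta}$ announced in the introduction (the equation (\ref{C0})), and then read off its behaviour at $0$. For the first stage I would pass to the limit in an exact identity for the walk conditioned to stay positive --- for instance the first-passage decomposition
\begin{equation*}
\mathbf{P}(\tau^-=n)=\int_{(0,\infty)}\mathbf{P}(S_{n-1}\in dy,\,\tau^->n-1)\,\mathbf{P}(X\le -y),
\end{equation*}
or a refinement of it (replacing $-y$ by $-y-t$, or conditioning at an intermediate time). The role of Theorems \ref{NormalDev'}--\ref{SmallDev} is precisely to make this limit rigorous, uniformly in $y$: on the ``normal'' range $y\asymp c_n$ one substitutes $\mathbf{P}(S_{n-1}\in dy,\tau^->n-1)\approx\mathbf{P}(\tau^->n-1)\,p_{\alpha,\beta}(y/c_{n-1})\,dy/c_{n-1}$ via Theorem \ref{NormalDev'} (or Theorem \ref{NormalDev} in the lattice case), and on the ``small'' range $y=o(c_n)$ one substitutes $\mathbf{P}(S_{n-1}\in dy,\tau^->n-1)\approx g_{\alpha,\beta}(0)\,H(y)\,dy/\big((n-1)c_{n-1}\big)$ via Theorem \ref{SmallDev'} (or \ref{SmallDev}). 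Since $p_{\alpha,\beta}$ depends only on $(\alpha,\beta)$ and not on the law of $X$, one may work with whichever of the lattice/non-lattice versions is convenient. Matching regularly varying orders on the two sides then yields (\ref{C0}), an identity expressing $p_{\alpha,\beta}$ through $g_{\alpha,\beta}$ and the renewal function $H$.

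For the second stage I would analyse (\ref{C0}) as $z\downarrow 0$. The exponent $\alpha\rho$ is essentially forced: $x\mapsto x^{\alpha\rho}$ is the harmonic function of the stable process killed upon leaving $(0,\infty)$, and $\rho=\int_{0+}^{\infty}g_{\alpha,\beta}(u)\,du$ is exactly its positivity parameter, so $z^{\alpha\rho}$ is the correct scale of the limiting conditioned process's density near the boundary. Concretely, I would use that $H$ is regularly varying at infinity with index $\alpha\rho$ (the strictly ascending ladder height is attracted to a stable law of index $\alpha\rho$, whence the heavy-tailed renewal theorem), and that $g_{\alpha,\beta}$ is continuous and strictly positive at $0$; inserting the ansatz $p_{\alpha,\beta}(z)=z^{\alpha\rho}L(z)$ with $L$ slowly varying into (\ref{C0}) and invoking a Karamata/Tauberian argument (or a contraction argument on a suitable weighted space) one should obtain $L(z)\to C$ for some $C>0$, the value of $C$ dropping out of (\ref{C0}) as an explicit constant proportional to $g_{\alpha,\beta}(0)$.

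The main obstacle is the last step: upgrading ``$p_{\alpha,\beta}$ is regularly varying of index $\alpha\rho$ at $0$'' to the genuine equivalence $p_{\alpha,\beta}(z)\sim Cz^{\alpha\rho}$, i.e.\ showing the slowly varying factor $L$ has a limit rather than merely oscillating between positive bounds. This needs a monotonicity or smoothing feature of (\ref{C0}) --- so that, by the uniform convergence theorem for slowly varying functions, convergence on one scale propagates to all scales --- together with the compatibility of the slowly varying factors of $H$, of $\mathbf{P}(\tau^->n)$ and of $c_n=n^{1/\alpha}l_1(n)$. (Note that a naive comparison of Theorems \ref{NormalDev'} and \ref{SmallDev'} alone cannot do this, since on the overlap of their ranges of validity both sides are merely $o(1)$; the integral equation is indispensable.) A secondary, more routine, difficulty is the uniform control over $(0,\infty)$ of the error terms in passing from the discrete identity to (\ref{C0}), and in particular checking that the small-deviation contribution is negligible against the leading, normal-deviation, term.
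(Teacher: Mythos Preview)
Your dismissal of the ``naive comparison'' is the central error: that comparison is precisely the paper's proof, and it works. The point you miss is that the $o(1)$ in Theorem~\ref{NormalDev'} is \emph{uniform in $x$}; write the error as $\Phi_n\to 0$, independent of $x$. Fix any sequence $\varepsilon_m\downarrow 0$. For each $m$ choose $n=n(m)$ so large that (i) $\Phi_{n(m)}<\varepsilon_m^{\alpha\rho+1}$ and (ii) the regular-variation approximation $H(\varepsilon_m c_n)\sim\varepsilon_m^{\alpha\rho}H(c_n)$ is accurate. At $x=\varepsilon_m c_{n(m)}$, Theorem~\ref{NormalDev'} gives
\[
c_n\,\mathbf{P}(S_n\in[x,x+1)\,|\,\tau^->n)=p_{\alpha,\beta}(\varepsilon_m)+O(\varepsilon_m^{\alpha\rho+1}),
\]
while Theorem~\ref{SmallDev'} gives the same left-hand side as $g_{\alpha,\beta}(0)\,\varepsilon_m^{\alpha\rho}H(c_n)\big/\bigl(n\,\mathbf{P}(\tau^->n)\bigr)\cdot(1+o(1))$; since $H(c_n)\sim Cn\,\mathbf{P}(\tau^->n)$ by (\ref{AsH}), equating the two yields $p_{\alpha,\beta}(\varepsilon_m)\sim C'\varepsilon_m^{\alpha\rho}$ with $C'$ independent of the chosen sequence. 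So the ``both sides are merely $o(1)$'' objection dissolves once you let $n$ depend on $\varepsilon_m$: the additive error $\Phi_n$ can be driven below any prescribed power of $\varepsilon_m$. The integral equation is not used at all.

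Two further remarks on your proposal. First, your description of (\ref{C0}) is inaccurate: it does not involve $H$; it is a homogeneous linear equation with $p_{\alpha,\beta}$ on both sides and $g_{\alpha,\beta}$ in the kernel, and in the paper it falls out of the proof of Theorem~\ref{NormalDev'} (from the Spitzer-type identity (\ref{Eppel}) for $n\mathcal{B}_n$), not from the first-passage decomposition you wrote --- that decomposition is what the paper uses later for Theorem~\ref{LadEpoch}. Second, even granting (\ref{C0}) in its actual form, your Stage~2 has the gap you yourself flag: a homogeneous eigen-equation of that type provides no obvious mechanism forcing the slowly varying factor to converge, and you supply no monotonicity or contraction input to close it. The diagonal argument above sidesteps this difficulty entirely.
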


One of our main motivations to be interested in the local probabilities of
conditioned random walks is the question on the asymptotic behavior of the
local probabilities of the ladder epochs $\tau ^{-}$ and $\tau ^{+}$. Before
formulating the relevant results we recall some known facts concerning the
properties of these random variables given

\begin{equation*}
\sum_{n=1}^{\infty }\frac{1}{n}\mathbf{P}(S_{n}>0)=\sum_{n=1}^{\infty }\frac{%
1}{n}\mathbf{P}(S_{n}\leq 0)=\infty .
\end{equation*}%
The last means that $\{S_{n},n\geq 0\}$ is an oscillating random walk, and,
in particular, the stopping moments $\tau ^{-}$ and \ \ $\tau ^{+}$ are
well-defined proper random variables. Moreover, it follows from the
Wiener-Hopf factorization (see, for example, \cite[Theorem~8.9.1,~p.~376]%
{BGT}) that for all $z\in \lbrack 0,1)$,
\begin{equation}
1-\mathbf{E}z^{\tau ^{-}}=\exp \left\{ -\sum_{n=1}^{\infty }\frac{z^{n}}{n}%
\mathbf{P}(S_{n}\leq 0)\right\}  \label{weak}
\end{equation}%
and
\begin{equation}
1-\mathbf{E}z^{\tau ^{+}}=\exp \left\{ -\sum_{n=1}^{\infty }\frac{z^{n}}{n}%
\mathbf{P}(S_{n}>0)\right\} .  \label{strict}
\end{equation}%
Rogozin \cite{Rog71} investigated properties of $\tau ^{+}$ and demonstrated
that the Spitzer condition
\begin{equation}
n^{-1}\sum_{k=1}^{n}\mathbf{P}\left( S_{k}>0\right) \rightarrow \rho \in
\left( 0,1\right) \quad \text{as }n\rightarrow \infty  \label{Spit}
\end{equation}%
holds if and only if $\tau ^{+}$ belongs to the domain of attraction of a
positive stable law with parameter $\rho $. In particular, if $X\in \mathcal{D}%
(\alpha ,\beta )$ then (see,$\ $\ for instance, \cite{Zol57}) condition (\ref%
{Spit}) holds with
\begin{equation}
\displaystyle\rho =\int_{0+}^{\infty }g_{\alpha ,\beta }(u)du=\left\{
\begin{array}{ll}
\frac{1}{2},\ \alpha =1, &  \\
\frac{1}{2}+\frac{1}{\pi \alpha }\arctan \left( \beta \tan \frac{\pi \alpha
}{2}\right) ,\text{ otherwise}. &
\end{array}%
\right.  \label{ro}
\end{equation}

Since (\ref{weak}) and (\ref{strict}) imply
\begin{equation*}
(1-\mathbf{E}z^{\tau ^{+}})(1-\mathbf{E}z^{\tau ^{-}})=1-z\quad \text{for
all }z\in (0,1),
\end{equation*}%
one can deduce by Rogozin's result that (\ref{Spit}) holds if and only if
there exists a function $l(n)$ slowly varying at infinity such that, as $%
n\rightarrow \infty $,
\begin{equation}
\mathbf{P}\left( \tau ^{-}>n\right) \sim \frac{l(n)}{n^{1-\rho }},\ \
\mathbf{P}\left( \tau ^{+}>n\right) \sim \frac{1}{\Gamma (\rho )\Gamma
(1-\rho )n^{\rho }l(n)}.  \label{integ1}
\end{equation}%
We also would like to mention that, according to Doney \cite{Don95}, the
Spitzer condition is equivalent to
\begin{equation}
\mathbf{P}\left( S_{n}>0\right) \rightarrow \rho \in \left( 0,1\right) \quad
\text{as }n\rightarrow \infty .  \label{SpitDon}
\end{equation}%
Therefore, both relations in (\ref{integ1}) are valid under condition (\ref%
{SpitDon}).

The asymptotic representations (\ref{integ1}) include a slowly varying
function $l(x)$ which is of interest as well. Unfortunately, to get a more
detailed information about the asymptotic properties of $l(x)$ it is
necessary to impose additional hypotheses on the distribution of $X$. Thus,
Rogozin \cite{Rog71} has shown that $l(x)$ is asymptotically a constant if
and only if
\begin{equation}
\sum_{n=1}^{\infty }\frac{1}{n}\Bigl(\mathbf{P}\left( S_{n}>0\right) -\rho %
\Bigr)<\infty .  \label{Rog}
\end{equation}%
It follows from the Spitzer-R\'{o}sen theorem (see \cite[%
Theorem~8.9.23,~p.~382]{BGT}) that if $\mathbf{E}X^{2}<\infty $, then (\ref%
{Rog}) holds with $\rho =1/2$, and, consequently,
\begin{equation}
\mathbf{P}(\tau ^{\pm }>n)\sim \frac{C^{\pm }}{n^{1/2}}\quad \text{as }%
n\rightarrow \infty ,  \label{sym}
\end{equation}%
where $C^{\pm }$ are positive constants. Much less is known about the form
of $l(x)$ if $\mathbf{E}X^{2}=\infty .$ For instance, if the distribution of
$X$ is symmetric, then, clearly,
\begin{equation}
\left\vert \mathbf{P}\left( S_{n}>0\right) -\frac{1}{2}\right\vert =\frac{1}{%
2}\mathbf{P}\left( S_{n}=0\right) .  \label{simm}
\end{equation}%
Furthermore, according to \cite[Theorem III.9, p.~49]{Pet75}, there exists $%
C>0$ such that for all $n\geq 1$,
\begin{equation*}
\mathbf{P}\left( S_{n}=0\right) \leq \frac{C}{\sqrt{n}}.
\end{equation*}%
By this estimate and (\ref{simm}) we conclude that (\ref{Rog}) holds with $%
\rho =1/2$ and, therefore, (\ref{sym}) is valid for all symmetric random
walks.

One more situation was analyzed by Doney \cite{Don82a}. \ Assuming that $%
\mathbf{P}(X>x)=\left( x^{\alpha }l_{0}(x)\right) ^{-1},\ x>0,$ with $%
1<\alpha <2$ and $l_{0}(x)$ slowly varying at infinity, he established some
relationships between the asymptotic behavior of $l_{0}(x)$ and $l(x)$ at
infinity for a number of cases.

Thus, up to now \ there is a group of results describing the behavior of the
probabilities $\mathbf{P}(\tau ^{\pm }>n)$ as $n\rightarrow \infty $ and the
functions involved in their asymptotic representations. \ We complement the
mentioned statements by the following two theorems describing the behavior
of the local probabilities $\mathbf{P}(\tau ^{\pm }=n)$ as $n\rightarrow
\infty $. \

\begin{theorem}
\label{LadEpoch} If $X\in \mathcal{D}(\alpha ,\beta )$ then there exists a
sequence $\{Q_{n}^{-},n\geq 1\}$ such that
\begin{equation}
\mathbf{P}(\tau ^{-}=n)=Q_{n}^{-}\frac{l(n)}{n^{2-\rho }}(1+o(1))\text{ as }%
n\rightarrow \infty .  \label{LE}
\end{equation}%
The sequence $\{Q_{n}^{-},n\geq 1\}$ is bounded from above, and there exists
a positive constant $Q_{\ast }^{-}$ such that $Q_{n}^{-}\mathrm{I}%
(Q_{n}^{-}>0)\geq Q_{\ast }^{-}$ for all $n\geq 1$. \ Moreover, we may
choose $Q_{n}^{-}\equiv 1-\rho $ if and only if one of the following
conditions holds:

\begin{itemize}
\item[(a)] $\mathbf{E}(-S_{\tau^-})=\infty$,

\item[(b)] $\mathbf{E}(-S_{\tau ^{-}})<\infty $ and the distribution of $X$
is $(h,0)-$lattice,

\item[(c)] $\mathbf{E}(-S_{\tau^-})<\infty$ and the distribution of $X$ is
non-lattice.
\end{itemize}
\end{theorem}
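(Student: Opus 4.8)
The starting point is the one-step decomposition obtained by conditioning on $\mathcal{F}_{n-1}$: for $n\ge 2$,
\begin{equation*}
\mathbf{P}(\tau^-=n)=\int_{(0,\infty)}\mathbf{P}(X\le -y)\,\mathbf{P}\bigl(S_{n-1}\in dy,\ \tau^->n-1\bigr)=\mathbf{P}(\tau^->n-1)\,\mathbf{E}\bigl[\overline F(S_{n-1})\mid\tau^->n-1\bigr],
\end{equation*}
where $\overline F(y):=\mathbf{P}(X\le -y)$ is non-increasing. By (\ref{integ1}) everything reduces to the behaviour of $I_n:=\mathbf{E}[\overline F(S_{n-1})\mid\tau^->n-1]$: it suffices to prove $nI_n\to 1-\rho$ in the cases (a)--(c) and that $nI_n$ is bounded and, when positive, bounded away from zero in general. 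I would also record at the outset the elementary identity $\mathbf{E}(-S_{\tau^-})=\int_0^\infty\overline F(y)H(y)\,dy$ (and its lattice analogue), which follows by writing $-S_{\tau^-}=(-S_{\tau^- -1})+(-X_{\tau^-})$, using the renewal identity $\sum_{m\ge 0}\mathbf{P}(S_m\in dy,\ \tau^->m)=H(dy)$ on $(0,\infty)$, and integrating by parts; thus $\mathbf{E}(-S_{\tau^-})<\infty$ is \emph{equivalent} to finiteness of that integral (sum).

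The core is a split $(0,\infty)=(0,\delta_nc_n]\cup(\delta_nc_n,\infty)$ with $\delta_n\downarrow 0$ chosen slowly and compatibly with Theorems~\ref{SmallDev'}/\ref{SmallDev}. On $(0,\delta_nc_n]$ I insert those theorems, $\mathbf{P}(S_{n-1}\in dy,\ \tau^->n-1)\sim\frac{g_{\alpha,\beta}(0)}{c_{n-1}(n-1)}H(y)\,dy$ (with an extra factor $h$ and a sum in the lattice case), so this part of $\mathbf{P}(\tau^-=n)$ is asymptotic to $\frac{g_{\alpha,\beta}(0)}{c_{n-1}(n-1)}\int_0^{\delta_nc_n}\overline F(y)H(y)\,dy$. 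On $(\delta_nc_n,\infty)$ I use Theorem~\ref{Cru}: since $S_{n-1}/c_{n-1}$ converges in distribution under $\mathbf{P}(\cdot\mid\tau^->n-1)$ to the absolutely continuous $M_{\alpha,\beta}$, and since the choice of $c_n$ gives $n\,\mathbf{P}(X\le -\lambda c_n)\to q\lambda^{-\alpha}$ for each $\lambda>0$ (with $q\ge 0$ the left tail coefficient of $g_{\alpha,\beta}$, and $q>0$ precisely when $\alpha<2$, $\beta<1$), this part behaves like $\frac{q}{n}\,\mathbf{P}(\tau^->n-1)\,\mathbf{E}[M_{\alpha,\beta}^{-\alpha}]$, \emph{provided} $\mathbf{E}[M_{\alpha,\beta}^{-\alpha}]<\infty$. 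Here Theorem~\ref{Density} is decisive: $p_{\alpha,\beta}(z)\sim Cz^{\alpha\rho}$ as $z\downarrow 0$ gives $\mathbf{E}[M_{\alpha,\beta}^{-\alpha}]<\infty\iff\alpha(1-\rho)<1$, and from (\ref{ro}) one checks $\alpha(1-\rho)\le 1$ always, with equality exactly at $(\alpha,\beta)=(2,0)$ and at $1<\alpha<2$, $\beta=1$ --- i.e.\ exactly on the ``boundary'' where $\int_0^\infty\overline F(y)H(y)\,dy$ can be finite, i.e.\ where $\mathbf{E}(-S_{\tau^-})<\infty$ is possible. The same local bound from Theorem~\ref{SmallDev'} controls the conditional density of $S_{n-1}/c_{n-1}$ near $0$ by a constant times $z^{\alpha\rho}$ (uniform integrability for the passage to $\mathbf{E}[M_{\alpha,\beta}^{-\alpha}]$), while $\mathbf{P}(S_{n-1}\in[y,y+\Delta)\mid\tau^->n-1)\le C\,c_{n-1}^{-1}$ from Theorem~\ref{NormalDev'}/\ref{NormalDev} handles the very large values.

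Now the dichotomy runs the proof. If $\mathbf{E}(-S_{\tau^-})=\infty$ (this contains all of case (a)), then $\alpha(1-\rho)<1$ unless $\beta=1$; for $\beta<1$ the $(0,\delta_nc_n]$-part is $o\bigl(l(n)n^{-(2-\rho)}\bigr)$ and the $(\delta_nc_n,\infty)$-part dominates, so $\mathbf{P}(\tau^-=n)\sim q\,\mathbf{E}[M_{\alpha,\beta}^{-\alpha}]\,\mathbf{P}(\tau^->n-1)/n$; for $\beta=1$ one keeps instead the (slowly divergent) small-deviation integral. Either way $\mathbf{P}(\tau^-=n)\sim c_0R(n)$ with a \emph{constant} $c_0>0$ and $R$ regularly varying of index $-(2-\rho)$. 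If $\mathbf{E}(-S_{\tau^-})<\infty$ (forcing the boundary case) and $X$ is non-lattice (case (c)) or $(h,0)$-lattice (case (b)), the $(\delta_nc_n,\infty)$-part is negligible while the small-deviation part converges to $\frac{g_{\alpha,\beta}(0)}{c_{n-1}(n-1)}\int_0^\infty\overline F(y)H(y)\,dy$, respectively to $\frac{h\,g_{\alpha,\beta}(0)}{c_{n-1}(n-1)}\sum_{y\in h\mathbb{Z},\,y>0}\overline F(y)H(y)$, which is again $c_0R(n)$ with $R(n)\asymp 1/(c_nn)$ regularly varying of index $-(2-\rho)$; the point of $a=0$ is that the lattice sum is then a fixed number, independent of $n$. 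In each of (a)--(c) we thus get $\mathbf{P}(\tau^-=n)\sim c_0R(n)$ with $R$ regularly varying of index $-(2-\rho)<-1$; summing over $k>n$ and invoking Karamata gives $\mathbf{P}(\tau^->n)\sim\frac{c_0}{1-\rho}\,nR(n)$, and comparing with (\ref{integ1}) forces $Q_n^-\equiv 1-\rho$ (and as a by-product the identity $q\,\mathbf{E}[M_{\alpha,\beta}^{-\alpha}]=1-\rho$ when $\beta<1$).

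Finally the complementary case and the bounds. If $\mathbf{E}(-S_{\tau^-})<\infty$ and $X$ is $(h,a)$-lattice with $a\in(0,h)$, the small-deviation part equals $\frac{h\,g_{\alpha,\beta}(0)}{c_{n-1}(n-1)}\sum_{y>0,\ y\equiv a(n-1)\ (\mathrm{mod}\ h)}\overline F(y)H(y)\,(1+o(1))$; the summation coset $a(n-1)+h\mathbb{Z}$ moves with $n$ while $\overline F$ is a step function with jumps on the \emph{different} coset $-a+h\mathbb{Z}$, so the sum genuinely depends on the residue of $n$ and has several limit points --- hence $\mathbf{P}(\tau^-=n)n^{2-\rho}/l(n)$ does not converge and one cannot take $Q_n^-\equiv 1-\rho$ (the simple symmetric walk, $(2,1)$-lattice with $\mathbf{P}(\tau^-=n)=0$ for odd $n$, is the extreme illustration). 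For the bounds, $I_n\le 1$ gives $\mathbf{P}(\tau^-=n)\le\mathbf{P}(\tau^->n-1)$, and feeding the uniform-in-$n$ forms of Theorems~\ref{NormalDev'}--\ref{SmallDev} (with Potter bounds for $l$ and $l_1$) into the split improves this to $\mathbf{P}(\tau^-=n)\le C\,l(n)n^{-(2-\rho)}$, so $\{Q_n^-\}$ is bounded above; and whenever $\mathbf{P}(\tau^-=n)>0$ there is a path sitting at the minimal admissible positive level $y_0=O(1)$ at time $n-1$ and then jumping to $\{\cdot\le 0\}$, so retaining that single term and applying Theorem~\ref{SmallDev}/\ref{SmallDev'} gives $\mathbf{P}(\tau^-=n)\ge c_*/(c_nn)\ge Q_*^-\,l(n)n^{-(2-\rho)}$, which together with $Q_n^-=1-\rho>0$ in the cases (a)--(c) yields the claim $Q_n^-\ge Q_*^-$ whenever $Q_n^->0$. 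The step I expect to cost the most is making the split uniform and quantitatively tight across all regimes --- in particular on the intermediate scale $y\asymp c_n$, where neither the ``local near $0$'' estimate of Theorem~\ref{SmallDev'} nor the crude bound $\overline F(y)\le\overline F(\delta_nc_n)$ alone suffices, and where the uniform integrability of $(S_{n-1}/c_{n-1})^{-\alpha}$ (again via Theorem~\ref{Density}) must be secured --- together with the finicky bookkeeping of the slowly varying functions involved.
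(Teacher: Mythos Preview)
Your overall architecture matches the paper's: the one-step decomposition $\mathbf{P}(\tau^-=n)=\int_0^\infty\overline F(y)\,\mathbf{P}(S_{n-1}\in dy,\tau^->n-1)$, a split at scale $\delta_nc_n$, Theorems~\ref{SmallDev'}/\ref{SmallDev} on the small part, Theorem~\ref{Cru} together with $n\mathbf{P}(X\le-\lambda c_n)\to q\lambda^{-\alpha}$ on the large part, and Theorem~\ref{Density} to guarantee $\mathbf{E}M_{\alpha,\beta}^{-\alpha}<\infty$ off the boundary. Your treatment of $\beta<1$, of (b)--(c), and of the ``only if'' part (oscillation of the coset sum when $a\in(0,h)$) is correct and essentially identical to the paper's.

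There is, however, a real gap in case~(a) on the boundary $\{1<\alpha<2,\beta=1\}\cup\{\alpha=2,\beta=0\}$ (note that $\alpha(1-\rho)=1$ also at $\alpha=2$, not only for $\beta=1$). In that regime nothing is assumed about the left tail beyond $\overline F(\delta_nc_n)=o(n^{-1})$, so after inserting the small-deviation theorem you only obtain
\[
\mathbf{P}(\tau^-=n+1)=\frac{g_{\alpha,\beta}(0)+o(1)}{nc_n}\,\omega(n)+o\!\left(\frac{l(n)}{n^{2-\rho}}\right),
\qquad \omega(n)=\int_0^{\delta_nc_n}\overline F(y)H(y)\,dy\to\infty,
\]
and there is no reason for $\omega(n)$ to be slowly varying; in particular your assertion ``$\mathbf{P}(\tau^-=n)\sim c_0R(n)$ with $R$ regularly varying of index $-(2-\rho)$'' is unjustified here. ``Summing and invoking Karamata'' does not help: Karamata is the Abelian direction (from terms to sums), whereas you need the Tauberian direction (from the tail sum back to the terms), which requires an extra hypothesis. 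The paper supplies exactly this missing step via its Lemma~\ref{Ltec}: summing the display over $n\ge k$ and comparing with $\mathbf{P}(\tau^->k)\sim l(k)k^{-(1-\rho)}$ yields
\[
\sum_{n\ge k}\frac{\omega(n)}{nc_n}\sim\frac{1}{g_{\alpha,\beta}(0)}\,\frac{l(k)}{k^{1-\rho}},
\]
and then the \emph{monotonicity} of $\omega$ (the only structural fact available about it) lets Lemma~\ref{Ltec} ``differentiate'' to $\omega(n)/(nc_n)\sim(1-\rho)g_{\alpha,\beta}(0)^{-1}l(n)n^{-(2-\rho)}$, which is the desired $Q_n^-\equiv 1-\rho$. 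Without this monotone Tauberian step your argument in case~(a) on the boundary is incomplete.
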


\begin{remark}
The statement of the theorem includes the quantity $\mathbf{E}(-S_{\tau
^{-}})$, which depends on $\tau ^{-}$, \ a random variable being the
objective of the theorem. This is done only to simplify the form of the
theorem. In fact, Chow \cite{Chow} has shown that $\mathbf{E}(-S_{\tau
^{-}}) $ is finite if and only if
\begin{equation*}
\int_{0}^{\infty }\frac{x^{2}}{\int_{0}^{\infty }y\min \{x,y\}\mathbf{P}%
(X^{+}\in dy)}\mathbf{P}(X^{-}\in dx)<\infty ,
\end{equation*}%
where $X^{+}:=\max \{0,X\}$ and $X^{-}:=-\min \{0,X\}$.
\end{remark}

\begin{remark}
The simple random walk in which $P(X=\pm 1)=1/2$ is the most natural example
with $Q_{n}^{-}\neq 1-\rho $. Here $P(\tau ^{-}=2k+1)=0$ and, consequently, $%
Q_{2k+1}^{-}=0$ for all $k\geq 1$. On the other hand, $\lim_{k\rightarrow
\infty }Q_{2k}^{-}$ exists and is strictly positive. This result is in
complete agreement with Theorem~\ref{LadEpoch}: the step-distribution of the
simple random walk is $(2,1)$-lattice.
\end{remark}

For the stopping time $\tau ^{+}$ we have a similar statement:

\begin{theorem}
\label{LadEpoch'} If  $X\in \mathcal{D}(\alpha ,\beta )$ then there exists a
sequence $\{Q_{n}^{+},n\geq 1\}$ such that
\begin{equation}
\mathbf{P}(\tau ^{+}=n)=Q_{n}^{+}\frac{l(n)}{n^{1+\rho }}(1+o(1))\text{ \ as
}n\rightarrow \infty .  \label{LE'}
\end{equation}%
The sequence $\{Q_{n}^{+},n\geq 1\}$ is bounded from above, and there exists
a positive constant $Q_{\ast }^{+}$ such that $Q_{n}^{+}\mathrm{I}%
(Q_{n}^{+}>0)\geq Q_{\ast }^{+}$ for all $n\geq 1$. Moreover, we may choose $%
Q_{n}^{+}\equiv \rho /(\Gamma (\rho )\Gamma (1-\rho ))$ if and only if one
of the following conditions holds:

\begin{itemize}
\item[(a)] $\mathbf{E}S_{\tau^+}=\infty$,

\item[(b)] $\mathbf{E}S_{\tau ^{+}}<\infty $ and the distribution of $X$ is $%
(h,0)-$lattice,

\item[(c)] $\mathbf{E}S_{\tau ^{+}}<\infty $ and the distribution of $X$ is
non-lattice.
\end{itemize}
\end{theorem}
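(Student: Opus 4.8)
The plan is to run the argument of Theorem~\ref{LadEpoch} for the reflected walk $\{-S_n\}$. Note that $-X\in\mathcal D(\alpha,-\beta)$, that by (\ref{SpitDon}) its associated parameter equals $1-\rho$, and that $\tau^+$ is the first time $\{-S_n\}$ becomes strictly negative, so $\{\tau^+>n\}=\{-S_1\ge0,\dots,-S_n\ge0\}$ is the event that $\{-S_k\}_{k\le n}$ stays in $[0,\infty)$; under reflection the trichotomy (a)--(c) is exactly the image of the one for $\tau^-$, with $\mathbf E S_{\tau^+}$ playing the role of $\mathbf E(-S_{\tau^-})$ and the same lattice/non-lattice dichotomy ($-X$ is non-lattice, resp. $(h,0)$-lattice, resp. $(h,a)$-lattice with $a\ne0$, precisely when $X$ is). The concrete starting point is the identity obtained by conditioning on $\mathcal F_{n-1}=\sigma(X_1,\dots,X_{n-1})$: since $\{\tau^+>n-1\}\in\mathcal F_{n-1}$ and $X_n$ is independent of $\mathcal F_{n-1}$,
\begin{equation*}
\mathbf P(\tau^+=n)=\mathbf E\bigl[\,\overline F(-S_{n-1})\,;\,\tau^+>n-1\,\bigr],\qquad \overline F(t):=\mathbf P(X>t),
\end{equation*}
with $\overline F(-S_{n-1})$ replaced by $\mathbf P(X\ge y_n)$, $y_n$ the least point of $a+h\mathbb Z$ exceeding $-S_{n-1}$, in the $(h,a)$-lattice case. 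Dividing by $\mathbf P(\tau^+>n-1)$ and inserting the second relation of (\ref{integ1}), the theorem reduces to showing that $R_n:=\mathbf E[\overline F(-S_{n-1})\mid\tau^+>n-1]$ satisfies $R_n=Q_n^+\,\Gamma(\rho)\Gamma(1-\rho)\,l^2(n)/n\,(1+o(1))$, uniformly enough to read off the two-sided bounds on $Q_n^+$.

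Next I would transfer the conditioned local limit theorems to $\{-S_n\}$. Durrett's theorem (Theorem~\ref{Cru}) applied to $-X$ gives $-S_{n-1}/c_{n-1}\Rightarrow M_{\alpha,-\beta}$ under $\mathbf P(\,\cdot\mid\tau^+>n-1)$ (the sequence $c_n$ is unchanged under $X\mapsto-X$ since $\mu$ depends only on $|X|$). Likewise the appropriate versions of Theorems~\ref{NormalDev'}--\ref{SmallDev} for $-X$ describe $\mathbf P(-S_{n-1}\in dy\mid\tau^+>n-1)$: on the normal range $y\asymp c_n$ it equals $(1+o(1))\,c_n^{-1}p_{\alpha,-\beta}(y/c_n)\,dy$, and on the small range $y=o(c_n)$ it is governed by $\widehat H(y)$, the renewal function of the ascending ladder heights of $\{-S_n\}$ (equivalently of the descending ladder heights of $\{S_n\}$). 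In the non-lattice case the weak/strict distinction for the ladder epoch of $\{-S_n\}$ is immaterial; in the lattice case it only affects which renewal function appears, matching the passage from $\tau^-$ to $\tau^+$.

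I would then split $R_n$ according to $-S_{n-1}\in(0,\varepsilon c_n]$, $(\varepsilon c_n,Kc_n]$ and $(Kc_n,\infty)$. On the middle range, Theorems~\ref{NormalDev'}/\ref{NormalDev} turn the truncated expectation into a Riemann sum which, using the regular variation of $\overline F$ (for $\alpha<2$ with a non-degenerate right tail one has $\overline F(t)\sim\kappa\mu(t)$ and $\mu(c_n)\sim1/n$; when the right tail is $o(\mu)$, e.g. $\alpha=2$ or $\beta=-1$, this range is subdominant and the analysis simplifies), converges to a constant multiple of $n^{-1}\int_\varepsilon^{K}v^{-\alpha}p_{\alpha,-\beta}(v)\,dv$. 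On the tail range, $\mathbf P(-S_{n-1}>Kc_n,\tau^+>n-1)\le\mathbf P(-S_{n-1}>Kc_n)$ together with regular variation makes the contribution $o(1)$ of the middle one as $K\to\infty$. On the small range, Theorems~\ref{SmallDev'}/\ref{SmallDev} produce a term proportional to $\bigl(c_n n\,\mathbf P(\tau^+>n)\bigr)^{-1}\int_0^{\varepsilon c_n}\overline F(y)\widehat H(y)\,dy$; here the trichotomy enters, because $\int_0^\infty\overline F(y)\widehat H(y)\,dy$ is (a constant multiple of $\mathbf E S_{\tau^+}$, hence) finite if and only if $\mathbf E S_{\tau^+}<\infty$. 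When $\mathbf E S_{\tau^+}=\infty$ the small-range term is of the same order as the middle one and the two combine; after inserting the behaviour of $p_{\alpha,-\beta}$ near zero (Theorem~\ref{Density}), the identity $g_{\alpha,-\beta}(0)=g_{\alpha,\beta}(0)$, the integral equation (\ref{C0}) for $p_{\alpha,\beta}$, and the identities linking $l$, $\mu$ and $c_n$, the answer collapses to $Q_n^+\equiv\rho/(\Gamma(\rho)\Gamma(1-\rho))$. When $\mathbf E S_{\tau^+}<\infty$ the small-range term dominates, its value being a fixed multiple of $\int_0^\infty\overline F(y)\widehat H(y)\,dy$ in the non-lattice case and of the corresponding lattice sum $\sum_y\overline F(y)\widehat H(y)$ otherwise; for $a=0$ the latter is again asymptotically constant, while for $a\ne0$ it depends on the residue of $an\bmod h$, which is the source of the oscillating $Q_n^+$. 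Finally the uniform upper bound on $Q_n^+$ follows from $\overline F\le1$, and the lower bound $Q_n^+\mathbf 1\{Q_n^+>0\}\ge Q_*^+>0$ from the guaranteed contribution of the least admissible positive value to the small-deviation term.

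\textbf{Main obstacle.} The hard part will be the small-deviation analysis in the lattice case: isolating the ``last step across the boundary'' so as to see that $\tau^+=n$ forces $S_n\in(0,\infty)\cap(an+h\mathbb Z)$, pairing the moving least admissible value with the renewal mass $\widehat H$ at that point, and proving that the resulting sequence $Q_n^+$ genuinely fails to converge exactly when $a\ne0$ and $\mathbf E S_{\tau^+}<\infty$, while converging to $\rho/(\Gamma(\rho)\Gamma(1-\rho))$ in the three remaining cases. A secondary difficulty is the bookkeeping with slowly varying functions — establishing the relations between $l$, the ladder-height renewal function, $\mu$ and $c_n$ — needed to see that the three ranges assemble into the stated form $Q_n^+\,l(n)/n^{1+\rho}$ with the same $l$ as in (\ref{integ1}).
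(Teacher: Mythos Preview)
Your approach is correct and is exactly what the paper intends. The paper gives no separate proof of Theorem~\ref{LadEpoch'}; it proves Theorem~\ref{LadEpoch} in detail and leaves Theorem~\ref{LadEpoch'} to be obtained by passing to the reflected walk $-S_n$ (so that $-X\in\mathcal D(\alpha,-\beta)$, $\rho$ is replaced by $1-\rho$, $\tau^+$ plays the role of the first descending ladder epoch, and $\mathbf E S_{\tau^+}$ replaces $\mathbf E(-S_{\tau^-})$), with the weak/strict distinction handled as you indicate. Your decomposition into small, middle, and tail ranges mirrors precisely the split $J_1+J_2$ in Section~5.1 and the small-deviation analysis via (\ref{A3})--(\ref{A89}) in Section~5.2, and your identification of the lattice residue $\{an\}$ as the source of oscillation in case $a\neq 0$ is the same mechanism as in the paper's ``only if'' argument.

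One caution on bookkeeping: when you write that the theorem reduces to $R_n=Q_n^+\,\Gamma(\rho)\Gamma(1-\rho)\,l^2(n)/n\,(1+o(1))$, notice that the reflection argument, combined with (\ref{integ1}), actually delivers $\mathbf P(\tau^+=n)\sim \rho/\bigl(\Gamma(\rho)\Gamma(1-\rho)\,l(n)\,n^{1+\rho}\bigr)$; comparing with the displayed form (\ref{LE'}) shows that the $l(n)$ there should be read as $1/l(n)$ (a known slip in the statement). Your method is unaffected, but be careful not to force the $l^2(n)$ factor when assembling the pieces---simply record the asymptotics the argument produces and identify $Q_n^+$ accordingly.
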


In some special cases the asymptotic behavior of $\mathbf{P}\left( \tau
^{\pm }=n\right) $ is already known from the literature. Eppel \cite{Epp}
proved that if $\mathbf{E}X=0$, $\mathbf{E}X^{2}$ is finite, and the
distribution of $X$ is non-lattice, then
\begin{equation}
\mathbf{P}\left( \tau ^{\pm }=n\right) \sim \frac{C^{\pm }}{n^{3/2}}\text{ \
as }n\rightarrow \infty .  \label{integ2}
\end{equation}%
Clearly, $X\in \mathcal{D}(2,0)$ in this case. For aperiodic random walks on
integers with $EX=0$ and $EX^{2}<\infty $ representations (\ref{integ2})
were obtained by Alili and Doney \cite{AD99}.

Asymptotic relation (\ref{integ2}) is valid for all continuous symmetric
(implying $\rho =1/2$ in (\ref{SpitDon})) random walks (see \cite[Chapter
XII, Section 7]{FE}). Note that the restriction $X\in \mathcal{D}(\alpha
,\beta )$ is superfluous in this situation.

Recently A.Borovkov \cite{Bor04} has shown that if (\ref{Spit}) is valid and
\begin{equation}
n^{1-\rho }\Bigl(\mathbf{P}(S_{n}>0)-\rho \Bigr)\rightarrow const\in
(-\infty ,\infty )\quad \text{as }n\rightarrow \infty ,  \label{BorCond}
\end{equation}%
then (\ref{LE}) holds with $l(n)\equiv const\in (0,\infty )$. Proving the
mentioned result Borovkov does not assume that the distribution of $X$ is
taken from the domain of attraction of a stable law. However, he gives no
explanations how one can check the validity of (\ref{BorCond}) in the
general situation.

Further, Alili and Doney \cite[Remark~1,~p.~98]{AD99} have demonstrated that
if $X$ is $(h,0)-$ lattice and $\mathbf{E}S_{\tau ^{+}}<\infty $ then (\ref%
{LE'}) holds with $Q_{n}^{+}\sim \rho /(\Gamma (\rho )\Gamma (1-\rho ))$.

Finally, Mogulski and Rogozin \cite{MR05} established (\ref{LE}) for $X$
satisfying the conditions $\mathbf{E}X=0$ and $\mathbf{E}|X|^{3}<\infty .$
Moreover, they proved that $Q_{n}^{+}\sim const$ if and only if the
distribution of $X$ is either non-lattice or $(h,0)-$ lattice. Observe that $%
\mathbf{E}(-S_{\tau ^{-}})<\infty $ under their conditions.

\section{Auxiliary results}

\subsection{Notation}

In what follows we denote by $C,C_{1},C_{2},...$ finite positive constants
which may be \textit{different} from formula to formula and by $%
l(x),l_{0}(x),l_{1}(x),l_{2}(x),...$ functions slowly varying at infinity
which are, as a rule, \textit{fixed} \textit{once and forever}.

It is known that if $X\in \mathcal{D}\left( \alpha ,\beta \right) $ with $%
\alpha \in (0,2),$ and $F(x):=\mathbf{P}\left( X<x\right) $, then
\begin{equation}
1-F(x)+F(-x)\sim \frac{1}{x^{\alpha }l_{0}(x)}\quad \text{as }x\rightarrow
\infty ,  \label{Tailtwo}
\end{equation}%
where $l_{0}(x)$ is a function slowly varying at infinity. Besides, for $%
\alpha \in (0,2)$,%
\begin{equation}
\frac{F(-x)}{1-F(x)+F(-x)}\rightarrow q,\quad \frac{1-F(x)}{1-F(x)+F(-x)}%
\rightarrow p\quad \text{as }x\rightarrow \infty ,  \label{tailF}
\end{equation}%
with $p+q=1$ and $\beta =p-q$ in (\ref{std}). It is easy to see that (\ref%
{Tailtwo}) implies
\begin{equation}
\mu (u)\sim \frac{\alpha }{2-\alpha }\mathbf{P}(|X|>u)\quad \text{as }%
u\rightarrow \infty .  \label{UU}
\end{equation}%
By this relation and the definition of $c_{n}$ we deduce
\begin{equation}
\mathbf{P}(|X|>c_{n})\sim \frac{2-\alpha }{\alpha }\frac{1}{n}\quad \text{as
}n\rightarrow \infty .  \label{tailF'}
\end{equation}%
%
%
%
%
%
%
%
%
%
%
%
%
%
%

\subsection{Some results from the fluctuation theory}

Now we formulate a number of statements concerning the distributions of the
random variables $\tau ^{-},\tau ^{+}$ and $\chi ^{+}$. $\ $\ Recall that a
random variable $\zeta $ is called relatively stable if there exists a
non-random sequence $d_{n}\rightarrow \infty $ as $n\rightarrow \infty $
such that
\begin{equation*}
\frac{1}{d_{n}}\sum_{k=1}^{n}\zeta _{k}\overset{p}{\rightarrow }1\text{ as }%
n\rightarrow \infty ,
\end{equation*}%
where $\zeta _{k}\overset{d}{=}\zeta ,\ k=1,2,...,$ and are independent.

\begin{lemma}
\label{Lrenewal}$\mathrm{(}$see \cite[Theorem~9]{Rog71}$%
\mathrm{)}$ Assume $X\in \mathcal{D}(\alpha ,\beta )$. Then, as~$%
x~\rightarrow~\infty $,%
\begin{equation}
\mathbf{P}\left( \chi ^{+}>x\right) \sim \frac{1}{x^{\alpha \rho }l_{2}(x)}\
\,\text{if \thinspace }\alpha \rho <1,  \label{asymXi}
\end{equation}%
and $\chi ^{+}$ is relatively stable if $\alpha \rho =1.$
\end{lemma}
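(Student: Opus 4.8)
The plan is to deduce both assertions from the Wiener--Hopf factorization of the ascending ladder structure combined with Karamata's Tauberian theorem. The bivariate refinement of (\ref{strict}) states that for $z\in[0,1)$ and $\lambda\ge0$
\[
1-\mathbf{E}\bigl[z^{\tau^{+}}e^{-\lambda\chi^{+}}\bigr]
=\exp\Bigl\{-\sum_{n=1}^{\infty}\frac{z^{n}}{n}\,\mathbf{E}\bigl[e^{-\lambda S_{n}};\,S_{n}>0\bigr]\Bigr\},
\]
and letting $z\uparrow1$ --- legitimate since $\tau^{+}$ is proper and, for $\lambda>0$, the left-hand side lies in $(0,1)$ --- gives $1-\mathbf{E}e^{-\lambda\chi^{+}}=\exp\{-\Sigma(\lambda)\}$ with $\Sigma(\lambda):=\sum_{n\ge1}n^{-1}\mathbf{E}[e^{-\lambda S_{n}};S_{n}>0]$. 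Since $1-\mathbf{E}e^{-\lambda\chi^{+}}=\lambda\int_{0}^{\infty}e^{-\lambda x}\mathbf{P}(\chi^{+}>x)\,dx$, it suffices to prove that $h(\lambda):=1-\mathbf{E}e^{-\lambda\chi^{+}}$ is regularly varying at $0$ of index $\alpha\rho$: if $\alpha\rho<1$ then Karamata's theorem together with the monotone density theorem yields $\mathbf{P}(\chi^{+}>x)\sim\ell^{*}(x)/\bigl(\Gamma(1-\alpha\rho)x^{\alpha\rho}\bigr)$, which is (\ref{asymXi}) with $l_{2}:=\Gamma(1-\alpha\rho)/\ell^{*}$; and if $\alpha\rho=1$ then the index-$0$ version of the theorem says precisely that $\int_{0}^{x}\mathbf{P}(\chi^{+}>u)\,du$ is slowly varying, which is the classical criterion for $\chi^{+}$ to be relatively stable.

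To prove that $h$ is regularly varying of index $\alpha\rho$ it is enough to verify $\Sigma(\lambda)-\Sigma(c\lambda)\to\alpha\rho\log c$ for each fixed $c\in(0,1)$. Here
\[
\Sigma(\lambda)-\Sigma(c\lambda)=\sum_{n\ge1}\frac1n\,\mathbf{E}\bigl[(e^{-\lambda S_{n}}-e^{-c\lambda S_{n}});\,S_{n}>0\bigr],
\]
and the advantage of passing to this difference is that the ``divergent'' part $\sum n^{-1}\mathbf{P}(S_{n}>0)$ cancels. Now use that $S_{n}/c_{n}$ converges in distribution to $Y_{\alpha,\beta}$, the random variable with density $g_{\alpha,\beta}$: whenever $\lambda c_{n}\to\theta$ one has $\mathbf{E}[e^{-\lambda S_{n}};S_{n}>0]\to\psi(\theta):=\int_{0+}^{\infty}e^{-\theta u}g_{\alpha,\beta}(u)\,du$, with $\psi(0+)=\rho$ by (\ref{ro}) and $\psi(\theta)\to0$ as $\theta\to\infty$; moreover, since the functions $u\mapsto e^{-\theta u}\mathrm{I}\{u>0\}$ are uniformly bounded and of uniformly bounded variation and $Y_{\alpha,\beta}$ has a continuous distribution function, this convergence is uniform in $\theta$. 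Splitting the sum according to whether $\lambda c_{n}<\delta$, $\delta\le\lambda c_{n}\le M$, or $\lambda c_{n}>M$, and using that $\{c_{n}\}$ is regularly varying of index $1/\alpha$ by (\ref{asyma}) --- so that $\sum_{n}n^{-1}f(\lambda c_{n})\to\alpha\int_{0}^{\infty}f(u)\,du/u$ as $\lambda\to0$ for well-behaved $f$ --- the middle range contributes $\alpha\int_{\delta}^{M}\bigl(\psi(u)-\psi(cu)\bigr)\,du/u$, which tends, as $\delta\to0$ and $M\to\infty$, to $\alpha\int_{0}^{\infty}\bigl(\psi(u)-\psi(cu)\bigr)\,du/u=\alpha\bigl(\psi(0+)-\psi(\infty)\bigr)\log c=\alpha\rho\log c$ by Frullani's formula, while the ranges $\lambda c_{n}<\delta$ and $\lambda c_{n}>M$ contribute $O(\delta^{\min(\alpha,1)})$ and $O(1/M)$ respectively, uniformly in $\lambda$. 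Hence $\Sigma(\lambda)-\Sigma(c\lambda)\to\alpha\rho\log c$, $h$ is regularly varying of index $\alpha\rho$, and the lemma follows from the first paragraph. (When $\alpha\rho=1$ --- which by (\ref{ro}) can occur only for $\alpha=2$ or for $1<\alpha<2,\ \beta=-1$ --- the same computation gives $h(\lambda)\sim\lambda\,\ell^{*}(1/\lambda)$ with $\ell^{*}(1/\lambda)\to\mathbf{E}\chi^{+}\in(0,\infty]$, i.e.\ $\int_{0}^{x}\mathbf{P}(\chi^{+}>u)\,du$ slowly varying, which is again relative stability.)

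The step I expect to be the main obstacle is the control of these three error ranges, especially the small-deviation range $\lambda c_{n}>M$, where the weak-convergence estimate is of no use and $\mathbf{E}[e^{-\lambda S_{n}};S_{n}>0]$ must be bounded directly. The natural tool is a concentration (Esseen-type) inequality $\sup_{y}\mathbf{P}(S_{n}\in(y,y+t])\le C(t+1)/c_{n}$ (see, e.g., \cite{Pet75}), which gives $\mathbf{E}[e^{-\lambda S_{n}};S_{n}>0]\le C(1/\lambda+1)/c_{n}$; summing against $n^{-1}$ over $\{n:\lambda c_{n}>M\}$ --- a set of the form $\{n\ge N\}$ with $c_{N}$ of order $M/\lambda$ --- and using the Karamata estimate $\sum_{n\ge N}(nc_{n})^{-1}\asymp c_{N}^{-1}$ shows that this range contributes $O(1/M)$ uniformly. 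The range $\lambda c_{n}<\delta$ is handled by the elementary bound $\sum_{n:\,\lambda c_{n}<\delta}n^{-1}\mathbf{E}[(1-e^{-\lambda S_{n}});S_{n}>0]=O(\delta^{\min(\alpha,1)})$, coming from $1-e^{-\lambda s}\le\min(1,\lambda s)$ and the stable tail behaviour of $S_{n}$ on scales up to $c_{n}$; note that each fixed term there also tends to $0$ as $\lambda\to0$, so no divergence survives from the small indices. In the middle range the only delicate point is that the uniform error $\varepsilon_{n}:=\sup_{\lambda}|\mathbf{E}[e^{-\lambda S_{n}};S_{n}>0]-\psi(\lambda c_{n})|$ need not be summable against $n^{-1}$ over all $n$; it is harmless here because the relevant indices $n$ are of order $\lambda^{-\alpha}$ (hence large, so $\varepsilon_{n}$ is small) and their number on the logarithmic scale stays bounded (it tends to $\alpha\log(M/\delta)$), whence $\sum_{n:\,\delta\le\lambda c_{n}\le M}n^{-1}\varepsilon_{n}\to0$.
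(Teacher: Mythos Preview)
The paper does not prove this lemma; it simply quotes the result from Rogozin \cite[Theorem~9]{Rog71}, and later, in the proof of Lemma~\ref{Renew2}, invokes the closely related bivariate domain-of-attraction statement from \cite{GOT82}. Your proposal is therefore not competing with a proof in the paper but is an independent reconstruction --- and it is essentially Rogozin's own route: the Spitzer--Baxter identity for the pair $(\tau^{+},\chi^{+})$, analysis of the Laplace exponent $\Sigma(\lambda)=\sum_{n}n^{-1}\mathbf{E}[e^{-\lambda S_{n}};S_{n}>0]$ via the stable limit of $S_{n}/c_{n}$, Frullani's integral to identify the index, and then Karamata's Tauberian theorem (plus the monotone density theorem when $\alpha\rho<1$, and the truncated-mean criterion for relative stability when $\alpha\rho=1$). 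The argument is correct.

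Two remarks on the error terms you flag. Your justification of the uniform convergence $\sup_{\theta}\bigl|\mathbf{E}[e^{-\theta S_{n}/c_{n}};S_{n}>0]-\psi(\theta)\bigr|\to0$ via ``uniformly bounded variation'' is in fact perfectly valid and deserves to be stated more confidently: by P\'olya's theorem the distribution functions satisfy $\sup_{y}|F_{n}(y)-F(y)|\to0$ (the limit law being continuous), and integration by parts gives $\bigl|\int g\,d(F_{n}-F)\bigr|\le\sup_{y}|F_{n}(y)-F(y)|$ times the total variation of $g$, which for $g(y)=e^{-\theta y}\mathrm{I}\{y>0\}$ is at most $2$ uniformly in $\theta\ge0$. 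For the range $\lambda c_{n}<\delta$, the cleanest way to obtain your bound is $\min(1,x)\le x^{\gamma}$ for $0<\gamma\le1$ together with $\sup_{n}\mathbf{E}[(S_{n}^{+}/c_{n})^{\gamma}]<\infty$ for each $\gamma<\alpha$; this yields $O(\delta^{\gamma})$ for any $\gamma<\min(\alpha,1)$, which is slightly weaker than your stated $O(\delta^{\min(\alpha,1)})$ but entirely sufficient.
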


\begin{lemma}
\label{Renew2} Suppose $X\in \mathcal{D}(\alpha ,\beta )$. Then, as $%
x\rightarrow \infty $,%
\begin{equation}
H(x)\sim \frac{x^{\alpha \rho }l_{2}(x)}{\Gamma (1-\alpha \rho )\Gamma
(1+\alpha \rho )}  \label{RenStand}
\end{equation}%
if $\alpha \rho <1$, and
\begin{equation}
H(x)\sim xl_{3}(x)  \label{RenewRelat}
\end{equation}%
if $\alpha \rho =1,$ where%
\begin{equation*}
l_{3}(x):=\left( \int_{0}^{x}\mathbf{P}\left( \chi ^{+}>y\right) dy\right)
^{-1},\text{ \ }x>0.
\end{equation*}%
In addition, there exists a constant $C>0$ such that, in both cases
\begin{equation}
H(c_{n})\sim Cn\mathbf{P}(\tau ^{-}>n)\quad \text{as }n\rightarrow \infty .
\label{AsH}
\end{equation}
\end{lemma}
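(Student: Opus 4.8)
\textbf{Proof plan for Lemma~\ref{Renew2}.}
The plan is to treat the two regimes $\alpha\rho<1$ and $\alpha\rho=1$ separately, in each case passing from the tail asymptotics of $\chi^{+}$ supplied by Lemma~\ref{Lrenewal} to the asymptotics of the renewal function $H$ via standard renewal theory, and then to deduce \eqref{AsH} by comparison with the known asymptotics \eqref{integ1} of $\mathbf{P}(\tau^{-}>n)$. First, suppose $\alpha\rho<1$. By \eqref{asymXi} the ladder-height variable $\chi^{+}$ has a regularly varying tail of index $-\alpha\rho\in(-1,0)$, so it lies in the domain of attraction of a one-sided stable law of index $\alpha\rho$. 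The renewal function $H$ defined in \eqref{ren} is precisely the renewal function of the i.i.d.\ sequence $\chi_1^{+},\chi_2^{+},\dots$, and the classical strong renewal theorem for regularly varying tails of index in $(0,1)$ (the Garsia--Lamperti / Erickson theorem, or equivalently Karamata-type Tauberian arguments applied to the renewal equation) gives
\begin{equation*}
H(x)\sim\frac{1}{\Gamma(1-\alpha\rho)\Gamma(1+\alpha\rho)}\,\frac{1}{\mathbf{P}(\chi^{+}>x)}\sim\frac{x^{\alpha\rho}l_2(x)}{\Gamma(1-\alpha\rho)\Gamma(1+\alpha\rho)},
\end{equation*}
which is \eqref{RenStand}. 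Here one should be slightly careful that in the non-lattice case the strong renewal theorem applies directly, while in the lattice case one works with the lattice version; in either case the conclusion for $H(x)$ as $x\to\infty$ is the same.

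Next, suppose $\alpha\rho=1$. Then Lemma~\ref{Lrenewal} asserts that $\chi^{+}$ is relatively stable, i.e.\ there is a nondecreasing sequence $d_n\to\infty$ with $d_n^{-1}\sum_{k\le n}\chi_k^{+}\overset{p}{\to}1$; equivalently the truncated mean $m(x):=\int_0^x\mathbf{P}(\chi^{+}>y)\,dy$ is slowly varying and $x\mathbf{P}(\chi^{+}>x)/m(x)\to0$. In this relatively stable regime the renewal function satisfies $H(x)\sim x/m(x)=xl_3(x)$ with $l_3(x)=\bigl(\int_0^x\mathbf{P}(\chi^{+}>y)\,dy\bigr)^{-1}$; this is the content of the weak law of large numbers for renewal processes driven by relatively stable summands, and it gives \eqref{RenewRelat}. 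I would cite the relevant renewal-theoretic results (e.g.\ Bingham--Goldie--Teugels, or the papers on relatively stable renewal processes) rather than reprove them.

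Finally, to obtain \eqref{AsH} I would combine the above with \eqref{integ1}. Write $\alpha\rho<1$ first: then $H(c_n)\sim c_n^{\alpha\rho}l_2(c_n)/(\Gamma(1-\alpha\rho)\Gamma(1+\alpha\rho))$, and since $c_n=n^{1/\alpha}l_1(n)$ we get $c_n^{\alpha\rho}=n^{\rho}l_1(n)^{\alpha\rho}$, so $H(c_n)$ is of order $n^{\rho}$ times a slowly varying function. On the other hand \eqref{integ1} gives $n\mathbf{P}(\tau^{-}>n)\sim l(n)n^{\rho}$, also $n^\rho$ times a slowly varying function. Thus both sides of \eqref{AsH} are regularly varying of the same index $\rho$, and the ratio $H(c_n)/(n\mathbf{P}(\tau^{-}>n))$ is slowly varying; the point is to show this ratio actually converges to a positive constant. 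The cleanest way is to recall the Wiener--Hopf/fluctuation identity relating the ladder-height renewal function $H$ to the ladder-epoch tail: indeed $\sum_{n\ge0}\mathbf{P}(S_n\in dy,\,\tau^{-}>n)$ has $H$ as (essentially) its ``marginal mass'', and more precisely one has an identity of the form $\sum_{n=0}^{N}\mathbf{P}(\tau^{-}>n)\cdot(\text{something})\asymp H(c_N)$ coming from the duality between strict ascending ladder heights and weak descending ladder epochs. Concretely, from $\mathbf{E}z^{\tau^{+}}$ and the known relation $\mathbf{P}(\tau^+>n)\sim 1/(\Gamma(\rho)\Gamma(1-\rho)n^\rho l(n))$ together with $\mathbf{P}(\chi^{+}>x)\sim x^{-\alpha\rho}/l_2(x)$, a Tauberian comparison pins down $l_2$ in terms of $l$ and $l_1$, yielding the constant $C$. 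The relatively stable case $\alpha\rho=1$ is handled the same way, now using $H(x)\sim xl_3(x)$, $c_n=n^{1/\alpha}l_1(n)=nl_1(n)$ (since $\alpha=1/\rho$; note $\alpha\rho=1$ forces $\alpha\le 2$, and when $\alpha<2$ one has $\alpha=1/\rho$), and again matching slowly varying parts. The main obstacle is precisely this last step: verifying that the slowly varying functions $l_2$ (resp.\ $l_3$), $l_1$, and $l$ combine so that the ratio in \eqref{AsH} tends to a \emph{constant} rather than merely to a slowly varying quantity. This requires invoking the explicit Wiener--Hopf factorization identities \eqref{weak}--\eqref{strict} and a careful Karamata Tauberian argument linking the generating functions of $\tau^{\pm}$ to the renewal function of $\chi^{+}$; the regular-variation bookkeeping is routine once that identity is in hand.
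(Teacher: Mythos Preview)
Your treatment of \eqref{RenStand} and \eqref{RenewRelat} matches the paper's: both are handled by direct citation to standard renewal results (the paper cites Feller, Ch.~XIV, formula~(3.4) for $\alpha\rho<1$ and Rogozin's Theorem~2 for $\alpha\rho=1$), exactly as you propose.

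For \eqref{AsH}, however, there is a real gap. You correctly isolate the obstacle---both $H(c_n)$ and $n\mathbf{P}(\tau^{-}>n)$ are regularly varying of index $\rho$, so their ratio is slowly varying, and the content of \eqref{AsH} is that this ratio actually \emph{converges}. But your proposed route via ``Wiener--Hopf factorization identities \eqref{weak}--\eqref{strict} and a careful Karamata Tauberian argument'' does not close it: those identities relate $\tau^{+}$ to $\tau^{-}$ and carry no spatial information about $\chi^{+}$. To pin down $l_2$ (or $l_3$) in terms of $l$ and $l_1$ you need a relation tying the \emph{spatial} ladder variable $\chi^{+}$ to the \emph{temporal} scale $c_n$ of the walk itself, and nothing in your sketch supplies it.

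The paper resolves this through the bivariate ladder structure. One introduces normalizing sequences $a_n,b_n$ by $\mathbf{P}(\tau^{+}>a_n)\sim 1/n$ and $\mathbf{P}(\chi^{+}>b_n)\sim 1/n$ (with the integrated-tail version of the latter when $\alpha\rho=1$) and invokes a lemma of Doney \cite{Don85} stating that $b_n\sim Cc_{[a_n]}$. This relation---arising from the joint limit behaviour of $(\sum_{k\le n}\tau_k^{+},\sum_{k\le n}\chi_k^{+})$ with $\chi_k^{+}=S_{\tau_k^{+}}$---is precisely the device coupling the spatial and temporal scales. Inverting along $a_n$ then yields $\mathbf{P}(\chi^{+}>c_n)\sim C\mathbf{P}(\tau^{+}>n)$ (respectively $c_n^{-1}\int_0^{c_n}\mathbf{P}(\chi^{+}>y)\,dy\sim C\mathbf{P}(\tau^{+}>n)$), and combining with \eqref{RenStand}/\eqref{RenewRelat} and \eqref{integ1} gives \eqref{AsH} at once. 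This step is not recoverable from generating-function Tauberian bookkeeping alone; the bivariate input is essential.

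(A small slip: in the $\alpha\rho=1$ case you write $c_n=nl_1(n)$, but $\alpha\rho=1$ gives $1/\alpha=\rho$, so $c_n=n^{\rho}l_1(n)$.)
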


\begin{proof}
If $\alpha\rho<1$, then by \cite[Chapter~XIV,~formula~(3.4)]{FE}
\begin{equation*}
H(x)\sim\frac{1}{\Gamma(1-\alpha\rho)\Gamma(1+\alpha\rho)}\frac{1}{\mathbf{P}%
(\chi^{+}>x)}\quad\text{as }x\rightarrow\infty.
\end{equation*}
Hence, recalling (\ref{asymXi}), we obtain (\ref{RenStand}).

If $\alpha\rho=1$, then (\ref{RenewRelat}) follows from Theorem 2 in \cite%
{Rog71}.

Let us demonstrate the validity of \ (\ref{AsH}). We know from \cite{Rog71}
(see also \cite{GOT82}) that $\tau ^{+}\in \mathcal{D}(\rho ,1)$ under the
conditions of the lemma and, in addition, $\chi ^{+}\in \mathcal{D}(\alpha
\rho ,1)$ if $\alpha \rho <1$. This means, in particular, that for sequences
$\{a_{n},n\geq 1\}$ and $\{b_{n},n\geq 1\}$ specified by
\begin{equation}
\mathbf{P}(\tau ^{+}>a_{n})\sim \frac{1}{n}\quad \text{and}\quad \mathbf{P}%
(\chi ^{+}>b_{n})\sim \frac{1}{n}\quad \text{as }n\rightarrow \infty ,
\label{an-asym}
\end{equation}%
and vectors $(\tau _{k}^{+},\chi _{k}^{+}),\,k=1,2,...,$ being independent
copies of $(\tau ^{+},\chi ^{+}),$ we have
\begin{equation}
\frac{1}{a_{n}}\sum_{k=1}^{n}\tau _{k}^{+}\overset{d}{\rightarrow }Y_{\rho
}\quad \text{and}\quad \frac{1}{b_{n}}\sum_{k=1}^{n}\chi _{k}^{+}\overset{d}{%
\rightarrow }Y_{\alpha \rho }\text{ \ \ as \ }n\rightarrow \infty .
\label{**}
\end{equation}%
Moreover, it was established by Doney (see Lemma in \cite{Don85}, p. 358)
that
\begin{equation}
b_{n}\sim Cc_{[a_{n}]}\text{ as }n\rightarrow \infty ,  \label{doney}
\end{equation}%
where $[x]$ stands for the integer part of $x$. Therefore, $c_{n}\sim
Cb_{[a^{-1}(n)]}$, where, with a slight abuse of notation, $a^{-1}(n)$ is
the inverse function to $a_{n}$. Hence, on account of (\ref{an-asym}),
\begin{align}
\mathbf{P}(\chi ^{+}>c_{n})& \sim C_{1}\mathbf{P}(\chi
^{+}>b_{[a^{-1}(n)]})\sim \frac{C_{1}}{a^{-1}(n)}  \notag  \label{***} \\
& \sim C_{2}\mathbf{P}(\tau ^{+}>a_{[a^{-1}(n)]})\sim C_{3}\mathbf{P}(\tau
^{+}>n).
\end{align}

If $\alpha \rho =1$, then, instead of the second equivalence in (\ref%
{an-asym}), one should define $b_{n}$ by
\begin{equation*}
\frac{1}{b_{n}}\int_{0}^{b_{n}}\mathbf{P}(\chi ^{+}>y)dy\sim \frac{1}{n}%
\quad \text{as }n\rightarrow \infty
\end{equation*}%
(see \cite[p.~595]{Rog71}). In this case the second convergence in (\ref{**}%
) transforms to
\begin{equation*}
\frac{1}{b_{n}}\sum_{k=1}^{n}\chi _{k}^{+}\overset{p}{\rightarrow }1\quad
\text{as }n\rightarrow \infty ,
\end{equation*}%
while (\ref{***}) should be changed to
\begin{align}
\frac{1}{c_{n}}\int_{0}^{c_{n}}\mathbf{P}(\chi ^{+}>y)dy& \sim \frac{C_{1}}{%
b_{[a^{-1}(n)]}}\int_{0}^{b_{[a^{-1}(n)]}}\mathbf{P}(\chi ^{+}>y)dy\sim
\frac{C_{1}}{a^{-1}(n)}  \notag \\
& \sim C_{1}\mathbf{P}(\tau ^{+}>a_{[a^{-1}(n)]})\sim C_{2}\mathbf{P}(\tau
^{+}>n).  \label{4*}
\end{align}%
Combining (\ref{***}) and (\ref{4*}) with (\ref{RenStand}) and (\ref%
{RenewRelat}) gives
\begin{equation*}
H(c_{n})\sim C\mathbf{P}(\tau ^{+}>n)\text{ as }n\rightarrow \infty
\end{equation*}%
for all $X\in \mathcal{D}(\alpha ,\beta )$. Using (\ref{integ1}) finishes
the proof of the lemma.
\end{proof}

\begin{lemma}
\label{BehC}If $\mathbf{E}(-S_{\tau ^{-}})<\infty $, then there exists a
positive constant $C_{0}$ such that
\begin{equation}
c_{n}\sim C_{0}\frac{n^{1-\rho }}{l(n)}.  \label{A7}
\end{equation}
\end{lemma}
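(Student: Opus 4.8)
The plan is to combine the asymptotics for the renewal function $H$ from Lemma~\ref{Renew2} with the classical fact that, when $\mathbf{E}(-S_{\tau^-})<\infty$, the renewal function associated with the weak descending ladder heights grows linearly. First I would recall that $\tau^-$ and $\tau^+$ are dual in the sense of the Wiener--Hopf factorization; more precisely, the descending ladder height $-S_{\tau^-}$ has finite mean $m:=\mathbf{E}(-S_{\tau^-})$ exactly in the situation considered, and then by the key renewal theorem the renewal function $V(u):=\sum_{k\ge 0}\mathbf{P}(-S_{\tau^-_1}-\dots-S_{\tau^-_k}<u)$ satisfies $V(u)\sim u/m$ as $u\to\infty$. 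The point is that $V$ plays for the descending ladder structure the role that $H$ plays for the ascending one, and there is a standard identity (again from the Wiener--Hopf factorization, cf. \cite{FE}, Ch.~XII, or \cite{BGT}) relating the tail behaviour of $\tau^-$ to the growth of $H$ and of $V$.

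Concretely, the route I would take is: (i) apply (\ref{AsH}) from Lemma~\ref{Renew2}, which gives $H(c_n)\sim C n\mathbf{P}(\tau^-\!>n)$; (ii) use the asymptotics (\ref{integ1}), namely $\mathbf{P}(\tau^-\!>n)\sim l(n)n^{\rho-1}$, to rewrite the right-hand side as $H(c_n)\sim C n^\rho l(n)$; (iii) now invoke the hypothesis $\mathbf{E}(-S_{\tau^-})<\infty$. Under this hypothesis one knows (this is where the descending-ladder renewal theorem enters) that $\alpha\rho=1$: indeed, finiteness of $\mathbf{E}(-S_{\tau^-})$ forces the descending ladder height to have finite mean, which via duality is incompatible with $\alpha\rho<1$ (in the latter regime $\chi^+$, and by symmetry of the argument the descending ladder height, has infinite mean by Lemma~\ref{Lrenewal}). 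Hence we are in the case $\alpha\rho=1$ of Lemma~\ref{Renew2}, where $H(x)\sim x l_3(x)$ with $l_3(x)=(\int_0^x\mathbf{P}(\chi^+>y)dy)^{-1}$. Combining with step (ii), $c_n l_3(c_n)\sim C n^\rho l(n)$, i.e.
\begin{equation*}
c_n\sim C\,\frac{n^\rho l(n)}{l_3(c_n)}.
\end{equation*}
Since $c_n=n^{1/\alpha}l_1(n)=n^\rho \tilde l(n)$ for a slowly varying $\tilde l$ (using $\alpha\rho=1$), both sides are regularly varying of index $\rho$, and the slowly varying factors must balance; solving for $c_n$ and absorbing the slowly varying function $l_3(c_n)$ (which is itself slowly varying in $n$ because $c_n$ is regularly varying) yields $c_n\sim C_0 n^{1-\rho}l(n)^{-1}$ once one checks that $n^\rho l(n)/l_3(c_n)$ is asymptotically $n^{1-\rho}/l(n)$. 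This last identification is forced by $\alpha\rho=1$: then $\rho=1/\alpha$, so $n^\rho=n^{1/\alpha}$ and $n^{1-\rho}=n^{1-1/\alpha}$, and comparing $c_n=n^{1/\alpha}l_1(n)$ with the displayed relation pins down $l_3(c_n)\sim C l_1(n)l(n)/\,$(slowly varying), after which the claimed form of (\ref{A7}) drops out.

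The main obstacle I anticipate is step (iii): cleanly justifying that $\mathbf{E}(-S_{\tau^-})<\infty$ implies $\alpha\rho=1$, and more importantly extracting the precise slowly varying asymptotics rather than just regular variation of index $1-\rho$. The regular-variation part is soft (it follows immediately from (\ref{asyma}), (\ref{AsH}), (\ref{integ1}) and the relation $\alpha\rho=1$), but getting the \emph{equivalence} (\ref{A7}) with a genuine constant $C_0$ requires knowing that $\int_0^x\mathbf{P}(\chi^+>y)dy$ converges to a constant, equivalently $\mathbf{E}\chi^+<\infty$; this in turn should follow from $\mathbf{E}(-S_{\tau^-})<\infty$ via the Wiener--Hopf-type identity $\mathbf{E}X = \mathbf{E}\chi^+\mathbf{P}(\tau^-<\infty)/\mathbf{P}(\tau^+=\infty)$-style relations (here, since the walk oscillates, one uses the appropriate finite-mean criterion). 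Once $\mathbf{E}\chi^+<\infty$ is in hand, $l_3(x)\to (\mathbf{E}\chi^+)^{-1}=:$const, so $H(x)\sim x/\mathbf{E}\chi^+$, and then $c_n/\mathbf{E}\chi^+\sim C n\mathbf{P}(\tau^-\!>n)\sim C l(n)n^{\rho}$... — wait, this would give index $\rho$, not $1-\rho$; the resolution is that when $\mathbf{E}\chi^+<\infty$ one has $\alpha\rho=1$ hence $\rho=1-\rho$ is \emph{not} generally true, so I must instead use $n\mathbf{P}(\tau^-\!>n)\sim n\cdot l(n)n^{\rho-1}=l(n)n^\rho$ together with $\alpha\rho=1\Rightarrow n^\rho=c_n/l_1(n)$ up to constants, giving $c_n\asymp c_n\cdot l(n)/l_1(n)$, which forces $l(n)\sim C l_1(n)$ and then, re-reading $c_n=n^{1/\alpha}l_1(n)$ and $1-\rho=1-1/\alpha$, reassembling gives (\ref{A7}). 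I would present this bookkeeping carefully, as it is the only genuinely delicate point; everything else is a direct appeal to the lemmas already proved.
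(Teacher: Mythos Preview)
Your argument goes off the rails at step (iii). The hypothesis $\mathbf{E}(-S_{\tau^-})<\infty$ is a condition on the \emph{descending} ladder height, and what it forces (via Lemma~\ref{Lrenewal} applied to $\{-S_n\}$) is $\alpha(1-\rho)=1$, not $\alpha\rho=1$. Indeed, if $\alpha(1-\rho)<1$ then the descending ladder height has a tail of index $\alpha(1-\rho)<1$ and infinite mean; contrapositively, finite mean forces $\alpha(1-\rho)=1$, i.e.\ $\rho=1-1/\alpha$. This is exactly why the lemma gives $c_n$ regularly varying of index $1-\rho=1/\alpha$, matching (\ref{asyma}). Your version $\alpha\rho=1$ would instead give $1-\rho=1-1/\alpha$, and you yourself catch the resulting index mismatch (``wait, this would give index $\rho$, not $1-\rho$'') without diagnosing the cause.

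This error propagates: you then try to deduce $\mathbf{E}\chi^+<\infty$, but that is generally \emph{false} here. When $\alpha(1-\rho)=1$ and $1<\alpha<2$ one has $\alpha\rho=\alpha-1\in(0,1)$, so by Lemma~\ref{Lrenewal} the ascending ladder height $\chi^+$ has infinite mean; consequently $l_3(x)\to 0$, not a constant, and the chain of equivalences you write down collapses. More fundamentally, the whole concrete route through $H$ and $\chi^+$ is misdirected: the finiteness assumption lives on the descending side, and $H$ does not see it. (Your opening paragraph actually introduces the right object, the descending renewal function $V$, but you never use it.)

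The paper's proof does exactly what your first paragraph suggests and then stops: apply the computation (\ref{4*}) from Lemma~\ref{Renew2} to the reversed walk $\{-S_n\}$, obtaining
\[
\frac{1}{c_n}\int_0^{c_n}\mathbf{P}(\chi^->y)\,dy\;\sim\;C\,\mathbf{P}(T^->n),
\]
where $\chi^-$ and $T^-$ are the strict ascending ladder height and epoch for $\{-S_n\}$. Since $\mathbf{E}(-S_{\tau^-})<\infty$ gives $\mathbf{E}\chi^-<\infty$, the integral tends to $\mathbf{E}\chi^-$, so $c_n\mathbf{P}(T^->n)\to$ const. Then $\mathbf{P}(T^->n)\sim C\mathbf{P}(\tau^->n)$ (a one-line Wiener--Hopf consequence) and (\ref{integ1}) finish the job. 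No bookkeeping with slowly varying factors is needed.
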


\begin{proof}
Let $T^{-}:=\min \{k\geq 1:-S_{k}>0\}$ and $\chi ^{-}=-S_{T^{-}}$ be the
first strict ladder height for the random walk $\left\{ -S_{n},n\geq
0\right\} $. Applying (\ref{4*}) to $\left\{ -S_{n},n\geq 0\right\} $, we
have
\begin{equation}
\frac{1}{c_{n}}\int_{0}^{c_{n}}\mathbf{P}(\chi ^{-}>y)dy\sim C\mathbf{P}%
(T^{-}>n).  \label{A8}
\end{equation}%
Obviously, $\mathbf{E}(-S_{\tau ^{-}})<\infty $ yields $\mathbf{E}\chi
^{-}<\infty $. Therefore $\int_{0}^{c_{n}}\mathbf{P}(\chi
^{-}>y)dy\rightarrow \mathbf{E}\chi ^{-}$ as $n\rightarrow \infty $.
Combining this with (\ref{A8}), and recalling that $\mathbf{P}(T^{-}>n)\sim C%
\mathbf{P}(\tau ^{-}>n)$ in view of the equality %
\begin{equation*}
\sum_{n=1}^{\infty }\mathbf{P}(T^{-}>n)z^{n}=\sum_{n=1}^{\infty }\mathbf{P}%
(\tau ^{-}>n)z^{n}\exp \left\{ \sum_{k=1}^{\infty }\frac{z^{k}}{k}\mathbf{P}%
(S_{k}=0)\right\} ,
\end{equation*}%
 asymptotic representation (\ref{integ1}), and the estimate%
\begin{equation*}
\sum_{k=1}^{\infty }\frac{1}{k}\mathbf{P}(S_{k}=0)<\infty ,
\end{equation*}
we obtain
\begin{equation*}
\lim_{n\rightarrow \infty }c_{n}\mathbf{P}(\tau ^{-}>n)=:C_{0}\in (0,\infty
).
\end{equation*}%
On account of (\ref{integ1}) this proves (\ref{A7}).
\end{proof}


\subsection{Upper estimates for local probabilities}

For $x\geq 0$ and $n=0,1,2,...,$ let%
\begin{align*}
\ B_{n}(x)& :=\mathbf{P}\left( S_{n}\in (0,x);\tau ^{-}>n\right) , \\
b_{n}(x)& :=B_{n}(x+1)-B_{n}(x)=\mathbf{P}\left( S_{n}\in \lbrack
x,x+1);\tau ^{-}>n\right) .
\end{align*}

Note that by the duality principle for random walks
\begin{align}
1+\sum_{j=1}^{\infty }B_{j}(x)& =1+\sum_{j=1}^{\infty }\mathbf{P}\left(
S_{j}\in (0,x);\tau ^{-}>j\right)  \notag \\
& =1+\sum_{j=1}^{\infty }\mathbf{P}\left( S_{j}\in
(0,x);S_{j}>S_{0},S_{j}>S_{1},...,S_{j}>S_{j-1}\right)  \notag \\
& =H(x),\quad x>0.  \label{Dual}
\end{align}

\begin{lemma}
\label{Representation} The sequence of functions $\{B_{n}(x),\,n\geq 1\}$
satisfies the recurrence equations
\begin{equation}  \label{rep1}
nB_{n}(x)=\mathbf{P}(S_{n}\in(0,x))+\sum_{k=1}^{n-1}\int_0^x \mathbf{P}%
(S_k\in(0,x-y))dB_{n-k}(y)
\end{equation}
and
\begin{equation}  \label{rep2}
nB_{n}(x)=\mathbf{P}\left( S_{n}\in(0,x)\right) +\sum_{k=1}^{n-1}\int
_{0}^{x}B_{n-k}(x-y)\mathbf{P}\left( S_{k}\in dy\right).
\end{equation}
\end{lemma}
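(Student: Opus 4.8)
The plan is to prove both recurrences by exploiting the classical combinatorial identity behind the cycle lemma / Kac's formula, namely that for a random walk one can count the number of ``record-type'' events by averaging over cyclic shifts of the increments. Concretely, I would introduce the event $A_n(x) = \{S_n \in (0,x),\ \tau^- > n\}$, whose probability is $B_n(x)$, and note that $\{\tau^- > n\} = \{S_n > S_j \text{ for all } 0 \le j < n\} \cap \{S_n > 0\}$ after reindexing by duality (this is exactly the computation displayed in \eqref{Dual}). The key identity is the following: for a fixed trajectory $(X_1,\dots,X_n)$ with partial sums $S_0,\dots,S_n$, and for each cyclic permutation $\sigma^{(i)}$ shifting the increments by $i$, the number of indices $i \in \{0,1,\dots,n-1\}$ for which the shifted walk satisfies ``$S_n$ is a strict maximum over $S_0,\dots,S_{n-1}$'' equals the number of strict ascending ladder epochs of a suitable dual object — but for the purpose of \eqref{rep1}--\eqref{rep2} what matters is the simpler weighted version leading to the factor $n$ on the left.

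The cleaner route is via the \textbf{Kac/Spitzer identity in the form of a first-entrance / last-exit decomposition combined with averaging}. First I would write, using the Markov property at the time $k$ of the last visit to the ``current maximum'' or, more transparently, via the identity
\begin{equation*}
n\,\mathbf{P}(S_n \in (0,x);\ \tau^- > n) = \sum_{k=0}^{n-1}\mathbf{P}\big(S_n \in (0,x);\ \tau^- > n;\ \text{the maximum of } S_0,\dots,S_n \text{ is first attained at time } k+?\big),
\end{equation*}
but this is delicate. Instead, the standard and robust derivation I would actually carry out is: apply the cyclic-shift (rotation) argument directly to the quantity $n B_n(x)$. For a path $(X_1,\dots,X_n)$, among its $n$ cyclic shifts, exactly those shifts starting right after a weak descending ladder epoch of the cycle produce a path that stays strictly above its starting level; combined with the event $\{S_n \in (0,x)\}$, which is shift-invariant in the value $S_n$ but one must track where the relevant partial sum lands. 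Summing $\mathrm{I}\{S_n \in (0,x);\ \tau^- > n\}$ over all $n$ cyclic shifts of a given $n$-cycle and taking expectations yields, on one side, $n B_n(x)$, and on the other side a decomposition according to the position $k$ at which the cyclic structure breaks, giving $\mathbf{P}(S_n \in (0,x))$ (the $k=n$ or ``no break'' term) plus a convolution over $1 \le k \le n-1$ of $\mathbf{P}(S_k \in (0,\cdot))$ against the ``stay positive'' increments encoded by $B_{n-k}$. Depending on whether one conditions on $S_k$ first or on the ladder structure first, one obtains \eqref{rep1} (Stieltjes integrator $dB_{n-k}(y)$) or \eqref{rep2} (integrator $\mathbf{P}(S_k \in dy)$); the two are related by an integration by parts in the $y$-variable using $B_{n-k}(0)=0$ and left-continuity of $H$.

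The main obstacle, and the step deserving the most care, is bookkeeping the boundary/endpoint conventions: the walk uses the \emph{weak} descending ladder epoch $\tau^- = \min\{n: S_n \le 0\}$ and the \emph{strict} ascending ladder epoch, so the rotation argument must be set up with strict inequalities on one side and weak on the other, and one must verify that the ``break point'' $k$ ranges exactly over $1,\dots,n-1$ with the $k=n$ contribution being precisely $\mathbf{P}(S_n \in (0,x))$ and no double counting at $y=0$ or $y=x$. I would check this by first verifying \eqref{rep1}--\eqref{rep2} on a toy case (e.g.\ summing over $x$ and comparing with \eqref{Dual}: summing \eqref{rep2} over $x$ against the appropriate measure should reproduce $n(H(x)-1) = \ldots$ consistently), and then confirm the equivalence of \eqref{rep1} and \eqref{rep2} via Fubini and integration by parts, $\int_0^x \mathbf{P}(S_k \in (0,x-y))\,dB_{n-k}(y) = \int_0^x B_{n-k}(x-y)\,\mathbf{P}(S_k \in dy)$, which holds because both equal $\mathbf{P}(S_k + S'_{n-k} \in (0,x);\ S'_{n-k} \text{ stays positive})$ for an independent copy $S'$. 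Once the rotation identity is correctly stated, the rest is a routine application of the strong Markov property at the ladder epoch and Fubini's theorem.
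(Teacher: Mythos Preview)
Your proposal takes a genuinely different route from the paper, and as written it has a real gap at the central step.

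The paper's proof is short and purely analytic: it introduces the Fourier transforms $\mathcal{B}_n(t)=\mathbf{E}[e^{itS_n};\tau^->n]$ and $\mathcal{S}_k(t)=\mathbf{E}[e^{itS_k};S_k>0]$, quotes the Spitzer--Baxter identity
\[
1+\sum_{n\ge1}z^n\mathcal{B}_n(t)=\exp\Bigl(\sum_{k\ge1}\frac{z^k}{k}\mathcal{S}_k(t)\Bigr),
\]
differentiates in $z$, and equates coefficients of $z^{n-1}$ to obtain
\(
n\mathcal{B}_n(t)=\mathcal{S}_n(t)+\sum_{k=1}^{n-1}\mathcal{B}_{n-k}(t)\mathcal{S}_k(t).
\)
Inverting the Fourier transform (i.e.\ reading this as an identity of measures on $(0,\infty)$) and integrating over $(0,x)$ gives \eqref{rep1} and \eqref{rep2} simultaneously; the two are the same convolution written with different integrators, exactly as you note via Fubini at the end of your sketch. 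That last observation of yours is correct and is essentially what the paper means by ``going back to the distributions, we obtain the desired representations.''

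The gap in your approach is the sentence ``on the other side a decomposition according to the position $k$ at which the cyclic structure breaks.'' You never define what this break point is, and the naive rotation argument over the $n$ cyclic shifts does not produce the right-hand side in any obvious way. Concretely: for a fixed realisation with $S_n\in(0,x)$, the number of cyclic shifts $i$ for which the shifted walk stays strictly positive is a path-dependent quantity determined by the pattern of minima of $(S_0,\dots,S_{n-1})$ relative to $S_n$; it is not in general expressible as a sum over a single index $k$ of terms that factor into an \emph{unconstrained} piece $\mathbf{P}(S_k\in(0,x-y))$ and a constrained piece $dB_{n-k}(y)$. The specific convolution structure in \eqref{rep1}--\eqref{rep2}---one factor with no positivity constraint, the other with the full $\tau^->n-k$ constraint---is exactly what the exponential generating-function identity encodes, and recovering it combinatorially requires more than cyclic shifts (the classical combinatorial proofs of Spitzer's identity, e.g.\ Wendel's, use the full symmetric group and a careful bijection, not just rotations). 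If you want a probabilistic proof, you would need to identify precisely which decomposition of the path (last weak ladder index before a marked step, or similar) yields the factorisation, and then check the strict/weak inequality bookkeeping you rightly flag as delicate. As it stands, that step is asserted rather than proved.
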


\begin{remark}
The proof of (\ref{rep2}) is contained in Eppel \cite{Epp} (see formula (5)
there). Representation (\ref{rep1}) is not given by Eppel. However, it can
be easily obtained by the same method. Here we demonstrate the mentioned
relations only for the completeness of the presentation.
\end{remark}

\begin{proof}
Let
\begin{equation*}
\mathcal{B}_{n}(t):=\mathbf{E}\left[ e^{itS_{n}};\tau ^{-}>n\right]
=\int_{0}^{\infty }e^{itx}\mathbf{P}\left( {S}_{n}\in dx;\tau ^{-}>n\right)
,\ t\in (-\infty ,\infty ),
\end{equation*}%
be the Fourier transform of the measure $B_{n}$. It is known (see, for
instance, \cite{Spit}, Chapter 4, Section 17) that
\begin{equation*}
1+\sum_{n=1}^{\infty }z^{n}\mathcal{B}_{n}(t)=\exp \left\{
\sum_{k=1}^{\infty }\frac{z^{k}}{k}\mathcal{S}_{k}(t)\right\} ,\ \left\vert
z\right\vert <1,
\end{equation*}%
where $\mathcal{S}_{k}(t):=\mathbf{E}\left[ e^{itS_{k}};S_{k}>0\right] $.
Differentiation with respect to $z$ gives
\begin{equation*}
\sum_{n=1}^{\infty }z^{n-1}n\mathcal{B}_{n}(t)=\left( 1+\sum_{n=1}^{\infty
}z^{n}\mathcal{B}_{n}(t)\right) \sum_{k=1}^{\infty }z^{k-1}\mathcal{S}%
_{k}(t).
\end{equation*}%
Comparing the coefficients of $z^{n-1}$ in the both sides of this equality,
we get
\begin{equation}
n\mathcal{B}_{n}(t)=\mathcal{S}_{n}(t)+\sum_{k=1}^{n-1}\mathcal{B}_{n-k}(t)%
\mathcal{S}_{k}(t).  \label{Eppel}
\end{equation}%
Going back to the distributions, we obtain the desired representations.
\end{proof}

From now on we assume \textit{without loss of generality} that $h=1$ in the
lattice case and, to study the asymptotic behavior of the probabilities of
small deviations when $X$ is $(1,a)$-lattice, introduce a shifted sequence $%
\bar{S}_{n}:=S_{n}-an$ and probabilities $\bar{b}_{n}(x):=\mathbf{P}(\bar{S}%
_{n}=x)=b_{n}(an+x)$. Further, for fixed $x\in \mathbb{Z}$ and $1\leq k\leq
n-1$ set
\begin{equation*}
\mathcal{I}_{x}(k,n):=(-a(n-k),ak+x)\cap \mathbb{Z}.
\end{equation*}

\begin{lemma}
\label{Representation1} The sequence of functions $\{\bar{b}_{n}(x),\,n\geq
1\}$ satisfies the recurrence equation
\begin{equation}  \label{rep}
n\bar{b}_{n}(x)=\mathbf{P}(\bar{S}_{n}=x)+\sum_{k=1}^{n-1}\sum_{y\in
\mathcal{I}_{x}(k,n)}\bar{b}_{k}(x-y)\mathbf{P}(\bar{S}_{n-k}=y).
\end{equation}
\end{lemma}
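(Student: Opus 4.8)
The plan is to reproduce, in the lattice setting, the Fourier-transform computation behind Lemma~\ref{Representation}. Since $X$ is $(1,a)$-lattice, each increment lies in $a+\mathbb{Z}$, so $S_n\in an+\mathbb{Z}$ and the shifted walk $\bar S_n=S_n-an$ is $\mathbb{Z}$-valued. Hence the Fourier transforms entering identity~(\ref{Eppel}) factor as
$$\mathcal{B}_n(t)=e^{itan}\sum_{m\in\mathbb{Z}}e^{itm}\,\bar b_n(m),\qquad \mathcal{S}_n(t)=e^{itan}\sum_{\substack{m\in\mathbb{Z}\\ m>-an}}e^{itm}\,\mathbf{P}(\bar S_n=m),$$
where $\bar b_n(m)=\mathbf{P}(\bar S_n=m;\tau^->n)$ and the restriction $m>-an$ in the second sum is just the event $\{S_n>0\}$ rewritten in terms of $\bar S_n$. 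I would also record the trivial fact that $\bar b_k(i)=0$ whenever $i\le-ak$, because $\tau^->k$ forces $S_k=ak+i>0$.

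First I would recall~(\ref{Eppel}), which after relabelling $k\mapsto n-k$ in the sum reads $n\mathcal{B}_n(t)=\mathcal{S}_n(t)+\sum_{k=1}^{n-1}\mathcal{B}_k(t)\mathcal{S}_{n-k}(t)$, an identity valid for every real~$t$. Substituting the factorizations above and dividing through by $e^{itan}$, the left-hand side becomes $n\sum_{x\in\mathbb{Z}}e^{itx}\bar b_n(x)$, the first term on the right becomes $\sum_{m>-an}e^{itm}\mathbf{P}(\bar S_n=m)$, and a generic product becomes the absolutely convergent double series
$$e^{-itan}\mathcal{B}_k(t)\mathcal{S}_{n-k}(t)=\sum_{i\in\mathbb{Z}}\ \sum_{\substack{j\in\mathbb{Z}\\ j>-a(n-k)}}e^{it(i+j)}\,\bar b_k(i)\,\mathbf{P}(\bar S_{n-k}=j).$$
All of these are $2\pi$-periodic in $t$, so I may invert the Fourier expansions, i.e.\ compare the coefficient of $e^{itx}$ on the two sides (taking $i+j=x$, i.e.\ $i=x-j$, in the double series). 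This gives, for every $x\in\mathbb{Z}$,
$$n\bar b_n(x)=\mathbf{P}(\bar S_n=x)\,\mathrm{I}\{x>-an\}+\sum_{k=1}^{n-1}\ \sum_{\substack{j\in\mathbb{Z}\\ j>-a(n-k)}}\bar b_k(x-j)\,\mathbf{P}(\bar S_{n-k}=j).$$

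It then remains only to match the range of the inner sum with $\mathcal{I}_x(k,n)=(-a(n-k),ak+x)\cap\mathbb{Z}$. The lower bound $j>-a(n-k)$ is already present, having come from the indicator $\{S_{n-k}>0\}$ built into $\mathcal{S}_{n-k}$. For the upper bound I would invoke the trivial remark recorded above: $\bar b_k(x-j)$ vanishes unless $x-j>-ak$, i.e.\ unless $j<ak+x$, so the terms with $j\ge ak+x$ contribute nothing and may be discarded, leaving exactly the sum over $j\in\mathcal{I}_x(k,n)$. Finally, for the values of $x$ for which $\bar b_n(x)$ can be positive, namely $x>-an$ (the range relevant in all applications), the indicator equals $1$, and we recover~(\ref{rep}). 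The only step requiring a bit of attention is this asymmetric bookkeeping of the two support constraints — the lower cut-off is visible in $\mathcal{S}_{n-k}$ while the upper one has to be read off from the support of $\bar b_k$ — together with the routine check that the rearrangement of the double series is legitimate, which it is since $\sum_m\bar b_k(m)\le1$ and $\sum_j\mathbf{P}(\bar S_{n-k}=j)=1$.
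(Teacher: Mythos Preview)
Your proof is correct and follows exactly the route the paper takes: both derive the recursion directly from the Fourier identity~(\ref{Eppel}) and then read off the two support constraints $a(n-k)+y>0$ and $ak+x-y>0$ that define $\mathcal{I}_x(k,n)$. The paper's own proof is in fact just a two-line sketch of what you have written out in detail; your handling of the indicator $\mathrm{I}\{x>-an\}$ on the leading term is a small clarification the paper leaves implicit.
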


\begin{remark}
Alili and Doney \cite{AD99} obtained this representation in the case when $X$
is $(h,0)$-lattice.
\end{remark}

\begin{proof}
It follows from (\ref{Eppel}) that
\begin{equation*}
n\bar{b}_{n}(x)=\mathbf{P}(\bar{S}_{n}=x)+\sum_{k=1}^{n-1}\sum\bar{b}%
_{k}(x-y)\mathbf{P}(\bar{S}_{n-k}=y),
\end{equation*}%
where the second sum is taken over all $y\in \mathbb{Z}$ satisfying the
conditions $ak+x-y>0$, $a(n-k)+y>0$. This proves the lemma.
\end{proof}

\begin{lemma}
\label{L0} Assume $X\in \mathcal{D}(\alpha ,\beta )$. Then there exists $C>0
$ such that, for all $y>0$ and all $n\geq 1$,%
\begin{equation}
b_{n}(y)\leq \frac{C}{c_{n}}\frac{l(n)}{n^{1-\rho }}  \label{uupb}
\end{equation}%
and%
\begin{equation}
B_{n}(y)\leq \frac{C\left( y+1\right) }{c_{n}}\frac{l(n)}{n^{1-\rho }}.
\label{UpB}
\end{equation}
\end{lemma}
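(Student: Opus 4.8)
My plan is to establish the pointwise bound (\ref{uupb}) on $b_n(y)$ first and then deduce (\ref{UpB}) by covering $(0,y)$ with unit intervals. The idea behind (\ref{uupb}) is that the event $\{\tau^{-}>n\}$ is far more restrictive than we need: keeping only a fraction of its constraints still produces a factor $\mathbf{P}(\tau^{-}>\lceil n/2\rceil)$, while the leftover local probability refers to the \emph{unconditioned} walk, whose concentration function is $O(1/c_n)$ for walks attracted to a stable law.

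In detail: for $1\le n\le N$ (any fixed $N$) the inequality (\ref{uupb}) is trivial, since its left side is $\le 1$ and its right side is a fixed positive number, so assume $n>N$ with $N$ large. Put $m:=\lceil n/2\rceil$ and $r:=n-m=\lfloor n/2\rfloor\ge 1$. Since $\{\tau^{-}>n\}\subseteq\{\tau^{-}>m\}\cap\{S_j>0,\ m<j\le n\}$, conditioning on $\mathcal{F}_m$ and using that $(S_{m+i}-S_m)_{0\le i\le r}$ is distributed as an independent copy $(S_i')_{0\le i\le r}$ of the walk, we get
\[
b_n(y)\le\mathbf{E}\Bigl[\mathrm{I}\{\tau^{-}>m\}\,\mathbf{P}\bigl(S_r'\in[y-S_m,\,y-S_m+1)\,\big|\,\mathcal{F}_m\bigr)\Bigr]\le\mathbf{P}(\tau^{-}>m)\cdot\sup_{z\in\mathbb{R}}\mathbf{P}\bigl(S_r\in[z,z+1)\bigr).
\]
By the classical local limit theorem for sums of i.i.d.\ random variables attracted to a stable law, the concentration function of the unconditioned walk satisfies $\sup_z\mathbf{P}(S_r\in[z,z+1))\le C/c_r$ for all $r\ge1$ (Stone's theorem in the non-lattice case, Gnedenko's in the lattice case; see also \cite{Pet75}). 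Finally, by (\ref{asyma}) and (\ref{integ1}) together with the slow variation of $l_1$ and $l$, one has $c_r=c_{\lfloor n/2\rfloor}\ge C_1 c_n$ and $\mathbf{P}(\tau^{-}>m)=\mathbf{P}(\tau^{-}>\lceil n/2\rceil)\le C_2\,l(n)/n^{1-\rho}$ for all $n$; combining the last three displays yields (\ref{uupb}).

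Estimate (\ref{UpB}) is then immediate: since $(0,y)\subseteq\bigcup_{k=0}^{\lceil y\rceil-1}[k,k+1)$,
\[
B_n(y)\le\sum_{k=0}^{\lceil y\rceil-1}b_n(k)\le\lceil y\rceil\,\frac{C}{c_n}\frac{l(n)}{n^{1-\rho}}\le\frac{C(y+1)}{c_n}\frac{l(n)}{n^{1-\rho}}.
\]
The only ingredient that is not pure bookkeeping with regular variation is the uniform concentration-function bound $\sup_z\mathbf{P}(S_r\in[z,z+1))\le C/c_r$, and this is the step where some care is needed — though it is entirely classical and independent of the results of this paper, and the balanced split $m\asymp r\asymp n/2$ is exactly what makes both resulting factors of the right order. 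If one preferred to avoid invoking it, an alternative would be an induction on $n$ via the recurrence (\ref{rep2}) of Lemma~\ref{Representation}, bounding the two terms on its right-hand side against the inductive hypothesis and the identity (\ref{Dual}); the direct argument above seems shorter, so that is the one I would carry out.
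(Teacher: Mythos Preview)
Your argument is correct and is essentially the paper's own proof: split the time interval at $m\approx n/2$, relax $\{\tau^-> n\}$ to $\{\tau^->m\}$, bound the remaining local probability by the concentration function of the unconditioned walk via the Gnedenko/Stone local limit theorem, and use regular variation of $c_n$ and $\mathbf{P}(\tau^->n)$; the paper then obtains (\ref{UpB}) by summation exactly as you do.
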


\begin{proof}
For $n=1$ the statement of the lemma is obvious. Let $\left\{ S_{n}^{\ast
},n\geq 0\right\} $ be a random walk distributed as $\left\{ S_{n},n\geq
0\right\} $ and independent of it. One can easily check that for each $n\geq
2$,%
\begin{align}
b_{n}(y)& =\mathbf{P}\left( y\leq S_{n}<y+1;\tau ^{-}>n\right)  \notag \\
& =\int_{0}^{\infty }\mathbf{P}\Bigl(y-S_{[n/2]}\leq
S_{n}-S_{[n/2]}<y+1-S_{[n/2]};S_{[n/2]}\in dz;\tau ^{-}>n\Bigr)  \notag \\
& \leq \int_{0}^{\infty }\mathbf{P}\Bigl(y-z\leq S_{n-[n/2]}^{\ast
}<y+1-z;S_{[n/2]}\in dz;\tau ^{-}>[n/2]\Bigr)  \notag \\
& \leq \mathbf{P}\Bigl(\tau ^{-}>[n/2]\Bigr)\sup_{z}\mathbf{P}\Bigl(z\leq
S_{n-[n/2]}^{\ast }<z+1\Bigr).  \label{Sm1}
\end{align}%
Since the density of any $\alpha $-stable law is bounded, it follows from
the Gnedenko and Stone local limit theorems that there exists a constant $%
C>0 $ such that for all $n\geq 1$ and all $z\geq 0,$
\begin{equation}
\mathbf{P}\left( S_{n}\in \lbrack z,z+\Delta )\right) \leq \frac{C\Delta }{%
c_{n}}.  \label{EstS1}
\end{equation}%
Hence it follows, in particular, that, for any $z>0$,
\begin{equation}
\mathbf{P}\left( S_{n}\in \lbrack 0,z)\right) \leq \frac{C(z+1)}{c_{n}}.
\label{EstS2}
\end{equation}%
Substituting (\ref{EstS1}) into (\ref{Sm1}), and recalling (\ref{asyma}) and
properties of regularly varying functions, we get (\ref{uupb}). Estimate (%
\ref{UpB}) follows from (\ref{uupb}) by summation.
\end{proof}

\begin{lemma}
\label{Concentration} If $X\in \mathcal{D}(\alpha ,\beta )$ then there
exists a constant $C\in (0,\infty )$ such that
\begin{equation}
b_{n}(x)\leq C\frac{H(x)}{nc_{n}}  \label{P1}
\end{equation}%
and
\begin{equation}
B_{n}(x)\leq C\frac{xH(x)}{nc_{n}}  \label{P2}
\end{equation}%
for all $n\geq 1$ and all $x\in (0,c_{n}]$.
\end{lemma}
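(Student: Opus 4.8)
The plan is to run an induction on $n$ driven by the recurrence (\ref{rep1}), splitting the convolution sum at the midpoint $k=[n/2]$ and estimating the two halves by the two different upper bounds already at our disposal: the "flat" bound (\ref{uupb})--(\ref{UpB}) of Lemma~\ref{L0}, and the inductive hypothesis (\ref{P1})--(\ref{P2}) itself. Since $x$ is restricted to $(0,c_n]$ and $H$ is regularly varying with index $\alpha\rho<1$ (Lemma~\ref{Renew2}), on that range $H(x)$ behaves like a genuinely sublinear increasing function, which is what makes the bound $b_n(x)\le C H(x)/(nc_n)$ consistent with (and slightly stronger than, for small $x$) the uniform bound $b_n(y)\le C l(n)/(c_n n^{1-\rho})$; indeed $H(c_n)\sim C n\mathbf P(\tau^->n)\sim C n^\rho/l(n)$ by (\ref{AsH}) and (\ref{integ1}), so the two bounds match at $x\asymp c_n$. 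First I would record these monotonicity and regular-variation facts for $H$ and for $c_n$ (using $\alpha\rho<1$, so $H$ is unbounded and (\ref{RenStand}) applies; the relatively stable case $\alpha\rho=1$ needs only (\ref{RenewRelat})), together with (\ref{EstS1})--(\ref{EstS2}) and the duality identity $1+\sum_{j\ge1}B_j(x)=H(x)$ from (\ref{Dual}).

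Next, in the recurrence $nB_n(x)=\mathbf P(S_n\in(0,x))+\sum_{k=1}^{n-1}\int_0^x\mathbf P(S_k\in(0,x-y))\,dB_{n-k}(y)$, I would bound the leading term by $\mathbf P(S_n\in(0,x))\le C(x+1)/c_n\le Cx H(x)/c_n$-type expressions (for $x$ bounded away from $0$; the range of very small $x$, where $H(x)$ may be as small as $H(0+)$, has to be handled by noting $H$ is bounded below by a positive constant on $[\varepsilon,\infty)$ and that for $x$ below the lattice span or below $1$ the quantity $b_n(x)$ is controlled directly). For the sum, split into $1\le k\le[n/2]$ and $[n/2]<k\le n-1$. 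In the first block use $\mathbf P(S_k\in(0,x-y))\le C(x-y+1)/c_k$ and integrate $dB_{n-k}(y)$ against it; since $\int_0^x dB_{n-k}(y)=B_{n-k}(x)$ and $\int_0^x(x-y)\,dB_{n-k}(y)=\int_0^x B_{n-k}(y)\,dy\le x B_{n-k}(x)$, and $B_{n-k}(x)\le C x H(x)/((n-k)c_{n-k})$ by the inductive hypothesis, one gets a contribution of order $x H(x)/c_n$ after summing $\sum_{k\le n/2} c_n/(c_k(n-k)c_{n-k})\cdot$ (regular variation in $k$ and $n-k\asymp n$). In the second block, $n-k\le[n/2]$ is small, so instead bound $B_{n-k}$ via $\int_0^x dB_{n-k}(y)\le H(x)-1\le H(x)$ from (\ref{Dual}), and use the inductive bound on $b_k$ or the flat bound (\ref{uupb}) on the $S_k$-factor: $\mathbf P(S_k\in(0,x-y))\le Cx/c_k$ with $c_k\asymp c_n$; summing $\sum_{n/2<k<n}1/((n-k)\,\cdot)$ against $H(x)$ again yields order $xH(x)/c_n$. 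Dividing by $n$ produces $B_n(x)\le CxH(x)/(nc_n)$, i.e. (\ref{P2}); the estimate (\ref{P1}) for $b_n(x)=B_n(x+1)-B_n(x)$ then follows by the same splitting applied to the increment, using $b_{n-k}(y)$ in place of $B_{n-k}$ and the bound $\sum_{y}b_{n-k}(y)$ over a window, or more cleanly by redoing the argument with (\ref{rep1}) and the increment $\mathbf P(S_k\in[x-y,x-y+1))\le C/c_k$.

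The main obstacle I anticipate is making the induction close with a \emph{single} constant $C$ independent of $n$ and $x$: the sum $\sum_{k=1}^{n-1}$, after inserting regularly varying factors $c_n/c_k$, $H$ evaluated at the same $x$, and $1/(n-k)$ or $1/(n-k)^{1-\rho}$ type weights, must be shown to be bounded by a constant \emph{strictly playing against the $1/n$ prefactor} — i.e. one needs the convolution sum to be $O(n)$ times the target, not $O(n\log n)$. This requires care precisely in the "balanced" region $k\asymp n-k\asymp n$, where one should use the sharper inductive bound (with the extra $n^{-(1-\rho)}$ decay hidden in $H(x)/c_n$ versus $l(n)/(c_n n^{1-\rho})$) rather than the flat one, and invoke a Potter-type bound to control $c_n/c_k$ and $l(n)/l(k)$ uniformly. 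A secondary technical point is the behaviour near $x\downarrow0$ and the passage between $B_n$ and $b_n$ in the lattice case, where $H$ has jumps and $\mathcal I_x(k,n)$ must replace $(0,x)$; these are handled exactly as in Lemma~\ref{Representation1} and cause no essential difficulty. Throughout, the regular variation of $H$ with index $\alpha\rho<1$ (so that $H(x)\ge c\,x^{\alpha\rho-\epsilon}$ type lower bounds hold and $H(c_n)\asymp n\mathbf P(\tau^->n)$) is what keeps all the pieces consistent.
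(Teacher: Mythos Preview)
Your plan differs from the paper's proof in two structural ways, and the obstacle you flag is real and not resolved by what you have written.

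First, the paper does \emph{not} argue by induction. It starts from the other recurrence, (\ref{rep2}), which gives
\[
nb_n(x)=\mathbf P(S_n\in[x,x+1))+\sum_{k=1}^{n-1}\int_0^x b_{n-k}(x-y)\,\mathbf P(S_k\in dy)+\text{(boundary term)},
\]
and then splits at $k=[n/2]$. For $k>[n/2]$ it discretises, uses the local bound $\mathbf P(S_k\in[j,j+1))\le C/c_n$, and sums over $k$ with the duality identity (\ref{Dual}) to produce $H(x+1)/c_n$ directly. For $k\le[n/2]$ it uses only the already-proved flat bound (\ref{uupb}) on $b_{n-k}$ (since $n-k\ge n/2$), reducing everything to the estimate
\[
\frac{l(n)}{n^{1-\rho}}\sum_{k=1}^{[n/2]}\mathbf P\bigl(S_k\in[0,x+1)\bigr)\le CH(x+1),\qquad x\in(0,c_n],
\]
which is (\ref{LastStep}). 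This is the heart of the argument and is proved by a genuine case analysis in $\alpha$: for $\alpha\in(1,2]$ one uses $\sum_{k\le n}c_k^{-1}\asymp n/c_n$ together with the monotonicity of $u/H(u)$; for $\alpha\in(0,1]$ one introduces $N_x=\max\{k:c_k\le x+1\}$, bounds the sum by $C N_x (n/N_x)^{2\gamma}$, and then compares $H(x+1)\ge H(c_{N_x})\asymp l(N_x)N_x^\rho$ via (\ref{AsH}) and a Potter bound. No inductive constant ever enters.

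Second, your induction as written does not close. In your first block ($k\le[n/2]$) you feed the inductive hypothesis $B_{n-k}(x)\le C'xH(x)/((n-k)c_{n-k})$ back into the sum; after using $n-k\asymp n$, $c_{n-k}\asymp c_n$ this contributes a term of the form $C_1C'\,xH(x)/c_n$ with $C_1$ essentially $\sum_{k\le n/2}1/(n-k)\cdot c_n/c_{n-k}\asymp(\log 2)\,2^{1/\alpha}$, which exceeds $1$ for small $\alpha$ and therefore cannot be absorbed into $C'$. Replacing $\mathbf P(S_k\in(0,x-y))$ by the sharper bound $C(x-y+1)/c_k$ does not help either: for $\alpha<1$ one picks up a factor $(x+1)/n$ which is unbounded on $x\in(0,c_n]$. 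The point is that the inductive route forces you to control a self-referential constant, whereas the paper sidesteps this entirely by using Lemma~\ref{L0} for the large-$(n-k)$ half and pushing all the difficulty into (\ref{LastStep}), an inequality about \emph{unconditioned} probabilities $\mathbf P(S_k\in[0,x))$ and the renewal function $H$. That inequality, with its $\alpha$-dependent treatment, is the missing idea in your outline.
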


\begin{remark}
Comparing (\ref{P1}) and (\ref{SD}) (to be proved later), we see that, in
the domain of small deviations, the estimates given by the lemma are optimal
up to a constant factor.
\end{remark}

\begin{proof}
By (\ref{rep2}) we get%
\begin{align}
nb_{n}(x)& =\mathbf{P}\left( S_{n}\in \lbrack x,x+1)\right)
+\sum_{k=1}^{n-1}\int_{0}^{x}b_{n-k}(x-y)\mathbf{P}\left( S_{k}\in dy\right)
\notag \\
& \qquad +\sum_{k=1}^{n-1}\int_{x}^{x+1}B_{n-k}(x+1-y)\mathbf{P}\left(
S_{k}\in dy\right) .  \label{Smallb}
\end{align}

Using (\ref{uupb}), (\ref{EstS2}) and properties of slowly varying
functions, we deduce
\begin{align}
\sum_{k=1}^{[n/2]}\int_{0}^{x}b_{n-k}(x-y)\mathbf{P}\left( S_{k}\in
dy\right) & \leq C\sum_{k=1}^{[n/2]}\frac{l(n-k)}{c_{n-k}\left( n-k\right)
^{1-\rho }}\mathbf{P}\left( S_{k}\in \lbrack 0,x)\right)  \notag \\
& \leq \frac{C_{1}}{c_{n}}\frac{l(n)}{n^{1-\rho }}\sum_{k=1}^{[n/2]}\mathbf{P%
}\left( S_{k}\in \lbrack 0,x)\right) .  \label{EstSmall}
\end{align}

On the other hand, in view of (\ref{EstS1}) and monotonicity of $B_{k}(x)$
in $x$ we conclude (assuming that $x$ is integer without loss of generality
and letting $B_{k}(-1)=0$ and $H(-1)=0$) that
\begin{align*}
& \sum_{k=[n/2]+1}^{n}\int_{0}^{x}b_{n-k}(x-y)\mathbf{P}\left( S_{k}\in
dy\right) \\
& \leq\sum_{k=[n/2]+1}^{n}\sum_{j=0}^{x}\left(
B_{n-k}(x-j+1)-B_{n-k}(x-j-1)\right) \mathbf{P}\left( S_{k}\in[j,j+1)\right)
\\
& \leq\sum_{k=[n/2]+1}^{n}\sum_{j=0}^{x}\left(
B_{n-k}(x-j+1)-B_{n-k}(x-j-1)\right) \frac{C}{c_{k}} \\
& \leq\frac{C}{c_{n}}\sum_{j=0}^{x}\sum_{k=0}^{\infty}\left(
B_{k}(x-j+1)-B_{k}(x-j-1)\right) \\
& =\frac{C}{c_{n}}\sum_{j=0}^{x}\left( H(x-j+1)-H(x-j-1)\right) \\
& \leq\frac{C}{c_{n}}\left( H(x)+H(x+1)\right) \leq\frac{2C}{c_{n}}H(x+1),
\end{align*}
where for the intermediate equality we have used (\ref{Dual}). This gives
\begin{equation}
\sum_{k=[n/2]+1}^{n}\int_{0}^{x}b_{n-k}(x-y)\mathbf{P}\left( S_{k}\in
dy\right) \leq\frac{C}{c_{n}}H(x+1).  \label{EstBig}
\end{equation}

Since $x\mapsto B_{n}(x)$ increases for every $n$,
\begin{equation}
\sum_{k=1}^{n-1}\int_{x}^{x+1}B_{n-k}(x+1-y)\mathbf{P}\left( S_{k}\in
dy\right) \leq \sum_{k=1}^{n-1}B_{n-k}(1)\mathbf{P}(S_{k}\in \lbrack x,x+1)).
\label{added1}
\end{equation}%
Further, in view of (\ref{UpB}) and (\ref{EstS1}) we have
\begin{equation}
\sum_{k=1}^{[n/2]}B_{n-k}(1)\mathbf{P}(S_{k}\in \lbrack x,x+1))\leq \frac{%
C_{1}}{c_{n}}\frac{l(n)}{n^{1-\rho }}\sum_{k=1}^{[n/2]}\mathbf{P}(S_{k}\in
\lbrack x,x+1)).  \label{added2}
\end{equation}%
Applying (\ref{EstS1}) once again yields
\begin{equation}
\sum_{k=[n/2]+1}^{n-1}B_{n-k}(1)\mathbf{P}(S_{k}\in \lbrack x,x+1))\leq
\frac{C}{c_{n}}\sum_{k=[n/2]+1}^{n-1}B_{n-k}(1)\leq \frac{C}{c_{n}}H(1).
\label{added3}
\end{equation}%
Combining (\ref{Smallb})-(\ref{added3}) and using the monotonicity of $H(x)$%
, we obtain the estimate
\begin{equation*}
nb_{n}(x)\leq \frac{C}{c_{n}}\Bigl(H(x+1)+\frac{l(n)}{n^{1-\rho }}%
\sum_{k=1}^{[n/2]}\mathbf{P}\left( S_{k}\in \lbrack 0,x+1)\right) \Bigr).
\end{equation*}%
Therefore, to complete the proof of (\ref{P1}) it remains to show that
\begin{equation}
\frac{l(n)}{n^{1-\rho }}\sum_{k=1}^{[n/2]}\mathbf{P}\left( S_{k}\in \mathbf{[%
}0,x+1)\right) \leq CH(x+1).  \label{LastStep}
\end{equation}

This will be done separately for the cases $\alpha \in (1,2],\alpha \in
(0,1),$ and $\alpha =1$.

Consider first the case $\alpha \in (1,2]$. It follows from (\ref{EstS2})
that
\begin{equation}
\sum_{k=1}^{[n/2]}\mathbf{P}(0\leq S_{k}<x+1)\leq C(x+1)\sum_{k=1}^{n}\frac{1%
}{c_{k}}\leq C(x+1)\frac{n}{c_{n}},  \label{Summa1}
\end{equation}%
where at the last step we have used the relation
\begin{equation}
\sum_{k=1}^{n}\frac{1}{c_{k}}\sim \frac{\alpha }{\alpha -1}\frac{n}{c_{n}}%
\quad \text{as }n\rightarrow \infty .  \label{57}
\end{equation}%
By Lemma~\ref{Renew2} and properties of regularly varying functions we
conclude that there exists a non-decreasing function $\phi (u)$ such that $%
u/H(u)\sim \phi (u)$ as $u\rightarrow \infty $. Therefore, for any $%
\varepsilon \in (0,1/2)$ there exists a $u_{0}=u_{0}(\varepsilon )$ such
that, for all $u\geq u_{0}$,%
\begin{equation*}
(1-\varepsilon )\phi (u)\leq \frac{u}{H(u)}\leq (1+\varepsilon )\phi (u).
\end{equation*}%
From this estimate it is not difficult to conclude that there exists a
constant $C$ such that, for all $n\geq 1$ and all $x\in (0,c_{n}]$,
\begin{equation*}
\frac{x}{H(x)}\leq C\frac{c_{n}}{H(c_{n})}.
\end{equation*}%
Hence we see that the right-hand side of (\ref{Summa1}) is bounded from
above by
\begin{equation*}
C\frac{nH(x+1)}{H(c_{n})}.
\end{equation*}%
Recalling that $H(x)$ is regularly varying as $x\rightarrow \infty $, and
applying (\ref{AsH}) and (\ref{integ1}), we finally arrive at the inequality
\begin{equation*}
\sum_{k=1}^{[n/2]}\mathbf{P}(0\leq S_{k}<x+1)\leq CH(x+1)\frac{n^{1-\rho }}{%
l(n)}.
\end{equation*}%
This justifies (\ref{LastStep}) for $\alpha \in (1,2]$.

Now we turn to the case $\alpha \in (0,1)$. Letting $N_{x}:=\max \{k\geq
1:c_{k}\leq x+1\}$ and applying (\ref{EstS1}), we get
\begin{align}
\sum_{k=1}^{[n/2]}\mathbf{P}(0\leq S_{k}<x+1)& \leq
N_{x}+C(x+1)\sum_{k=N_{x}+1}^{n}\frac{1}{c_{k}}  \label{new1} \\
& \leq N_{x}+C(x+1)\frac{N_{x}}{c_{N_{x}+1}}.  \notag
\end{align}%
Here we have used the asymptotic representation
\begin{equation*}
\sum_{k=n+1}^{\infty }\frac{1}{c_{k}}\sim \frac{\alpha }{1-\alpha }\frac{n}{%
c_{n+1}}\quad \text{as }n\rightarrow \infty .
\end{equation*}%
If $\alpha =1$, then, in view of (\ref{asyma}),
\begin{equation*}
\sum_{k=N_{x}+1}^{n}\frac{1}{c_{k}}=\frac{N_{x}+1}{c_{N_{x}+1}}%
\sum_{k=N_{x}+1}^{n}\frac{l_{1}(N_{x}+1)}{l_{1}(k)}\frac{1}{k}.
\end{equation*}%
From the Karamata representation for slowly varying functions (see \cite%
{Sen76}, Theorem 1.2) we conclude that for every slowly varying function $%
l^{\ast }(x)$ and every $\gamma >0$ there exists a constant $C=C\left(
\gamma \right) $ such that
\begin{equation}
\frac{l^{\ast }(x)}{l^{\ast }(y)}\leq C\max \left\{ \left( \frac{x}{y}%
\right) ^{\gamma },\left( \frac{x}{y}\right) ^{-\gamma }\right\} \quad \text{%
for all }x,y>0.  \label{prop}
\end{equation}%
Applying this inequality to $l_{1}(x)$, we obtain
\begin{equation*}
\sum_{k=N_{x}+1}^{n}\frac{1}{c_{k}}\leq C\frac{N_{x}+1}{c_{N_{x}+1}}\Bigl(%
\frac{n}{N_{x}+1}\Bigr)^{\gamma }\log (\frac{n}{N_{x}+1}\Bigr).
\end{equation*}%
Combining this bound with (\ref{new1}), and using the inequality $%
c_{N_{x}+1}\geq x+1$, we conclude that
\begin{equation}
\sum_{k=1}^{[n/2]}\mathbf{P}(0\leq S_{k}<x+1)\leq C_{1}N_{x}\Bigl(\frac{n}{%
N_{x}}\Bigr)^{2\gamma }  \label{new2}
\end{equation}%
for all $\alpha \in (0,1]$. Consequently,
\begin{equation*}
\frac{l(n)}{n^{1-\rho }}\sum_{k=1}^{[n/2]}\mathbf{P}(0\leq S_{k}<x+1)\leq
C_{1}H(x+1)\Bigl(\frac{n}{N_{x}}\Bigr)^{2\gamma }\frac{l(n)N_{x}}{n^{1-\rho
}H(x+1)}.
\end{equation*}

The definition of $N_{x},$ (\ref{AsH}), and (\ref{integ1}) imply
\begin{equation*}
H(x+1)\geq H(c_{N_{x}})\geq Cl(N_{x})N_{x}^{\rho }.
\end{equation*}%
Therefore,
\begin{equation*}
\frac{l(n)}{n^{1-\rho }}\sum_{k=1}^{[n/2]}\mathbf{P}(0\leq S_{k}<x+1)\leq
C_{1}H(x+1)\Bigl(\frac{N_{x}}{n}\Bigr)^{1-\rho -2\gamma }\frac{l(n)}{l(N_{x})%
}.
\end{equation*}%
Applying (\ref{prop}) to $l(x)$ and choosing $\gamma =(1-\rho )/4$, we
finally arrive at the inequality
\begin{equation}
\frac{l(n)}{n^{1-\rho }}\sum_{k=1}^{[n/2]}\mathbf{P}(0\leq S_{k}<x+1)\leq
CH(x+1)\Bigl(\frac{N_{x}}{n}\Bigr)^{(1-\rho )/4}\leq CH(x+1).  \label{new3}
\end{equation}%
establishing (\ref{LastStep}) for $\alpha \in (0,1]$. Thus, (\ref{LastStep})
is justified for all $X\in \mathcal{D}(\alpha ,\beta )$, and, consequently, (%
\ref{P1}) is proved.

The second statement of the lemma follows by summation.
\end{proof}

Later on we need the following refined version of Lemma \ref{Concentration}:

\begin{corollary}
\label{Cmin}Suppose $X\in \mathcal{D}(\alpha ,\beta ).$ Then there exists a
constant $C\in \left( 0,\infty \right) $ such that, for all $n\geq 1$,%
\begin{equation}
b_{n}(x)\leq C\frac{H(\min (c_{n},x))}{nc_{n}}  \label{P11}
\end{equation}%
and
\begin{equation}
B_{n}(x)\leq C\frac{\min (c_{n},x)H(\min (c_{n},x))}{nc_{n}}.  \label{P22}
\end{equation}
\end{corollary}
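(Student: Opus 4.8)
The plan is to upgrade Lemma~\ref{Concentration} by treating the range $x>c_n$ separately, since for $x\le c_n$ the bounds (\ref{P11}) and (\ref{P22}) coincide with (\ref{P1}) and (\ref{P2}). So fix $n\ge1$ and assume $x>c_n$; the goal is to show $b_n(x)\le CH(c_n)/(nc_n)$, whence (\ref{P22}) follows by summing over the integer points in $(0,x)$, splitting the sum at $c_n$ and using (\ref{P2}) below $c_n$ and the just-proved bound above $c_n$ (together with $H(c_n)\sim Cn\mathbf P(\tau^->n)$ and the regular variation of $H$ to control the tail sum $\sum_{j>c_n}H(c_n)$ against $xH(\min(c_n,x))$).

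First I would go back to the decomposition (\ref{Sm1}) used in Lemma~\ref{L0}, but now insert the event that the walk stays positive on the \emph{second} half as well. Writing $m=[n/2]$ and using a time-reversal on the first block, one gets
\begin{equation*}
b_n(x)=\int_0^\infty b_{n-m}(dz)\,\mathbf P\bigl(S_m\in[x-z-1,x-z);\ \tau^->m\bigr)\ \le\ \int_0^{x}b_{n-m}(dz)\,b_m^{*}(x-z-1),
\end{equation*}
where $b_k^{*}(y):=\mathbf P(S_k\in[y,y+1);\tau^->k)$ refers to an independent copy; more cleanly, I would exploit that $\{S_n\in[x,x+1),\tau^->n\}$ forces $S_m\le x$ and $S_n-S_m$ to be bounded by $x$ in absolute value while the post-$m$ walk stays above $-S_m$. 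The upshot is an inequality of the shape
\begin{equation*}
b_n(x)\ \le\ \sum_{j=0}^{x} b_m(j)\, \sup_{z\ge0}\mathbf P\bigl(S_{n-m}\in[z,z+1)\bigr)\ \le\ \frac{C}{c_{n-m}}\sum_{j=0}^{x}b_m(j)\ \le\ \frac{C}{c_n}\,B_m(x+1),
\end{equation*}
using (\ref{EstS1}) and $c_{n-m}\asymp c_n$. Now I would bound $B_m(x+1)$ not by (\ref{P2}) (which would reintroduce $x$) but by the \emph{global} bound $B_m(x+1)\le B_m(\infty)=\mathbf P(\tau^->m)\asymp H(c_m)/(mc_m)\cdot c_m\cdot$(constant); more directly, $B_m(x+1)\le\mathbf P(\tau^->m)$ and by Lemma~\ref{Renew2}, $\mathbf P(\tau^->m)\asymp H(c_m)/m\asymp H(c_n)/n$. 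Combining gives $b_n(x)\le CH(c_n)/(nc_n)$, which is exactly (\ref{P11}) for $x>c_n$.

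The delicate point is getting the factor $1/c_n$ rather than just $1/c_{n-m}$ and, more importantly, making sure the ``stay positive on both halves'' reduction is valid with only a loss of a constant — one has to be careful that conditioning $\{S_m\in dz,\tau^->m\}$ and then requiring the increments after time $m$ to keep $S$ positive really does imply $\tau^->n$, and conversely that $\{\tau^->n\}\subset\{\tau^->m\}$, so only one direction of inclusion is needed and the Markov property at time $m$ applies cleanly. The other mild nuisance is the passage from $b_n$ to $B_n$: summing $b_n(x)\le CH(c_n)/(nc_n)$ over $0<j\le x$ naively gives $CxH(c_n)/(nc_n)$, which is $C\min(c_n,x)H(\min(c_n,x))/(nc_n)$ only because $x>c_n$ forces $\min(c_n,x)=c_n$ and $H(c_n)=H(\min(c_n,x))$ — so the bound (\ref{P22}) for $x>c_n$ reads $Cc_nH(c_n)/(nc_n)$, and we must check $xH(c_n)/(nc_n)\le Cc_nH(c_n)/(nc_n)$ fails for large $x$; hence for (\ref{P22}) one instead sums the refined per-point bound $b_n(j)\le CH(\min(c_n,j))/(nc_n)$ and uses that $\sum_{j=0}^{x}H(\min(c_n,j))\le C\,x\,H(c_n)$ is \emph{too weak}, so the correct reading is that (\ref{P22}) already follows from (\ref{P2}) when $x\le c_n$ and from $B_n(x)\le\mathbf P(\tau^->n)\le CH(c_n)/n = C c_n H(c_n)/(nc_n)$ when $x>c_n$, since $\min(c_n,x)=c_n$ there. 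I expect the main obstacle to be writing the two-sided-barrier decomposition with all the ladder-epoch bookkeeping precise enough that the constant is genuinely uniform in $n$ and $x$, everything else being a repackaging of Lemmas~\ref{L0}, \ref{Concentration} and~\ref{Renew2}.
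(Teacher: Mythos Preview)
Your argument is correct but considerably more elaborate than necessary. The paper's proof is a single line: for $x\le c_n$, (\ref{P11}) and (\ref{P22}) coincide with (\ref{P1}) and (\ref{P2}) of Lemma~\ref{Concentration}; for $x>c_n$, the uniform bound (\ref{uupb}) from Lemma~\ref{L0} already gives $b_n(x)\le Cc_n^{-1}l(n)n^{\rho-1}$, which by (\ref{AsH}) is $\asymp H(c_n)/(nc_n)$, and (\ref{P22}) follows from the trivial bound $B_n(x)\le\mathbf P(\tau^->n)\asymp H(c_n)/n$, which you do eventually reach. Your two-halves decomposition and the concern about ``staying positive on both halves'' amount to re-deriving (\ref{uupb}): the chain $b_n(x)\le (C/c_n)B_m(x+1)\le (C/c_n)\mathbf P(\tau^->m)$ is exactly what (\ref{Sm1}) already established, and no time-reversal or second-half barrier is needed there. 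So the ``bookkeeping obstacle'' you anticipate does not exist --- everything is already packaged in Lemma~\ref{L0}, and the corollary is an immediate combination of Lemmas~\ref{L0} and~\ref{Concentration} via (\ref{AsH}).
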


\begin{proof}
The desired estimates follow from (\ref{uupb}), (\ref{UpB}) and Lemma \ref%
{Concentration}.
\end{proof}

\begin{lemma}
\label{LDensity} There exists a constant $C\in \left( 0,\infty \right) $
such that, for all $z\in \lbrack 0,\infty ),$%
\begin{equation*}
\lim \sup_{\varepsilon \downarrow 0}\varepsilon ^{-1}\mathbf{P}\left(
M_{\alpha ,\beta }\in \lbrack z,z+\varepsilon )\right) \leq C\min
\{1,z^{\alpha \rho }\}.
\end{equation*}%
In particular,
\begin{equation*}
\lim_{z\downarrow 0}\lim \sup_{\varepsilon \downarrow 0}\varepsilon ^{-1}%
\mathbf{P}\left( M_{\alpha ,\beta }\in \lbrack z,z+\varepsilon )\right) =0.
\end{equation*}
\end{lemma}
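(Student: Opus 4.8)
The plan is to read off the estimate from Theorem~\ref{Cru}, which represents $\mathbf{P}(M_{\alpha,\beta}\in[z,z+\varepsilon))$ as a limit of conditional local probabilities, and then to control those probabilities by the uniform bound of Corollary~\ref{Cmin} together with the renewal asymptotics (\ref{AsH}) and the regular variation of $H$ from Lemma~\ref{Renew2}.

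Fix $z\ge 0$ and $\varepsilon>0$. By Theorem~\ref{Cru} with $u_1=z$, $u_2=z+\varepsilon$,
\[
\mathbf{P}(M_{\alpha,\beta}\in[z,z+\varepsilon))=\lim_{n\to\infty}\frac{\mathbf{P}\bigl(S_n\in[zc_n,(z+\varepsilon)c_n);\,\tau^->n\bigr)}{\mathbf{P}(\tau^->n)}.
\]
Next I cover $[zc_n,(z+\varepsilon)c_n)$ by $\lceil\varepsilon c_n\rceil\le\varepsilon c_n+1$ intervals of unit length and apply Corollary~\ref{Cmin} to each of the resulting probabilities $b_n(\cdot)$; using that $H$ is non-decreasing and that every argument occurring in the covering is at most $(z+\varepsilon)c_n$, this yields
\[
\mathbf{P}\bigl(S_n\in[zc_n,(z+\varepsilon)c_n);\,\tau^->n\bigr)\le C\,\frac{\varepsilon c_n+1}{nc_n}\,H\bigl(c_n\min\{1,z+\varepsilon\}\bigr).
\]
It is essential here that the bound of Corollary~\ref{Cmin} holds for every $x>0$, so that no separate treatment of the range $x>c_n$ is required.

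Dividing by $\mathbf{P}(\tau^->n)$ and using (\ref{AsH}) in the form $n\mathbf{P}(\tau^->n)\sim C_1 H(c_n)$, the right-hand side is bounded by $C_2\,\frac{\varepsilon c_n+1}{c_n}\cdot\frac{H(c_n\min\{1,z+\varepsilon\})}{H(c_n)}$. Letting $n\to\infty$: the first factor tends to $\varepsilon$ by (\ref{asyma}), and since $c_n\to\infty$ and $H$ is regularly varying of index $\alpha\rho$ by Lemma~\ref{Renew2}, the second factor tends to $\bigl(\min\{1,z+\varepsilon\}\bigr)^{\alpha\rho}=\min\{1,(z+\varepsilon)^{\alpha\rho}\}$. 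Hence
\[
\mathbf{P}(M_{\alpha,\beta}\in[z,z+\varepsilon))\le C_2\,\varepsilon\,\min\{1,(z+\varepsilon)^{\alpha\rho}\},
\]
so that dividing by $\varepsilon$ and letting $\varepsilon\downarrow 0$ gives the claimed inequality with $C=C_2$; the ``in particular'' statement then follows since $z^{\alpha\rho}\to 0$ as $z\downarrow 0$.

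There is no substantial obstacle here: the only points needing care are interchanging the covering sum with Corollary~\ref{Cmin} (bounding $H$ at all occurring arguments by its value at the largest one, which is legitimate by monotonicity) and the limit $H(c_n\min\{1,z+\varepsilon\})/H(c_n)\to(\min\{1,z+\varepsilon\})^{\alpha\rho}$, which is the uniform convergence theorem for regularly varying functions applied along $c_n\to\infty$.
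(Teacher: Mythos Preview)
Your proof is correct and follows essentially the same route as the paper: bound $\mathbf{P}(M_{\alpha,\beta}\in[z,z+\varepsilon))$ by the (lim\,sup of the) conditional probability via Theorem~\ref{Cru}, cover the interval by $O(\varepsilon c_n)$ unit intervals, apply Corollary~\ref{Cmin} with the monotonicity of $H$, and then pass to the limit using (\ref{AsH}) and the regular variation of $H$. The only cosmetic difference is that you keep track of the $+1$ from the ceiling and invoke the uniform convergence theorem explicitly, whereas the paper absorbs these into the constants.
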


\begin{proof}
For all $z\geq 0$ and all $\varepsilon >0$ we have
\begin{equation*}
\mathbf{P}\left( M_{\alpha ,\beta }\in \lbrack z,z+\varepsilon )\right) \leq
\lim \sup_{n\rightarrow \infty }\mathbf{P}\left( S_{n}\in \lbrack
c_{n}z,c_{n}(z+\varepsilon ))|\tau ^{-}>n\right) .
\end{equation*}%
Applying (\ref{P11}) gives
\begin{equation*}
\mathbf{P}\left( S_{n}\in \lbrack c_{n}z,c_{n}(z+\varepsilon ))|\tau
^{-}>n\right) \leq C\frac{H(\min \left( c_{n},(z+\varepsilon )c_{n}\right) )%
}{nc_{n}\mathbf{P}(\tau ^{-}>n)}\varepsilon c_{n}.
\end{equation*}%
Recalling that $H(x)$ is regularly varying with index $\alpha \rho $ by
Lemma \ref{Renew2} and taking into account (\ref{AsH}), we get
\begin{align*}
\mathbf{P}\left( S_{n}\in \lbrack c_{n}z,c_{n}(z+\varepsilon ))|\tau
^{-}>n\right) & \leq C\varepsilon \min \{1,(z+\varepsilon )^{\alpha \rho }\}%
\frac{H(c_{n})}{n\mathbf{P}(\tau ^{-}>n)} \\
& \leq C\varepsilon \min \{1,(z+\varepsilon )^{\alpha \rho }\}.
\end{align*}%
Consequently,
\begin{equation}
\mathbf{P}(M_{\alpha ,\beta }\in \lbrack z,z+\varepsilon ))\leq C\varepsilon
\min \{1,(z+\varepsilon )^{\alpha \rho }\}.  \label{Majorante}
\end{equation}%
This inequality shows that there exists a constant $C\in (0,\infty )$ such
that
\begin{equation*}
\lim \sup_{\varepsilon \downarrow 0}\varepsilon ^{-1}\mathbf{P}\left(
M_{\alpha ,\beta }\in \lbrack z,z+\varepsilon )\right) \leq C\min
\{1,z^{\alpha \rho }\}\text{ for all \ }z\geq 0
\end{equation*}%
as desired.
\end{proof}


\section{Probabilities of Normal deviations: Proofs of Theorems~\protect\ref%
{NormalDev'} and \protect\ref{NormalDev}}

The first part of the proof to follow is one and the same for non-lattice \
(Theorem \ref{NormalDev'})\ and lattice (Theorem \ref{NormalDev})
cases.

It follows from (\ref{rep1}) that
\begin{align}
nb_{n}(x)=\mathbf{P}(S_{n}\in \lbrack x,x+1))& +\sum_{k=1}^{n-1}\int_{0}^{x}%
\mathbf{P}(S_{k}\in \lbrack x-y,x-y+1))dB_{n-k}(y)  \notag \\
& +\sum_{k=1}^{n-1}\int_{x}^{x+1}\mathbf{P}(S_{k}\in (0,x-y+1))dB_{n-k}(y)
\notag \\
& =:R_{\varepsilon }^{(1)}(x)+R_{\varepsilon }^{(2)}(x)+R_{\varepsilon
}^{(3)}(x)+R^{(0)}(x),  \label{c1}
\end{align}%
where, for any fix $\varepsilon \in (0,1/2)$ and with a slight abuse of
notation
\begin{equation*}
R_{\varepsilon }^{(1)}(x):=\sum_{k=1}^{[\varepsilon n]}\int_{0}^{x}\mathbf{P}%
(S_{k}\in \lbrack x-y,x-y+1))dB_{n-k}(y),
\end{equation*}%
\begin{equation*}
R_{\varepsilon }^{(2)}(x):=\sum_{k=[\varepsilon n]+1}^{[(1-\varepsilon
)n]}\int_{0}^{x}\mathbf{P}(S_{k}\in \lbrack x-y,x-y+1))dB_{n-k}(y),
\end{equation*}%
\begin{equation*}
{R}_{\varepsilon }^{(3)}(x):=\mathbf{P}({S}_{n}\in \lbrack
x,x+1))+\sum_{k=[(1-\varepsilon )n]+1}^{n-1}\int_{0}^{x}\mathbf{P}(S_{k}\in
\lbrack x-y,x-y+1))dB_{n-k}(y),
\end{equation*}%
and%
\begin{equation*}
R^{(0)}(x):=\sum_{k=1}^{n-1}\int_{x}^{x+1}\mathbf{P}(S_{k}\in
(0,x-y+1))dB_{n-k}(y).
\end{equation*}%
First observe that
\begin{equation*}
R^{(0)}(x)\leq \sum_{k=1}^{n-1}\mathbf{P}(S_{k}\in (0,1))b_{n-k}(x).
\end{equation*}%
Applying Corollary \ref{Cmin} we may simplify the estimate above to
\begin{align}
R^{(0)}(x)& \leq C\sum_{k=1}^{n-1}\mathbf{P}(S_{k}\in (0,1))\frac{%
H_{n-k}(c_{n-k})}{(n-k)c_{n-k}}  \notag \\
& \leq C\frac{H(c_{n})}{nc_{n}}\sum_{k=1}^{[n/2]}\mathbf{P}(S_{k}\in (0,1))+%
\frac{C}{c_{n}}\sum_{k=1}^{[n/2]}\frac{H(c_{k})}{kc_{k}},  \label{c2}
\end{align}%
where at the last step we have used the properties of $c_{n}$ and inequality
(\ref{EstS1}).

Since $H(x)\leq Cx$, we have
\begin{equation*}
\sum_{k=1}^{[n/2]}\frac{H(c_{k})}{kc_{k}}\leq C\sum_{k=1}^{[n/2]}\frac{1}{k}%
\leq C\log n.
\end{equation*}%
Further, by (\ref{Summa1}) and (\ref{new2}) with $x=0$ we know that
\begin{equation*}
\sum_{k=1}^{[n/2]}\mathbf{P}(S_{k}\in (0,1))\leq C\left( \frac{n}{c_{n}}%
\mathrm{I}(\alpha \in (1,2])+n^{\gamma }\mathrm{I}(\alpha \in (0,1])\right)
\leq C\left( \frac{n}{c_{n}}+n^{\gamma }\right) .
\end{equation*}%
Substituting these estimates into (\ref{c2}) leads to the inequalities
\begin{equation*}
R^{(0)}(x)\leq \frac{C}{c_{n}}\left( \frac{H(c_{n})}{c_{n}}+\frac{H(c_{n})}{%
n^{1-\gamma }}+\log n\right) .
\end{equation*}%
By these relations, recalling that $\mathbf{P}(\tau ^{-}>n)$ is
regularly varying with index\break $\rho-1>-1$ (see
(\ref{integ1})) and using (\ref{AsH}), we obtain
\begin{equation}
\limsup_{n\rightarrow \infty }\frac{c_{n}}{n\mathbf{P}(\tau ^{-}>n)}%
R^{(0)}(x)=0.  \label{c3}
\end{equation}

Now we evaluate the remaining terms in (\ref{c1}).

In view of (\ref{EstS1})
\begin{equation*}
R_{\varepsilon }^{(3)}(x)\leq \frac{C}{c_{n}}\left(
1+\sum_{k=1}^{[\varepsilon n]}B_{k}(x)\right) \leq \frac{C}{c_{n}}%
\sum_{k=0}^{[\varepsilon n]}\mathbf{P}(\tau ^{-}>k)
\end{equation*}
for all $x>0$. \ Further, by (\ref{integ1})
\begin{equation*}
\sum_{k=0}^{[\varepsilon n]}\mathbf{P}(\tau ^{-}>k)\sim \rho
^{-1}\varepsilon ^{\rho }n\mathbf{P}(\tau ^{-}>n)\quad \text{as }%
n\rightarrow \infty .
\end{equation*}%
As a result we obtain
\begin{equation}
\limsup_{n\rightarrow \infty }\frac{c_{n}}{n\mathbf{P}(\tau ^{-}>n)}%
\sup_{x>0}R_{\varepsilon }^{(3)}(x)\leq C\varepsilon ^{\rho }.  \label{n1}
\end{equation}

Using the inequalities
\begin{equation}
\int_{j}^{j+1}\mathbf{P}(S_{k}\in \lbrack x-y,x-y+1))dB_{n-k}(y)\leq \mathbf{%
P}(S_{k}\in \lbrack x-j-1,x-j+1))b_{n-k}(j)  \label{n5}
\end{equation}%
and
\begin{equation}
\int_{\lbrack x]}^{x}\mathbf{P}(S_{k}\in \lbrack x-y,x-y+1))dB_{n-k}(y)\leq
\mathbf{P}(S_{k}\in \lbrack 0,2))b_{n-k}([x]),  \label{n6}
\end{equation}%
and applying Corollary~\ref{Cmin}, we get
\begin{equation*}
R_{\varepsilon }^{(1)}(x)\leq C\frac{H(c_{n})}{nc_{n}}\sum_{k=1}^{[%
\varepsilon n]}\mathbf{P}(0<S_{k}<x)\leq \varepsilon C\frac{H(c_{n})}{c_{n}}.
\end{equation*}%
From this estimate and (\ref{AsH}) we deduce
\begin{equation}
\limsup_{n\rightarrow \infty }\frac{c_{n}}{n\mathbf{P}(\tau ^{-}>n)}%
R_{\varepsilon }^{(1)}(x)\leq C\varepsilon .  \label{n2}
\end{equation}

Evaluating $R_{\varepsilon }^{(2)}(x)$\textbf{\ }we have to distinguish the
non-lattice \textbf{(Theorem \ref{NormalDev'}) and lattice (Theorem \ref%
{NormalDev})} cases. Detailed estimates are given for the non-lattice case
only. To deduce the respective estimates for the lattice case one should use
the Gnedenko local limit theorem instead of the Stone local limit theorem.

Thus, in the non-lattice case we combine the Stone local limit theorem with
the first equality in (\ref{meander1}) and obtain, uniformly in $x>0$, as $%
n\rightarrow \infty $,
\begin{align*}
R_{\varepsilon }^{(2)}(x)& =\sum_{k=[\varepsilon n]+1}^{[(1-\varepsilon )n]}%
\frac{1}{c_{n-k}}\int_{0}^{x}g_{\alpha ,\beta }\left( \frac{x-y}{c_{n-k}}%
\right) dB_{n-k}(y)+o\left( \frac{1}{c_{n\varepsilon }}%
\sum_{k=1}^{n}B_{k}(x)\right) \\
& =\sum_{k=[\varepsilon n]+1}^{[(1-\varepsilon )n]}\frac{\mathbf{P}(\tau
^{-}>k)}{c_{n-k}}\int_{0}^{x/c_{k}}g_{\alpha ,\beta }\left( \frac{x-c_{k}u}{%
c_{n-k}}\right) \mathbf{P}(M_{\alpha ,\beta }\in du) \\
& \hspace{2cm}+o\left( \frac{1}{c_{n\varepsilon }}\sum_{k=1}^{n}B_{k}(x)+%
\sum_{k=1}^{n-1}\frac{\mathbf{P}(\tau ^{-}>k)}{c_{n\varepsilon }}\right) .
\end{align*}%
According to (\ref{integ1})
\begin{equation*}
\sum_{k=1}^{n}B_{k}(x)\leq \sum_{k=1}^{n}\mathbf{P}(\tau ^{-}>k)\leq Cn%
\mathbf{P}(\tau ^{-}>n).
\end{equation*}%
Hence it follows that
\begin{align*}
R_{\varepsilon }^{(2)}(x)& =\sum_{k=[\varepsilon n]+1}^{[(1-\varepsilon )n]}%
\frac{\mathbf{P}(\tau ^{-}>k)}{c_{n-k}}\int_{0}^{x/c_{k}}g_{\alpha ,\beta
}\left( \frac{x-c_{k}u}{c_{n-k}}\right) \mathbf{P}(M_{\alpha ,\beta }\in du)
\\
& +o\left( \frac{n\mathbf{P}(\tau ^{-}>n)}{c_{n\varepsilon }}\right) .
\end{align*}%
Since $c_{k}$ and $\mathbf{P}(\tau ^{-}>k)$ are regularly varying and $%
g_{\alpha ,\beta }(x)$ is uniformly continuous in $(-\infty ,\infty )$, we
let, for brevity, $v=x/c_{n}$ and continue the previous estimates for $%
R_{\varepsilon }^{(2)}(x)$ with
\begin{align*}
& =\frac{\mathbf{P}(\tau ^{-}>n)}{c_{n}}\sum_{k=[\varepsilon
n]+1}^{[(1-\varepsilon )n]}\frac{(k/n)^{\rho -1}}{(1-k/n)^{1/\alpha }}%
\int_{0}^{v/(k/n)^{1/\alpha }}\hspace{-0.5cm}g_{\alpha ,\beta }\left( \frac{%
v-(k/n)^{1/\alpha }u}{(1-k/n)^{1/\alpha }}\right) \mathbf{P}(M_{\alpha
,\beta }\in du) \\
& \hspace{5cm}+o\left( \frac{n\mathbf{P}(\tau ^{-}>n)}{c_{n\varepsilon }}%
\right) \\
& =\frac{n\mathbf{P}(\tau ^{-}>n)}{c_{n}}f(\varepsilon ,1-\varepsilon
;v)+o\left( \frac{n\mathbf{P}(\tau ^{-}>n)}{c_{n\varepsilon }}\right)
\end{align*}%
where, for $0\leq w_{1}\leq w_{2}\leq 1$,
\begin{equation}
f(w_{1},w_{2};v):=\int_{w_{1}}^{w_{2}}\frac{t^{\rho -1}dt}{(1-t)^{1/\alpha }}%
\int_{0}^{v/t^{1/\alpha }}\hspace{-0.5cm}g_{\alpha ,\beta }\left( \frac{%
v-t^{1/\alpha }u}{(1-t)^{1/\alpha }}\right) \mathbf{P}(M_{\alpha ,\beta }\in
du).  \label{DenF}
\end{equation}%
Observe that by boundness of $g_{\alpha ,\beta }\left( y\right) $
\begin{equation*}
f(0,\varepsilon ;v)\leq C\int_{0}^{\varepsilon }t^{\rho -1}dt\leq
C\varepsilon ^{\rho }.
\end{equation*}%
Further, it follows from (\ref{Majorante}) that $\int \phi (u)\mathbf{P}%
(M_{\alpha ,\beta }\in du)\leq C\int \phi (u)du$ for every non-negative
integrable function $\phi $. Therefore,
\begin{align*}
& f(1-\varepsilon ,1;v) \\
& \leq C\int_{1-\varepsilon }^{1}\frac{t^{\rho -1}dt}{(1-t)^{1/\alpha }}%
\int_{0}^{v/t^{1/\alpha }}\hspace{-0.5cm}g_{\alpha ,\beta }\left( \frac{%
v-t^{1/\alpha }u}{(1-t)^{1/\alpha }}\right) du=\left( z=\frac{v-t^{1/\alpha
}u}{(1-t)^{1/\alpha }}\right) \\
& =C\int_{1-\varepsilon }^{1}t^{\rho -1-1/\alpha
}dt\int_{0}^{v/(1-t)^{1/\alpha }}\hspace{-0.5cm}g_{\alpha ,\beta }\left(
z\right) dz\leq C\varepsilon .
\end{align*}%
As a result we have
\begin{equation}
\limsup_{n\rightarrow \infty }\sup_{x>0}\left\vert \frac{c_{n}}{n\mathbf{P}%
(\tau ^{-}>n)}R_{\varepsilon }^{(2)}(x)-f(0,1;x/c_{n})\right\vert \leq
C\varepsilon ^{\rho }.  \label{n3}
\end{equation}%
Combining (\ref{c3}) -- (\ref{n3}) with representation (\ref{c1}) leads to
\begin{equation}
\limsup_{n\rightarrow \infty }\sup_{x>0}\left\vert \frac{c_{n}}{\mathbf{P}%
(\tau ^{-}>n)}b_{n}(x)-f(0,1;x/c_{n})\right\vert \leq C\varepsilon ^{\rho }.
\label{n4}
\end{equation}%
Since $\varepsilon >0$ is arbitrary, it follows that, as $n\rightarrow
\infty $%
\begin{equation}
\frac{c_{n}}{\mathbf{P}(\tau ^{-}>n)}b_{n}(x)-f(0,1;x/c_{n})\rightarrow 0
\label{Fconv}
\end{equation}%
uniformly in $x>0$. Recalling (\ref{meander1}), we deduce by integration of (%
\ref{Fconv}) and evident transformations that
\begin{equation}
\int_{u_{1}}^{u_{2}}f(0,1;z)dz=\mathbf{P}(M_{\alpha ,\beta }\in \lbrack
u_{1},u_{2}])  \label{deniq}
\end{equation}%
for all $0<u_{1}<u_{2}<\infty $. This means, in particular, that the
distribution of $M_{\alpha ,\beta }$ is absolutely continuous. Furthermore,
it is not difficult to see that $z\mapsto f(0,1;z)$ is a continuous mapping.
Hence, in view of (\ref{deniq}), we may consider $f(0,1;z)$ as a continuous
version of the density of the distribution of $M_{\alpha ,\beta }$ and let $%
p_{\alpha ,\beta }(z):=$ $f(0,1;z).$ This and (\ref{Fconv}) \ imply the
statement of Theorem \ref{NormalDev'} for $\Delta =1$. To establish the
desired result for arbitrary $\Delta >0$ it suffices to consider the random
walk $S_{n}/\Delta $ and to observe that
\begin{equation*}
c_{n}^{\Delta }:=\inf \left\{ u\geq 0:\frac{1}{u^{2}}\int_{-u}^{u}x^{2}P%
\left( \frac{X}{\Delta }\in dx\right) \right\} =c_{n}/\Delta .
\end{equation*}

Note that (\ref{DenF}) gives an interesting representation for $p_{\alpha
,\beta }(v):$%
\begin{equation}
p_{\alpha ,\beta }(z)=\int_{0}^{1}\frac{t^{\rho -1}dt}{(1-t)^{1/\alpha }}%
\int_{0}^{z/t^{1/\alpha }}\hspace{-0.5cm}g_{\alpha ,\beta }\left( \frac{%
z-t^{1/\alpha }u}{(1-t)^{1/\alpha }}\right) p_{\alpha ,\beta }(u)du.
\label{C0}
\end{equation}%
Besides, it follows from Lemma \ref{LDensity} that
\begin{equation*}
p_{\alpha ,\beta }(z)\leq C\min \{1,z^{\alpha \rho }\}
\end{equation*}%
and, that is not surprising,
\begin{equation}
\lim_{z\downarrow 0}p_{\alpha ,\beta }(z)=0.  \label{75}
\end{equation}%
In section \ref{Sden} we refine these statements.


\section{Probabilities of Small deviations}

\subsection{Lattice case: Proof of Theorem~\protect\ref{SmallDev}}

Recall that the span $h=1$ according to our agreement. Fix any $\varepsilon
\in (0,1/2)$ and, using Lemma~\ref{Representation1}, write
\begin{equation}
n\bar{b}_{n}(x)=R_{\varepsilon n}(x)+\bar{R}_{\varepsilon n}(x),  \label{S1}
\end{equation}%
where
\begin{equation*}
R_{\varepsilon n}(x):=\mathbf{P}(\bar{S}_{n}=x)+\sum_{k=1}^{[\varepsilon
n]}\sum_{y\in \mathcal{I}_{x}(k,n)}\bar{b}_{k}(x-y)\mathbf{P}(\bar{S}%
_{n-k}=y)
\end{equation*}%
and%
\begin{equation*}
\bar{R}_{\varepsilon n}(x):=\sum_{k=[\varepsilon n]+1}^{n-1}\sum_{y\in
\mathcal{I}_{x}(k,n)}\bar{b}_{k}(x-y)\mathbf{P}(\bar{S}_{n-k}=y).
\end{equation*}%
In view of Lemma~\ref{Concentration},
\begin{align*}
\bar{R}_{\varepsilon n}(x)& \leq C\sum_{k=[\varepsilon n]+1}^{n-1}\sum_{y\in
\mathcal{I}_{x}(k,n)}\frac{H(ak+x-y)}{kc_{k}}\mathbf{P}(\bar{S}_{n-k}=y) \\
& \leq C(\varepsilon )\frac{H(an+x)}{nc_{n}}\sum_{k=1}^{n-\left[ \varepsilon
n\right] }\mathbf{P}(0\leq S_{k}<an+x).
\end{align*}%
Introduce the set%
\begin{equation*}
\mathcal{G}_{n}:=(-an,-an+\delta _{n}c_{n}]\cap \mathbb{Z}.
\end{equation*}%
Taking into account \ estimate (\ref{Summa1}) (with $[n/2]$ replaced by $%
n-[n\varepsilon ]$), we see that for $\alpha \in (1,2]$
\begin{align}
\limsup_{n\rightarrow \infty }\sup_{x\in \mathcal{G}_{n}}\frac{c_{n}\bar{R}%
_{\varepsilon n}(x)}{H(an+x)}& \leq C(\varepsilon )\limsup_{n\rightarrow
\infty }\sup_{x\in \mathcal{G}_{n}}\frac{an+x}{c_{n}}  \notag \\
& =C(\varepsilon )\limsup_{n\rightarrow \infty }\delta _{n}=0.  \label{Al2}
\end{align}%
Similarly, writing $c^{-1}(n)$ for the inverse function of $c_{n}$ we
conclude by (\ref{new2}) (with $[n/2]$ replaced by $n-[n\varepsilon ]$) that
for $\alpha \in (0,1)$ and every $\gamma <1/2$.
\begin{eqnarray}
\limsup_{n\rightarrow \infty }\sup_{x\in \mathcal{G}_{n}}\frac{c_{n}\bar{R}%
_{\varepsilon n}(x)}{H(an+x)} &\leq &C(\varepsilon )\limsup_{n\rightarrow
\infty }\Bigl(\frac{N_{\delta _{n}c_{n}}}{n}\Bigr)^{1-2\gamma }  \notag \\
&=&C(\varepsilon )\limsup_{n\rightarrow \infty }\Bigl(\frac{c^{-1}(\delta
_{n}c_{n})}{c^{-1}(c_{n})}\Bigr)^{1-2\gamma }=0  \label{Al1}
\end{eqnarray}

According to the Gnedenko local limit theorem
\begin{equation*}
\sup_{k\in \lbrack 1,n(1-\varepsilon )]}\sup_{y\in \mathcal{I}%
_{x}(k,n)}\left\vert c_{n-k}\mathbf{P}(\bar{S}_{n-k}=y)-g_{\alpha ,\beta
}(0)\right\vert \rightarrow 0\quad \text{as }n\rightarrow \infty .
\end{equation*}%
Therefore,
\begin{align*}
\sum_{y\in \mathcal{I}_{x}(k,n)}\bar{b}_{k}(x-y)\mathbf{P}(\bar{S}_{n-k}& =y)
\\
=& \frac{g_{\alpha ,\beta }(0)+\Delta _{1}\left( x,n-k\right) }{c_{n-k}}%
\sum_{y\in \mathcal{I}_{x}(k,n)}\bar{b}_{k}(x-y),
\end{align*}%
where $\Delta _{1}\left( x,n-k\right) \rightarrow 0$ as $n\rightarrow \infty
$ uniformly in $x\in \mathcal{G}_{n}$ and $k\in \lbrack 1,n(1-\varepsilon )]$%
. Hence, by the identity
\begin{equation*}
\sum_{y\in \mathcal{I}_{x}(k,n)}\bar{b}_{k}(x-y)=B_{k}(a(n-k)+x),
\end{equation*}%
we see that
\begin{equation}
R_{\varepsilon n}(x)=(g_{\alpha ,\beta }(0)+\Delta _{2}(x,n))\left( \frac{1}{%
c_{n}}+\sum_{k=1}^{\left[ \varepsilon n\right] }\frac{1}{c_{n-k}}%
B_{k}(a(n-k)+x)\right) ,  \label{S3}
\end{equation}%
where $\Delta _{2}(x,n)\rightarrow 0$ as $n\rightarrow \infty $ uniformly in
$x\in \mathcal{G}_{n}$. Since the sequence $\left\{ c_{n},n\geq 1\right\} $
is non-decreasing and varies regularly with index $1/\alpha $ as $%
n\rightarrow \infty ,$ we have
\begin{align}
\sum_{k=1}^{[\varepsilon n]}B_{k}(a(n-k)+x)& \leq
c_{n}\sum_{k=n-[\varepsilon n]}^{n-1}\frac{1}{c_{k}}B_{n-k}(ak+x)  \notag \\
& \leq \left( (1-\varepsilon )^{-1/\alpha }+\Delta _{3}(x,n)\right)
\sum_{k=1}^{[\varepsilon n]}B_{k}(a(n-k)+x)  \label{S4}
\end{align}%
where $\Delta _{3}(x,n)\rightarrow 0$ as $n\rightarrow \infty $ uniformly in
$x\in \mathcal{G}_{n}$. On the other hand, for all $x>-an$,
\begin{equation}
H(an+x)-\sum_{k=[\varepsilon n]+1}^{\infty }B_{k}(a\left( n-k\right) +x)\leq
1+\sum_{k=1}^{[\varepsilon n]}B_{k}(a(n-k)+x)\leq H(an+x).  \label{S5}
\end{equation}%
Applying (\ref{P2}) gives for some constant $C_{1}=C_{1}(\varepsilon )$
\begin{align}
\sum_{k=[\varepsilon n]+1}^{\infty }B_{k}(a\left( n-k\right) +x)& \leq
(an+x)H(an+x)\sum_{k=[\varepsilon n]+1}^{\infty }\frac{C}{kc_{k}}  \notag \\
& \leq C_{1}\frac{(an+x)H(an+x)}{c_{n}}\leq C_{1}\delta _{n}H(an+x)
\label{S6}
\end{align}%
for all $x\in \mathcal{G}_{n}$. From (\ref{S5}) and (\ref{S6}) we conclude
that
\begin{equation}
\frac{1}{H(an+x)}\left( 1+\sum_{k=1}^{[\varepsilon n]}B_{k}(a(n-k)+x)\right)
-1\rightarrow 0  \label{S7}
\end{equation}%
uniformly in $x\in \mathcal{G}_{n}$. Combining (\ref{S3}), (\ref{S4}), and (%
\ref{S7}) leads to
\begin{equation*}
\limsup_{n\rightarrow \infty }\sup_{x\in \mathcal{G}_{n}}\left\vert \frac{%
c_{n}R_{\varepsilon n}(x)}{H(an+x)}-g_{\alpha ,\beta }(0)\right\vert \leq
r(\varepsilon ),
\end{equation*}%
where $r(\varepsilon )\rightarrow 0$ as $\varepsilon \rightarrow 0$. This
estimate, (\ref{Al2}), and (\ref{Al1}) \ show that
\begin{equation*}
\limsup_{n\rightarrow \infty }\sup_{x\in \mathcal{G}_{n}}\left\vert \frac{%
c_{n}n}{H(an+x)}\bar{b}_{n}(x)-g_{\alpha ,\beta }(0)\right\vert \leq
r(\varepsilon ).
\end{equation*}

Letting $\varepsilon \rightarrow 0$ and recalling that
\begin{equation*}
\bar{b}_{n}(x)=\mathbf{P}(S_{n}=an+x|\tau ^{-}>n)\mathbf{P}(\tau ^{-}>n)
\end{equation*}
we finish the proof of Theorem \ref{SmallDev}.

\subsection{Non-lattice case: Proof of Theorem~\protect\ref{SmallDev'}}

As in the proof of Theorem~\ref{NormalDev'} we restrict our attention to the
case $\Delta =1$. Some of our subsequent arguments are similar to those used
in the proof of Theorem~\ref{SmallDev}, and we skip the respective  details.

Using (\ref{n5}), (\ref{n6}) and Lemma \ref{Concentration} gives (in the
notation introduced after formula (\ref{c1}))
\begin{eqnarray*}
R_{\varepsilon }^{(1)}(x)+R_{\varepsilon }^{(2)}(x)
&=&\sum_{k=1}^{[(1-\varepsilon )n]}\int_{0}^{x}\mathbf{P}(S_{k}\in \lbrack
x-y,x-y+1))dB_{n-k}(y) \\
&\leq &C(\varepsilon )\frac{H(x)}{nc_{n}}\sum_{k=1}^{[(1-\varepsilon )n]}%
\mathbf{P}(0\leq S_{k}<x+1).
\end{eqnarray*}

By the arguments mimicing those used in the lattice case one can easily show
that
\begin{equation}
\lim_{n\rightarrow \infty }\sup_{0<x\leq \delta _{n}c_{n}}\frac{c_{n}}{H(x)}%
\left( R_{\varepsilon }^{(1)}(x)+R_{\varepsilon }^{(2)}(x)\right) =0.
\label{Y1}
\end{equation}%
Further, by the Stone local limit theorem
\begin{equation*}
\int_{0}^{x}\mathbf{P}(S_{k}\in \lbrack x-y,x-y+1))dB_{n-k}(y)=\frac{%
g_{\alpha ,\beta }(0)+\Delta _{1}(k,x)}{c_{k}}B_{n-k}(x),
\end{equation*}%
where $\Delta _{1}(k,x)\rightarrow 0$ uniformly in $x\in (0,\delta
_{n}c_{n}) $ and $k\in \lbrack (1-\varepsilon )n,n]$. Therefore,
\begin{align*}
R_{\varepsilon }^{(3)}(x)& =\mathbf{P}(S_{n}\in \lbrack x,x+1))+\hspace{-2mm}%
\sum_{k=[(1-\varepsilon )n]+1}^{n-1}\int_{0}^{x}\mathbf{P}(S_{k}\in \lbrack
x-y,x-y+1))dB_{n-k}(y) \\
& =(g_{\alpha ,\beta }(0)+\Delta _{2}(n,x))\Bigl(\frac{1}{c_{n}}%
+\sum_{k=1}^{[\varepsilon n]}\frac{1}{c_{n-k}}B_{k}(x)\Bigr),
\end{align*}%
where $\Delta _{2}(n,x)\rightarrow 0$ uniformly in $x\in (0,\delta
_{n}c_{n}) $. Therefore, as in the lattice case,
\begin{equation}
\limsup_{n\rightarrow \infty }\sup_{0<x\leq \delta _{n}c_{n}}\Bigl|\frac{%
c_{n}}{H(x)}R_{\varepsilon }^{(3)}(x)-g_{\alpha ,\beta }(0)\Bigr|\leq
r(\varepsilon ),  \label{Y2}
\end{equation}%
where $r(\varepsilon )\rightarrow 0$ as $\varepsilon \rightarrow 0$.
Combining (\ref{Y1}) and (\ref{Y2}), we get
\begin{equation}
\limsup_{n\rightarrow \infty }\sup_{0<x\leq \delta _{n}c_{n}}\Bigl|\frac{%
c_{n}}{H(x)}\Bigl(R_{\varepsilon }^{(1)}(x)+R_{\varepsilon
}^{(2)}(x)+R_{\varepsilon }^{(3)}(x)\Bigr)-g_{\alpha ,\beta }(0)\Bigr|=0.
\label{Y3}
\end{equation}

Now using definition (\ref{c1}) we write $R^{(0)}(x)=$ $R_{\varepsilon
}^{(4)}(x)+R_{\varepsilon }^{(5)}(x),$ where%
\begin{equation*}
R_{\varepsilon }^{(4)}(x):=\sum_{k=1}^{[(1-\varepsilon )n]}\int_{x}^{x+1}%
\mathbf{P}(S_{k}\in (0,x-y+1))dB_{n-k}(y)
\end{equation*}%
and%
\begin{equation*}
R_{\varepsilon }^{(5)}(x):=\sum_{k=[(1-\varepsilon )n]+1}^{n-1}\int_{x}^{x+1}%
\mathbf{P}(S_{k}\in (0,x-y+1))dB_{n-k}(y).
\end{equation*}%
Evidently,
\begin{equation*}
R_{\varepsilon }^{(4)}(x)\leq \sum_{k=1}^{[(1-\varepsilon )n]}\mathbf{P}%
(S_{k}\in (0,1))b_{n-k}(x).
\end{equation*}%
Applying (\ref{EstS1}) and (\ref{P1}), we see that
\begin{equation*}
R_{\varepsilon }^{(4)}(x)\leq H(x)\sum_{k=1}^{[(1-\varepsilon )n]}\frac{1}{%
c_{k}}\frac{1}{(n-k)c_{n-k}}\leq \frac{C(\varepsilon )}{nc_{n}}\sum_{k=1}^{n}%
\frac{1}{c_{k}}
\end{equation*}%
Observing that $\sum_{k=1}^{n}c_{k}^{-1}\leq C(1+n/c_{n})$, we conclude that
\begin{equation}
\lim_{n\rightarrow \infty }\sup_{0<x\leq \delta _{n}c_{n}}\frac{c_{n}}{H(x)}%
R_{\varepsilon }^{(4)}(x)=0.  \label{Y4}
\end{equation}%
Further, by the Stone local limit theorem,
\begin{align*}
\int_{x}^{x+1}\mathbf{P}(S_{k}& \in \lbrack 0,x-y+1))dB_{n-k}(y)\hspace{3cm}
\\
& =\frac{(g_{\alpha ,\beta }(0)+\Delta _{3}(k,x))}{c_{k}}%
\int_{x}^{x+1}(x-y+1)dB_{n-k}(y),
\end{align*}%
where $\Delta _{3}(k,x)\rightarrow 0$ uniformly in $x\in (0,\delta
_{n}c_{n}] $ and $k\in \lbrack (1-\varepsilon )n,n]$. Integration by parts
gives
\begin{equation*}
\int_{x}^{x+1}(x-y+1)dB_{n-k}(y)=-B_{n-k}(x)+\int_{x}^{x+1}B_{n-k}(y)dy.
\end{equation*}%
Consequently,
\begin{equation}
R_{\varepsilon }^{(5)}(x)=(g_{\alpha ,\beta }(0)+\Delta
_{4}(n,x))\sum_{k=1}^{[\varepsilon n]}\frac{1}{c_{n-k}}\Bigl(%
\int_{x}^{x+1}B_{k}(y)dy-B_{k}(x)\Bigr),  \label{Y5}
\end{equation}%
where $\Delta _{4}(n,x)\rightarrow 0$ uniformly in $x\in (0,\delta
_{n}c_{n}] $.

Setting
\begin{equation*}
I(x):=\int_{x}^{x+1}H(y)dy-H(x)
\end{equation*}%
we see, similarly to the proof in the lattice case, that
\begin{equation}
\limsup_{n\rightarrow \infty }\sup_{0<x\leq \delta _{n}c_{n}}\Big|\frac{c_{n}%
}{I(x)}\sum_{k=1}^{[\varepsilon n]}\frac{1}{c_{n-k}}\Bigl(%
\int_{x}^{x+1}B_{k}(y)dy-B_{k}(x)\Bigr)-g_{\alpha ,\beta }(0)\Big|\leq
r(\varepsilon ),  \label{Y6}
\end{equation}%
where $r(\varepsilon )\rightarrow 0$ as $\varepsilon \rightarrow 0$. By (\ref%
{Y4}) -- (\ref{Y6}) we deduce
\begin{equation}
\lim_{n\rightarrow \infty }\sup_{0<x\leq \delta _{n}c_{n}}\Big|\frac{c_{n}}{%
I(x)}R^{(0)}(x)-g_{\alpha ,\beta }(0)\Bigr|=0.  \label{Y7}
\end{equation}%
Substituting (\ref{Y3}) and (\ref{Y7}) into (\ref{c1}) finishes the proof.

\subsection{Proof of Theorem \protect\ref{Density}\label{Sden}}

It is sufficient to show that there exists a constant $C>0$ such that
\begin{equation}
p_{\alpha ,\beta }(\varepsilon _{m})\sim C\varepsilon _{m}^{\alpha \rho }%
\text{ as }m\rightarrow \infty  \label{c0}
\end{equation}%
for every sequence $\varepsilon _{m}\rightarrow 0$. Since $H(x)$ is
regularly varying with index $\alpha \rho $, there exists a sequence $%
n_{1}(m)\rightarrow \infty $ as $m\rightarrow \infty $ such that
\begin{equation*}
\sup_{n\geq n_{1}(m)}\left\vert \frac{\varepsilon _{m}^{\alpha \rho }H(c_{n})%
}{H(\varepsilon _{m}c_{n})}-1\right\vert \rightarrow 0\text{ as }%
m\rightarrow \infty .
\end{equation*}

From this fact and Theorem~\ref{SmallDev'} we deduce:
\begin{eqnarray}
c_{n}\mathbf{P}(S_{n}\in \lbrack \varepsilon _{m}c_{n},\varepsilon
_{m}c_{n}+1)|\tau ^{-}>n) &=&g_{\alpha ,\beta }(0)\frac{H(\varepsilon
_{m}c_{n})}{n\mathbf{P}\left( \tau ^{-}>n\right) }(1+\varphi _{n,m}^{(1)})
\notag \\
&=&g_{\alpha ,\beta }(0)\frac{\varepsilon _{m}^{\alpha \rho }H(c_{n})}{n%
\mathbf{P}\left( \tau ^{-}>n\right) }(1+\varphi _{n,m}^{(2)})  \label{D1}
\end{eqnarray}%
where, for $i=1,2$,%
\begin{equation*}
\sup_{n\geq n_{1}(m)}|\varphi _{n,m}^{(i)}|\rightarrow 0\text{ as }%
m\rightarrow \infty .
\end{equation*}%
Further, according to Theorem~\ref{NormalDev'},
\begin{equation}
c_{n}\mathbf{P}(S_{n}\in \lbrack \varepsilon _{m}c_{n},\varepsilon
_{m}c_{n}+1)|\tau ^{-}>n)=p_{\alpha ,\beta }(\varepsilon _{m})+\varphi
_{n,m},  \label{D2}
\end{equation}%
where $\varphi _{n,m}=\varphi _{n,m}(\varepsilon _{m})\rightarrow 0$ as $%
n\rightarrow \infty $ uniformly for all possible choices of $\varepsilon
_{m} $, that is,
\begin{equation}
\sup_{\{\varepsilon _{m}\}}|\varphi _{n,m}|\leq \Phi _{n}\quad \text{and}%
\quad \lim_{n\rightarrow \infty }\Phi _{n}=0.  \label{D2'}
\end{equation}

Comparing (\ref{D1}) and (\ref{D2}) gives%
\begin{equation}
p_{\alpha ,\beta }(\varepsilon _{m})=g_{\alpha ,\beta }(0)\frac{\varepsilon
_{m}^{\alpha \rho }H(c_{n(m)})}{n(m)\mathbf{P}\left( \tau ^{-}>n(m)\right) }%
(1+\varphi _{n(m),m}^{(2)})-\varphi _{n(m),m}  \label{D3}
\end{equation}%
where $n(m)$ is any sequence satisfying $n(m)\geq n_{1}(m)$ for all $m\geq 1$%
. Let $n_{2}(m)$ be defined by the relation
\begin{equation*}
n_{2}(m):=\min \{n\geq 1:\sup_{k\geq n}\Phi _{n}<\varepsilon _{m}^{\alpha
\rho +1}\}.
\end{equation*}%
Now, if $n(m)\geq \max \{n_{1}(m),n_{2}(m)\}$, then from the definition of $%
n_{2}(m)$ and (\ref{D3}) we have
\begin{equation*}
p_{\alpha ,\beta }(\varepsilon _{m})=g_{\alpha ,\beta }(0)\varepsilon
_{m}^{\alpha \rho }\frac{H(c_{n(m)})}{n(m)\mathbf{P}(\tau ^{-}>n(m))}%
(1+\varphi _{n(m),m}^{(2)})+O(\varepsilon _{m}^{\alpha \rho +1}).
\end{equation*}%
Taking into account (\ref{AsH}), we obtain (\ref{c0}). The theorem is proved.


\section{Proof of Theorem \protect\ref{LadEpoch}}

We start with the following technical lemma which may be known from the
literature.

\begin{lemma}
\label{Ltec}Let $w(n)$ be a monotone increasing function. If, \ for some $%
\gamma>0,$ there exist slowly varying functions $l^{\ast}(n)$ and $%
l^{\ast\ast}(n)$ such that, as $n\rightarrow\infty$,%
\begin{equation*}
\sum_{k=n}^{\infty}\frac{w(k)}{k^{\gamma+1}l^{\ast}(k)}\sim\frac{1}{%
n^{\gamma }l^{\ast\ast}(n)},
\end{equation*}
then, as $n\rightarrow\infty$,%
\begin{equation*}
w(n)\sim\gamma\frac{l^{\ast}(n)}{l^{\ast\ast}(n)}.
\end{equation*}
\end{lemma}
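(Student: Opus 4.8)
The plan is to recover $w$ from its weighted tail sum by a dyadic blocking argument based on the monotonicity of $w$. Write $v(k):=k^{-\gamma-1}/l^{\ast}(k)$ and $S(n):=\sum_{k=n}^{\infty}w(k)v(k)$, so that the hypothesis reads $S(n)\sim n^{-\gamma}/l^{\ast\ast}(n)$, and put $r(n):=\gamma\,l^{\ast}(n)/l^{\ast\ast}(n)$, which is slowly varying (being a ratio of slowly varying functions). The goal is then $w(n)\sim r(n)$.

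First I would compute the asymptotics of the block sum $T(n):=\sum_{k=n}^{2n-1}v(k)$ and of the corresponding block of $S$. For the former, the uniform convergence theorem for slowly varying functions gives $\sup_{n\le k\le 2n}|l^{\ast}(k)/l^{\ast}(n)-1|\to 0$, and comparing $\sum_{k=n}^{2n-1}k^{-\gamma-1}$ with $\int_{n}^{2n}x^{-\gamma-1}\,dx=\gamma^{-1}(1-2^{-\gamma})n^{-\gamma}$ yields $T(n)\sim\gamma^{-1}(1-2^{-\gamma})n^{-\gamma}/l^{\ast}(n)$. For the latter, since $\gamma>0$ and $l^{\ast\ast}(2n)\sim l^{\ast\ast}(n)$, one gets $S(n)-S(2n)\sim(1-2^{-\gamma})n^{-\gamma}/l^{\ast\ast}(n)$. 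Dividing, the constants $1-2^{-\gamma}$ cancel and
\begin{equation*}
\frac{S(n)-S(2n)}{T(n)}\sim\frac{\gamma\,l^{\ast}(n)}{l^{\ast\ast}(n)}=r(n)\qquad\text{as }n\to\infty .
\end{equation*}

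Next I would invoke the monotonicity of $w$ to obtain, for every $n$, the exact sandwich
\begin{equation*}
w(n)\,T(n)\ \le\ S(n)-S(2n)=\sum_{k=n}^{2n-1}w(k)v(k)\ \le\ w(2n)\,T(n).
\end{equation*}
Dividing by $T(n)$ and using the displayed asymptotics, the left inequality gives $w(n)\le r(n)(1+o(1))$, hence $\limsup_{n}w(n)/r(n)\le 1$; the right inequality gives $w(2n)\ge r(n)(1+o(1))$, and since $r(n)\sim r(2n)$ this means $\liminf_{n}w(2n)/r(2n)\ge 1$. To spread the lower bound to all indices, fix $m$, put $n=\lfloor m/2\rfloor$ so that $2n\le m$ and $w(m)\ge w(2n)$ by monotonicity; since $2n\in\{m-1,m\}$ and $r$ is slowly varying, $r(2n)\sim r(m)$, whence $\liminf_{m}w(m)/r(m)\ge\liminf_{n}w(2n)/r(2n)\ge 1$. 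Combining this with the $\limsup$ bound yields $w(n)\sim r(n)=\gamma\,l^{\ast}(n)/l^{\ast\ast}(n)$.

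The argument is elementary; the only point requiring a little care — and the reason one cannot simply quote the monotone density theorem — is that the summand $w(k)v(k)=w(k)k^{-\gamma-1}/l^{\ast}(k)$ is itself not monotone, being a product of an increasing factor with a regularly varying factor of negative index. The choice of the dilation factor $2$ is what makes the computation clean: it forces the constants arising from $S(n)-S(2n)$ and from $T(n)$ to agree, and on the block $[n,2n)$ the function $w$ varies by exactly the ratio $w(2n)/w(n)$ that the two-sided bound is designed to control. (One could equally use blocks $[n,\lfloor\lambda n\rfloor)$ with $\lambda\downarrow 1$, at the cost of one extra limiting step.)
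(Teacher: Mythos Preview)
Your proof is correct and follows essentially the same approach as the paper: sandwich the block sum $\sum_{k}w(k)v(k)$ over a dyadic (or more generally, geometric) interval using the monotonicity of $w$, compute both sides via the tail asymptotics and slow variation, and then undo the dilation using that $r=\gamma\,l^{\ast}/l^{\ast\ast}$ is slowly varying. The only cosmetic difference is the direction of the block: the paper uses $[\lfloor\delta n\rfloor,n]$ with $\delta\in(0,1)$, which yields the lower bound on $w(n)$ directly and requires a substitution for the upper bound, whereas your choice $[n,2n)$ reverses the roles; neither version needs to send the dilation parameter to~$1$.
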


\begin{proof}
Let, for this lemma only, $r_{i}(n),n=1,2,...;i=1,2,3,4$ be sequences of
real numbers vanishing as $n\rightarrow \infty .$ For $\delta \in (0,1)$ we
have by monotonicity of $w(n)$ and properties of slowly varying functions%
\begin{align*}
w(\left[ \delta n\right] )\sum_{k=\left[ \delta n\right] }^{n}\frac{1}{%
k^{\gamma +1}l^{\ast }(k)}& =w(\left[ \delta n\right] )\frac{1+r_{2}(n)}{%
\gamma n^{\gamma }l^{\ast }(n)}\left( \delta ^{-\gamma }-1\right) \\
& \leq \sum_{k=\left[ \delta n\right] }^{n}\frac{w(k)}{k^{\gamma +1}l^{\ast
}(k)}=\frac{1+r_{1}(n)}{n^{\gamma }l^{\ast \ast }(n)}\left( \delta ^{-\gamma
}-1\right) \\
& \leq w(n)\sum_{k=\left[ \delta n\right] }^{n}\frac{1}{k^{\gamma +1}l^{\ast
}(k)} \\
& =w(n)\frac{1+r_{2}(n)}{\gamma n^{\gamma }l^{\ast }(n)}\left( \delta
^{-\gamma }-1\right) .
\end{align*}%
Hence it follows that%
\begin{equation*}
w(\left[ \delta n\right] )\leq \frac{1+r_{1}(n)}{1+r_{2}(n)}\frac{\gamma
l^{\ast }(n)}{l^{\ast \ast }(n)}\leq w(n)
\end{equation*}%
and, therefore,%
\begin{equation*}
\frac{1+r_{1}(n)}{1+r_{2}(n)}\frac{\gamma l^{\ast }(n)}{l^{\ast \ast }(n)}%
\leq w(n)\leq \frac{1+r_{3}(\left[ n\delta ^{-1}\right] )}{1+r_{4}(\left[
n\delta ^{-1}\right] )}\frac{\gamma l^{\ast }(\left[ n\delta ^{-1}\right] )}{%
l^{\ast \ast }(\left[ n\delta ^{-1}\right] )}.
\end{equation*}%
Since $l^{\ast }$ and $l^{\ast \ast }$ are slowly varying functions, we get%
\begin{equation*}
\lim_{n\rightarrow \infty }\frac{w(n)l^{\ast \ast }(n)}{\gamma l^{\ast }(n)}%
=1,
\end{equation*}%
as desired.
\end{proof}

\begin{remark}
\label{Rem22}By the same arguments one can show that if $w(x)$ is a monotone
increasing function and,\ for some $\gamma >0,$ there exist slowly varying
functions $l^{\ast }(x)$ and $l^{\ast \ast }(x)$ such that, as $x\rightarrow
\infty $,%
\begin{equation*}
\int_{x}^{\infty }\frac{w(y)dy}{y^{\gamma +1}l^{\ast }(y)}\sim \frac{1}{%
x^{\gamma }l^{\ast \ast }(x)},
\end{equation*}%
then, as $x\rightarrow \infty $,%
\begin{equation*}
w(x)\sim \gamma \frac{l^{\ast }(x)}{l^{\ast \ast }(x)}.
\end{equation*}
Note also that this staement for the case $l^{\ast }(x)\equiv Const$ can be
found in \cite[Chapter VIII, Section 9]{FE}.
\end{remark}

\subsection{Proof of Theorem \protect\ref{LadEpoch} for $\{0<\protect\alpha %
<2,\,\protect\beta <1\}$.}

For a fixed $\varepsilon \in (0,1)$ write%
\begin{equation*}
\mathbf{P}\left( \tau ^{-}=n\right) =\mathbf{P}\left( S_{n}\leq 0;\tau
^{-}>n-1\right) =:J_{1}(\varepsilon c_{n})+J_{2}(\varepsilon c_{n})
\end{equation*}%
where%
\begin{equation*}
J_{1}(\varepsilon c_{n}):=\int_{\varepsilon }^{\infty }\mathbf{P}\left(
X\leq -yc_{n}\right) \mathbf{P}\left( S_{n-1}\in c_{n}dy;\tau
^{-}>n-1\right) .
\end{equation*}%
and%
\begin{equation*}
J_{2}(\varepsilon c_{n}):=\int_{0}^{\varepsilon c_{n}}\mathbf{P}\left( X\leq
-y\right) \mathbf{P}\left( S_{n-1}\in dy;\tau ^{-}>n-1\right)
\end{equation*}
First we study properties of $J_{1}(\varepsilon c_{n})$.

We know from (\ref{Tailtwo}) and (\ref{tailF}) that if $X\in \mathcal{D}%
\left( \alpha ,\beta \right) $ with $0<\alpha <2$ and $\beta <1$, then, for
a $q\in (0,1]$,
\begin{equation}
\mathbf{P}\left( X\leq -y\right) \sim \frac{q}{y^{\alpha }l_{0}(y)}\quad
\text{as }y\rightarrow \infty ,  \label{lefttail}
\end{equation}%
and, according to (\ref{tailF'}),%
\begin{equation*}
\mathbf{P}\left( X\leq -c_{n}\right) \sim \frac{q(2-\alpha )}{\alpha n}\quad
\text{as }n\rightarrow \infty .
\end{equation*}%
Moreover, for any $\varepsilon >0$,
\begin{equation}
\frac{\mathbf{P}\left( X\leq -yc_{n}\right) }{\mathbf{P}\left( X\leq
-c_{n}\right) }\rightarrow y^{-\alpha }\quad \text{as }n\rightarrow \infty ,
\label{aa}
\end{equation}%
uniformly in $y\in (\varepsilon ,\infty ).$

It easily follows from (\ref{aa}) and (\ref{meander1}) that, as $%
n\rightarrow \infty ,$%
\begin{align}
& J_{1}(\varepsilon c_{n})=\mathbf{P}\left( X\leq -c_{n}\right) \mathbf{P}%
\left( \tau ^{-}>n-1\right) \int_{\varepsilon }^{\infty }\frac{\mathbf{P}%
\left( X\leq -yc_{n}\right) }{\mathbf{P}\left( X\leq -c_{n}\right) }\mathbf{P%
}\left( \frac{S_{n-1}}{c_{n}}\in dy\,|\,\tau ^{-}>n-1\right)  \notag \\
& \sim \frac{q(2-\alpha )l(n)}{\alpha n^{2-\rho }}\int_{\varepsilon
}^{\infty }\frac{\mathbf{P}\left( X\leq -yc_{n}\right) }{\mathbf{P}\left(
X\leq -c_{n}\right) }\mathbf{P}\left( \frac{S_{n-1}}{c_{n}}\in dy\,|\,\tau
^{-}>n-1\right)  \notag \\
& \sim \frac{q(2-\alpha )l(n)}{\alpha n^{2-\rho }}\int_{\varepsilon
}^{\infty }\frac{\mathbf{P}\left( M_{\alpha ,\beta }\in dy\,\right) }{%
y^{\alpha }}.  \label{exact1}
\end{align}

From Theorem~\ref{Density} follows that $p_{\alpha,\beta}(y)\leq
Cy^{\alpha\rho}$ for some positive constant $C$ and all $y\in(0,1]$.
Consequently,
\begin{equation*}
\int_0^\infty\frac{\mathbf{P}\left(M_{\alpha,\beta }\in dy\,\right) }{%
y^{\alpha}} \leq C\int_0^1 y^{-\alpha+\alpha\rho}dy+\mathbf{P}%
\left(M_{\alpha,\beta }>1\,\right).
\end{equation*}
Noting that the condition $\beta<1$ implies the bound $-\alpha+\alpha\rho>-1$%
, we conclude that
\begin{equation*}
\int_0^\infty\frac{\mathbf{P}\left(M_{\alpha,\beta }\in dy\,\right) }{%
y^{\alpha}} <\infty.
\end{equation*}

Therefore,
\begin{equation}
\lim_{\varepsilon \rightarrow 0}\lim_{n\rightarrow \infty }\frac{\alpha
n^{2-\rho }}{q(2-\alpha )l(n)}J_{1}(\varepsilon c_{n})=\int_{0}^{\infty }%
\frac{\mathbf{P}\left( M_{\alpha ,\beta }\in dy\,\right) }{y^{\alpha }}.
\label{exact2}
\end{equation}%
Now to complete the proof of Theorem \ref{LadEpoch} in the case $\{0<\alpha
<2,\beta <1\}$ it remains to demonstrate that
\begin{equation}
\lim_{\varepsilon \rightarrow 0}\limsup_{n\rightarrow \infty }\frac{%
n^{2-\rho }}{l(n)}J_{2}(\varepsilon c_{n})=0.  \label{cc}
\end{equation}%
To this aim we observe that
\begin{equation*}
J_{2}(\varepsilon c_{n})\leq \sum_{j=0}^{[\varepsilon c_{n}]+1}\mathbf{P}%
\left( X\leq -j\right) b_{n-1}(j)=:R(\varepsilon c_{n})
\end{equation*}%
and evaluate $R(\varepsilon c_{n})$ separately for the following two cases:

(i) $\beta\in(-1,1);$

(ii) $\beta=-1.$

\vspace{12pt}

\textbf{(i)}. In view of (\ref{P1}), equivalences (\ref{RenStand}) and (\ref%
{Tailtwo}), we have
\begin{align*}
R(\varepsilon c_{n})& \leq C\sum_{j=1}^{[\varepsilon c_{n}]+1}\frac{1}{%
j^{\alpha }l_{0}(j)}\frac{j^{\alpha \rho }l_{2}(j)}{nc_{n}}\leq C_{2}\frac{1%
}{nc_{n}}\left( \varepsilon c_{n}\right) ^{1-\alpha (1-\rho )}\frac{%
l_{2}(\varepsilon c_{n})}{l_{0}(\varepsilon c_{n})} \\
& \leq C_{3}\frac{1}{nc_{n}}\varepsilon ^{1-\alpha (1-\rho )-\gamma
}c_{n}^{1-\alpha (1-\rho )}\frac{l_{2}(c_{n})}{l_{0}(c_{n})}\leq
C_{4}\varepsilon ^{1-\alpha (1-\rho )-\gamma }\frac{H(c_{n})\mathbf{P}%
(|X|>c_{n})}{n}
\end{align*}%
for any fixed $\gamma \in (0,1-\alpha (1-\rho ))$ and all sufficiently large
$n$. At the third step we have applied (\ref{prop}) to the function $%
l_{2}(x)/l_{0}(x)$. Using (\ref{tailF'}) and (\ref{AsH}), we get
\begin{equation*}
R(\varepsilon c_{n})\leq C\varepsilon ^{1-\alpha (1-\rho )-\gamma }\frac{%
\mathbf{P}(\tau ^{-}>n)}{n}.
\end{equation*}

Hence on account of (\ref{integ1}) we conclude that%
\begin{equation}
R(\varepsilon c_{n})\leq C\frac{l(n)}{n^{2-\rho }}\varepsilon ^{1-\alpha
(1-\rho )-\gamma }.  \label{Remalpha21}
\end{equation}

\textbf{(ii)}. It follows from (\ref{ro}) that if $\beta =-1$, then $\alpha
\rho =1.$ By Lemma \ref{Renew2}, $H(x)\leq Cxl_{3}(x)$. Combining this
estimate with (\ref{P1}) \ yields
\begin{equation*}
b_{n}(j)\leq C\frac{jl_{3}(j)}{nc_{n}}.
\end{equation*}%
Recalling (\ref{lefttail}) \ and using (\ref{prop}), we obtain for any fixed
$\gamma \in (0,2-\alpha )$ and all $n\geq n(\gamma )$,%
\begin{align}
R(\varepsilon c_{n})& \leq C\sum_{j=0}^{[\varepsilon c_{n}]+1}\mathbf{P}%
\left( X\leq -j\right) \frac{jl_{3}(j)}{nc_{n}}  \notag \\
& \leq C_{1}\left( \varepsilon c_{n}\right) ^{2-\alpha }\frac{1}{nc_{n}}%
\frac{l_{3}(\varepsilon c_{n})}{l_{0}(\varepsilon c_{n})}  \notag \\
& \leq C_{2}\varepsilon ^{2-\alpha -\gamma }\frac{1}{n}\frac{%
c_{n}l_{3}(c_{n})}{c_{n}^{\alpha }l_{0}(c_{n})}\leq C_{3}\varepsilon
^{2-\alpha -\gamma }\frac{l(n)}{n^{2-\rho }},  \label{Remalpha3}
\end{align}%
where at the last step we have applied the inequalities $H(c_{n})\leq
Cc_{n}l_{3}(c_{n})\leq Cn^{\rho }l(n),$ \ following from (\ref{RenewRelat}),
(\ref{AsH}), and (\ref{integ1}), and the relation%
\begin{equation*}
\frac{1}{n}\sim \frac{\alpha }{2-\alpha }\frac{1}{c_{n}^{\alpha }l_{0}(c_{n})%
},
\end{equation*}%
being a corollary of (\ref{UU}).

Estimates (\ref{Remalpha21}) and (\ref{Remalpha3}) imply (\ref{cc}).
Combining (\ref{exact2}) with (\ref{cc}) leads to
\begin{equation}
\mathbf{P}\left( \tau ^{-}=n\right) \sim \frac{q(2-\alpha )l(n)}{\alpha
n^{2-\rho }}\int_{0}^{\infty }\frac{\mathbf{P}\left( M_{\alpha ,\beta }\in
dy\,\right) }{y^{\alpha }}=\frac{q(2-\alpha )l(n)}{\alpha n^{2-\rho }}%
\mathbf{E}\left( M_{\alpha ,\beta }\right) ^{-\alpha }.  \label{local}
\end{equation}%
Summation over $n$ gives%
\begin{equation*}
\mathbf{P}\left( \tau ^{-}>n\right) =\sum_{k=n+1}^{\infty }\mathbf{P}\left(
\tau ^{-}=k\right) \sim \frac{q(2-\alpha )}{\alpha \left( 1-\rho \right) }%
\frac{l(n)}{n^{1-\rho }}\mathbf{E}\left( M_{\alpha ,\beta }\right) ^{-\alpha
}.
\end{equation*}%
Comparing this with (\ref{integ1}), we get an interesting identity%
\begin{equation}
\mathbf{E}\left( M_{\alpha ,\beta }\right) ^{-\alpha }=\alpha (1-\rho
)/q(2-\alpha )  \label{IDENtity}
\end{equation}%
which, in view of (\ref{local}), completes the proof of Theorem \ref%
{LadEpoch} for $0<\alpha <2,\,\beta <1$.


\subsection{Proof of Theorem \protect\ref{LadEpoch} for $\{1<\protect\alpha%
<2,\protect\beta=1\}\cup\{\protect\alpha=2,\protect\beta=0\}$}

We consider only the lattice random walks with $a\in (0,1)$ and $h=1$. The
non-lattice case requires only minor changes. The main reason for the choice
of the lattice situation is the fact that only in this case we can get
oscillating sequences $Q_{n}^{-}$.

By the total probability formula,
\begin{equation}
\mathbf{P}(\tau ^{-}=n+1)=\sum_{k>-an}\mathbf{P}(S_{n}=an+k;\tau ^{-}>n)%
\mathbf{P}(X\leq -an-k).  \label{A1}
\end{equation}%
One can easily verify that under the conditions imposed on the distribution
of $X$ there exists a sequence $\delta _{n}\rightarrow 0$ such that $\delta
_{n}c_{n}\rightarrow \infty $ and
\begin{equation}
\mathbf{P}(X\leq -\delta _{n}c_{n})=o(n^{-1})\text{ as }n\rightarrow \infty .
\label{A2}
\end{equation}%
Using, as earlier, the notation $\mathcal{G}_{n}=(-an,-an+\delta
_{n}c_{n})\cap \mathbb{Z}$, and combining (\ref{A1}) with (\ref{A2}), we
obtain
\begin{equation*}
\mathbf{P}(\tau ^{-}=n+1)=\sum_{k\in \mathcal{G}_{n}}\mathbf{P}%
(S_{n}=an+k;\tau ^{-}>n)\mathbf{P}(X\leq -an-k)+o\Bigl(\frac{l(n)}{n^{2-\rho
}}\Bigr).
\end{equation*}%
Let $\left\{ an\right\} $ be the fractional part of $an$. By Theorem~\ref%
{SmallDev}
\begin{align}
\mathbf{P}(\tau ^{-}& =n+1)=\frac{g_{\alpha ,\beta }(0)+o(1)}{nc_{n}}%
\sum_{k\in \mathcal{G}_{n}}H(an+k)\mathbf{P}(X\leq -an-k)+o\Bigl(\frac{l(n)}{%
n^{2-\rho }}\Bigr)  \notag  \label{A3} \\
& =\frac{g_{\alpha ,\beta }(0)+o(1)}{nc_{n}}\sum_{j=0}^{\delta
_{n}c_{n}}H(\left\{ an\right\} +j)\mathbf{P}(X\leq -\left\{ an\right\} -j)+o%
\Bigl(\frac{l(n)}{n^{2-\rho }}\Bigr).
\end{align}

For $z\geq 0$ set%
\begin{equation*}
\omega (z;n):=\sum_{j=0}^{\delta _{n}c_{n}}H(z+j)\mathbf{P}(X\leq
-z-j),\qquad \omega (n):=\omega (0;n),
\end{equation*}%
and using the equality
\begin{equation}
\mathbf{E}(-S_{\tau ^{-}})=\int_{0}^{\infty }H(x)\mathbf{P}(X\leq -x)dx
\label{BounExp}
\end{equation}%
(see Doney \cite{Don82}) consider the "if" part of Theorem \ref{LadEpoch}
under the hypotheses of points (a), (b), and (c) \textbf{\ } separately.

\textbf{(a)} Condition $E(-S_{\tau ^{-}})=\infty $ implies
\begin{equation}
\omega (n)\rightarrow \infty \text{ as }n\rightarrow \infty .  \label{A4}
\end{equation}%
Since $H(u)$ is a renewal function, there exists a constant $C$ such that
\begin{equation}
H(u+v)-H(u)\leq C(v+1)\text{ for all }u,v\geq 0.  \label{RenF}
\end{equation}%
By (\ref{RenF}) and monotonicity of $H(u)$ and $\mathbf{P}(X\leq -u)$ we
conclude that
\begin{equation*}
\omega (\left\{ an\right\} ;n)\leq \sum_{j=0}^{\delta _{n}c_{n}}H(j+1)%
\mathbf{P}(X\leq -j)\leq \omega (n)+C\sum_{j=0}^{\delta _{n}c_{n}}\mathbf{P}%
(X\leq -j)
\end{equation*}%
and
\begin{align*}
\omega (\left\{ an\right\} ;n)& \geq \sum_{j=0}^{\delta _{n}c_{n}}H(j)%
\mathbf{P}(X\leq -j-1)\geq \omega (1;n)-C\sum_{j=0}^{\delta _{n}c_{n}}%
\mathbf{P}(X\leq -j) \\
& \geq \omega (n)-C\sum_{j=0}^{\delta _{n}c_{n}}\mathbf{P}(X\leq -j).
\end{align*}%
From (\ref{A4}) and the fact that $H(x)\rightarrow \infty $ as $x\rightarrow
\infty $ we deduce that
\begin{equation*}
\sum_{j=0}^{\varepsilon _{n}c_{n}}\mathbf{P}(X\leq -j)=o\left( \omega
(n)\right) \text{ as }n\rightarrow \infty .
\end{equation*}%
This yields $\omega (\left\{ an\right\} ;n)\sim \omega (n)$ as $n\rightarrow
\infty $ which, \ combined with (\ref{A3}), gives
\begin{equation}
\mathbf{P}(\tau ^{-}=n+1)=\frac{g_{\alpha ,\beta }(0)+o(1)}{nc_{n}}\omega
(n)+o\Bigl(\frac{l(n)}{n^{2-\rho }}\Bigr),\text{ }n\rightarrow \infty .
\label{A5}
\end{equation}%
Summing over $n\geq k$, we get, as $k\rightarrow \infty $,
\begin{equation*}
\frac{l(k)}{k^{1-\rho }}\sim \mathbf{P}(\tau ^{-}>k)=(g_{\alpha ,\beta
}(0)+o(1))\sum_{n=k}^{\infty }\frac{\omega (n)}{nc_{n}}+o\Bigl(\frac{l(k)}{%
k^{1-\rho }}\Bigr).
\end{equation*}%
We know from (\ref{ro})\ that $\rho =1-1/\alpha $ if $\{1<\alpha <2,\beta
=1\}$ or $\{\alpha =2,\beta =0\}.$ Since $\omega (n)$ is non-decreasing and,
by (\ref{asyma}), $c_{n}$ is regularly varying with index $1/\alpha $, Lemma~%
\ref{Ltec} implies
\begin{equation*}
\frac{\omega (n)}{nc_{n}}\sim \frac{1-\rho }{g_{\alpha ,\beta }(0)}\frac{l(n)%
}{n^{2-\rho }}\text{ as }n\rightarrow \infty .
\end{equation*}%
Consequently,
\begin{equation*}
\mathbf{P}(\tau ^{-}=n)=(1-\rho )\frac{l(n)}{n^{2-\rho }}(1+o(1)),\text{ }%
n\rightarrow \infty .
\end{equation*}%
This finishes the proof of (\ref{LE}) \ given $\mathbf{E}(-S_{\tau
^{-}})=\infty $.

\textbf{(b)} The assumption $\mathbf{E}(-S_{\tau ^{-}})<\infty $ and
relations (\ref{RenStand}), (\ref{RenewRelat}), and (\ref{BounExp})\ imply
\begin{equation*}
\sum_{j>\delta _{n}c_{n}}H(\left\{ an\right\} +j)\mathbf{P}(X\leq -\left\{
an\right\} -j)\rightarrow 0\text{ as }n\rightarrow \infty
\end{equation*}%
and, consequently,
\begin{equation*}
\omega (\left\{ an\right\} ;n)=\Omega (\left\{ an\right\} )+o(1)\text{ as }%
n\rightarrow \infty
\end{equation*}%
where%
\begin{equation*}
\Omega (\left\{ an\right\} ):=\sum_{j=0}^{\infty }H(\left\{ an\right\} +j)%
\mathbf{P}(X\leq -\left\{ an\right\} -j)
\end{equation*}%
Combining this representation with (\ref{A3}), observing that $\Omega
(\left\{ an\right\} )<C<\infty $ if $\mathbf{E}(-S_{\tau ^{-}})<\infty ,$
and recalling Lemma \ref{BehC} we see that
\begin{equation}
\mathbf{P}(\tau ^{-}=n+1)=\frac{g_{\alpha ,\beta }(0)}{nc_{n}}\Omega
(\left\{ an\right\} )+o\Bigl(\frac{l(n)}{n^{2-\rho }}\Bigr),\text{ }%
n\rightarrow \infty .  \label{A6}
\end{equation}

Denote
\begin{equation*}
\tilde{\Omega}(\left\{ an\right\} ):=\mathbf{P}(X\leq -\left\{ an\right\} )%
\mathrm{I}(\left\{ an\right\} >0)+\mathbf{P}(X\leq -1)\mathrm{I}(\left\{
an\right\} =0).
\end{equation*}%
Since $X$ is $(1,a)-$lattice, the quantity $\tilde{\Omega}(\left\{
an\right\} )$ is either $0$ or not less than some positive number $\tilde{%
\Omega}_{\ast }$. Furthermore, one can easily verify that $\Omega (\left\{
an\right\} )\geq \tilde{\Omega}(\left\{ an\right\} )$ and $\Omega (\left\{
an\right\} )=0$ if and only if $\tilde{\Omega}(\left\{ an\right\} )=0$.
Consequently, $\Omega (\left\{ an\right\} )$ is either\textbf{\ }zero or not
less than $\tilde{\Omega}_{\ast }$. Finally, in view of (\ref{A1}) $\tilde{%
\Omega}(\left\{ an\right\} )=0$ implies $\mathbf{P}(\tau ^{-}=n+1)=0$.
Therefore, we can rewrite (\ref{A6}) in the form
\begin{equation}
\mathbf{P}(\tau ^{-}=n+1)=\frac{g_{\alpha ,\beta }(0)}{nc_{n}}\Omega
(\left\{ an\right\} )(1+o(1)).  \label{A88}
\end{equation}

Now (\ref{A88}) and (\ref{A7}) give (\ref{LE}) with
\begin{equation}
Q_{n}^{-}:=C_{0}g_{\alpha ,\beta }(0)\Omega (\left\{ a(n-1)\right\} ).
\label{A89}
\end{equation}%
If $a=0$, then, evidently,
\begin{equation*}
Q_{n}^{-}\equiv C_{0}g_{\alpha ,\beta }(0)\Omega (0)=C_{0}g_{\alpha ,\beta
}(0)\mathbf{E}(-S_{\tau ^{-}}):=Q,
\end{equation*}%
and, consequently,
\begin{equation*}
\mathbf{P}(\tau ^{-}=n)=Q\frac{l(n)}{n^{2-\rho }}(1+o(1)).
\end{equation*}%
Comparing this asymptotic equality with the known tail behavior of the
distribution of $\tau ^{-}$, we infer that $Q$ should be equal to $1-\rho .$

This finishes the proof of (\ref{LE})under the conditions of point \textbf{%
(b)}.

To demonstrate the validity of \ (\ref{LE}) under the conditions of point
\textbf{(c)} one should made only evident minor changes of the just used
arguments and we omit the respective details.

To justify the "only if" part of Theorem \ref{LadEpoch} we need to show that
the sequence $\left\{ Q_{n}^{-},n\geq 1\right\} $ defined in (\ref{A89})
does not converge if $\mathbf{E}(-S_{\tau ^{-}})<\infty $ and $X$ is $(1,a) $%
-lattice with some $a\in (0,1)$.

Assume first that $a$ is rational, i.e. $a=i/j$ for some $1\leq i<j<\infty $
with g.c.d.$(i,j)=1$. Let $b=b(a)$ be the smallest natural number satisfying
$\{ab\}=1-a$. Then $\{a\left( kj+b\right) \}=1-a$ for all $k\geq 1$.
Consequently,
\begin{equation*}
\Omega (\{a\left( kj+b\right) \})=\sum_{m=0}^{\infty }H((1-a)+m)\mathbf{P}%
(X\leq -(1-a)-m)
\end{equation*}%
and
\begin{equation*}
\Omega (\{akj\})=\sum_{m=0}^{\infty }H(m)\mathbf{P}(X\leq -m).
\end{equation*}%
Observing that $\mathbf{P}(X\leq -m)=\mathbf{P}(X\leq -(1-a)-m)$, we obtain
\begin{align*}
\Omega (\{a\left( kj+b\right) \})-\Omega (\{akj\})& =\sum_{m=0}^{\infty }%
\bigl(H((1-a)+m)-H(m)\bigr)\mathbf{P}(X\leq -(1-a)-m) \\
& \geq \bigl(H(1-a)-H(0)\bigr)\mathbf{P}(X\leq -(1-a)) \\
& =H(1-a)\mathbf{P}(X<0) \\
& >\mathbf{P}(X<0).
\end{align*}%
From this inequality it follows that the sequence $\left\{ \Omega
(\{an\}),n\geq 1\right\} ,$ does not converge.

Assume now that $a$ is irrational. Define $\mathcal{N}_{1}:=\{n:\
\{an\}<(1-a)/3\}$ and $\mathcal{N}_{2}:=\{n:\ \{an\}\in (2(1-a)/3,(1-a))\}$.
The cardinality of each of the sets is infinte. In addition, one can easily
verify that
\begin{align*}
\Omega (\{an_{2}\})-\Omega (\{an_{2}\})& \geq \Bigl(H(2(1-a)/3)-H((1-a)/3)%
\Bigr)\mathbf{P}(X<0) \\
& \geq \mathbf{P}(X<0)\mathbf{P}\Bigl(\chi ^{+}\in \bigl((1-a)/3,2(1-a)/3%
\bigl)\Bigr)>0
\end{align*}%
for all $n_{1}\in \mathcal{N}_{1}$ and $n_{2}\in \mathcal{N}_{2}$.
Therefore, in the case of irrational shift the sequence $\Omega (\{an\}),$ $%
n\geq 1,$ is oscillating as well.

Theorem \ref{LadEpoch} is proved.

\begin{remark}
Analysing the proof of Theorem \ref{LadEpoch} one can see that the sequence $%
\left\{ Q_{n}^{-},n\geq 1\right\} $ in (\ref{LE}) may be written in the form%
\begin{equation*}
Q_{n}^{-}:=D(\left\{ a(n-1)\right\} ),
\end{equation*}%
where $D(x),0\leq x<1,$ is a nonnegative function and where we agree to take
$a=0$ for non-lattice distributions.
\end{remark}


\section{Discussion and concluding remarks}

We see by (\ref{weak}) that the distribution of \ $\tau ^{-}$ is completely
specified by the sequence $\{\mathbf{P}\left( \,S_{n}>0\right) ,n\geq 1\}$.
As we have mentioned in the introduction, the validity of condition (\ref%
{SpitDon}) is sufficient to reveal the asymptotic behavior of $\mathbf{P}%
(\tau ^{-}>n)$ as $n\rightarrow \infty $. Thus, \ in view of (\ref{integ1}),
informal arguments based on the plausible smoothness of $l(n)$ immediately
give the desired answer%
\begin{align*}
\mathbf{P}(\tau ^{-}& =n)=\mathbf{P}(\tau ^{-}>n-1)-\mathbf{P}(\tau ^{-}>n)
\\
& =\frac{l(n-1)}{\left( n-1\right) ^{1-\rho }}-\frac{l(n)}{n^{1-\rho }}%
\approx l(n)\left( \frac{1}{\left( n-1\right) ^{1-\rho }}-\frac{1}{n^{1-\rho
}}\right) \\
& \approx \frac{(1-\rho )l(n)}{n^{2-\rho }}\sim \frac{1-\rho }{n}\mathbf{P}%
(\tau ^{-}>n)
\end{align*}%
under the Doney condition only. In the present paper we failed to achieve
such a generality. However, it is worth mentioning that the Doney condition,
being formally weaker than the conditions of Theorem \ref{LadEpoch},
requires in the general case the knowledge of the behavior of the whole
sequence $\{\mathbf{P}\left( \,S_{n}>0\right) ,n\geq 1\},$ while the
assumptions of Theorem \ref{LadEpoch} concern a single summand only. \ Of
course, imposing a stronger condition makes our life easier and allows us to
give, in a sense, a constructive proof showing what happens in reality at
the distant moment $\tau ^{-}$ of the first jump of the random walk in
question below zero. Indeed, our arguments for the case $\left\{ 0<\alpha
<2,\ \beta <1\right\} $ demonstrate (compare (\ref{lefttail}), (\ref{aa}),
and (\ref{exact1})) that for any $x_{2}>x_{1}>0$,
\begin{align*}
\lim_{n\rightarrow \infty }\mathbf{P}(S_{n-1}& \in
(c_{n}x_{1},c_{n}x_{2}]|\tau ^{-}=n) \\
& \hspace{-1cm}=\lim_{n\rightarrow \infty }\frac{\mathbf{P}(\tau ^{-}>n-1)}{%
\mathbf{P}(\tau ^{-}=n)}\int_{x_{1}}^{x_{2}}\mathbf{P}(X<-yc_{n})\mathbf{P}%
(S_{n-1}\in c_{n}dy|\tau ^{-}>n-1) \\
& \hspace{-1cm}=\lim_{n\rightarrow \infty }\frac{\mathbf{P}(\tau
^{-}>n-1)q(2-\alpha )}{\mathbf{P}(\tau ^{-}=n)\alpha n}\int_{x_{1}}^{x_{2}}%
\frac{\mathbf{P}(X<-yc_{n})}{\mathbf{P}(X<-c_{n})}\mathbf{P}(S_{n-1}\in
c_{n}dy|\tau ^{-}>n-1) \\
& \hspace{-1cm}=\frac{q(2-\alpha )}{\alpha \left( 1-\rho \right) }%
\int_{x_{1}}^{x_{2}}\frac{\mathbf{P}(M_{\alpha ,\beta }\in dy)}{y^{\alpha }}.
\end{align*}%
In view of (\ref{IDENtity}) this means that the contribution of the
trajectories of the random walk satisfying $S_{n-1}c_{n}^{-1}\rightarrow 0$
or $S_{n-1}c_{n}^{-1}\rightarrow \infty $ as $n\rightarrow \infty $ to the
event $\left\{ \tau ^{-}=n\right\} $ is negligibly small in probability. A
"typical" trajectory looks in this case as follows: it is located over the
level zero up to moment $n-1$ with $S_{n-1}\in (\varepsilon c_{n}$ $%
,\varepsilon ^{-1}c_{n})$ for sufficiently small $\varepsilon >0$ and at
moment $\tau ^{-}=n$ the trajectory makes a big negative jump $%
X_{n}<-S_{n-1} $ of order $O(c_{n}).$

On the other hand, if $\left\{ 1<\alpha <2,\ \beta =1\right\} $ and $\mathbf{%
E}(-S_{\tau ^{-}})<\infty $, then, in the $(1,a)$-lattice case, for all $%
i\geq 0$,
\begin{equation*}
\mathbf{P}(S_{n-1}=\left\{ a(n-1)\right\} +i|\tau ^{-}=n)=\frac{H(\left\{
a(n-1)\right\} +i)\mathbf{P}(X\leq -\left\{ a(n-1)\right\} -i)}{\Omega
(\left\{ a(n-1)\right\} )}(1+o(1))
\end{equation*}%
provided that $\Omega (\left\{ a(n-1)\right\} )>0$. Since
\begin{equation*}
\sum_{i=0}^{\infty }H(\left\{ a(n-1)\right\} +i)\mathbf{P}(X\leq -\left\{
a(n-1)\right\} -i)=\Omega (\left\{ a(n-1)\right\} ),
\end{equation*}%
the main contribution to $\mathbf{P}\left( \tau ^{-}=n\right) $ is given in
this case by the trajectories located over the level zero up to moment $n-1$
with $S_{n-1}\in \lbrack 0,N]$ for sufficiently big $N$ and with not "too
big" jump $X_{n}<-S_{n-1}$ of order $O(1).$

Unfortunately, our approach to investigate the behavior of $\mathbf{P}(\tau
^{-}=n)$ in the case when $\mathbf{E}(-S_{\tau ^{-}})=\infty $ and $%
\{1<\alpha <2,\,\beta =1\}\cup \{\alpha =2,\beta =0\}$ is pure analytical
and does not allow us to extract typical trajectories without further
restrictions on the distribution of $X$. However, we can still deduce from
our proof some properties of the random walk conditioned on $\{\tau ^{-}=n\}$%
. Observe that, for any fixed $\varepsilon >0$, the trajectories with $%
S_{n-1}>\varepsilon c_{n}$ give no essential contribution to $\mathbf{P}%
(\tau ^{-}=n)$. More precisely, there exists a sequence $\delta
_{n}\rightarrow 0$ such that
\begin{equation*}
\mathbf{P}(S_{n-1}>\delta _{n}c_{n}|\tau ^{-}=n)=o(1).
\end{equation*}%
Furthermore, one can easily verify that if $\sum_{j=1}^{\infty }H(j)\mathbf{P%
}(X\leq -j)=\infty $, then for every $N\geq 1$,
\begin{equation*}
\sum_{j=1}^{N}\mathbf{P}(S_{n-1}=j;\tau ^{-}>n-1)\mathbf{P}(X\leq
-j)=o\left( \frac{l(n)}{n^{3/2}}\right) \quad \text{as }n\rightarrow \infty ,
\end{equation*}%
i.e., the contribution of the trajectories with $S_{n-1}=O(1)$ to $\mathbf{P}%
(\tau ^{-}=n)$ is negligible small. As a result we see that $%
S_{n-1}\rightarrow \infty $ but $S_{n-1}=o(c_{n})$ for all "typical"
trajectories meeting the condition $\{\tau ^{-}=n\}$. Thus, in the case $\{{%
1<\alpha <2,\beta =1\}}\cap \{\mathbf{E}(-S_{\tau ^{-}})=\infty \}$ we have
a kind of "continuous transition" between the different strategies for $%
\{\beta <1\}$ and $\{{1<\alpha <2,\beta =1\}}\cap \{\mathbf{E}(-S_{\tau
^{-}})<\infty \}$.

\vspace*{12pt} \textit{Acknowledgement}. The first version of the paper was
based on the preprint \cite{VW07}. We are thankful to an anonimous referree
who attracted our attention to the fact that by our methods one can prove
not only local theorems \ref{LadEpoch} and \ref{LadEpoch'} but the Gnedenko
and Stone type conditional local theorems \ref{NormalDev'}-\ref{SmallDev} as
well. \ \textbf{\ }V.W. is thankful to Anatoly Mogulskii for simulating
discussions on ladder epochs\textbf{.}

This project was started during the visits of the first author to the
Weierstrass Institute in Berlin and the second author to the Steklov
Mathematical Institute in Moscow. The hospitality of the both institutes is
greatly acknowledged.



\begin{thebibliography}{99}
\bibitem{AD99} Alili L., Doney R.A. \newblock Wiener-Hopf factorization
revisited and some applications. \newblock {\em Stoc. Stoc. Rep.},
66:87-102, 1999.

\bibitem{Bor04} Borovkov A.A. \newblock On the asymptotics of distributions
of first-passage times. II. \newblock {\em Math. Notes}, 75:322-330, 2004.

\bibitem{BGT} Bingham N.H., Goldie C.M., Teugels J.L.
\newblock {\em Regular
variation.} \newblock Cambridge: Cambridge University Press, 1987, 494 pp.

\bibitem{BJD06} Bryn-Jones A., Doney R.A. \newblock A functional limit
theorem for random walk conditioned to stay non-negative.
\newblock {\em J.
London Math. Soc. (2)} 74:244-258, 2006.

\bibitem{Car05} Caravenna F. \newblock A local limit theorem for random
walks conditioned to stay positive.
\newblock{\em Probab. Theory Relat.
Fields,} 133:508-530, 2005.


\bibitem{Chow} Chow Y.S. \newblock On moments of ladder height variables. %
\newblock {\em Adv. in Appl. Math.} 7:46-54, 1986.

\bibitem{Don82a} Doney R.A. \newblock On the exact asymptotic behavior of
the distribution of ladder epochs. \newblock {\em Stoch. Proc. Appl.}
12:203-214, 1982.

\bibitem{Don82} Doney R.A. \newblock On the existence of the mean ladder
height for random walk. \newblock{\em Probab. Theory Relat.
Fields,} 59:373-382, 1982.

\bibitem{Don85} Doney R.A. \newblock Conditional limit theorems for
asymptotically stable random walks.
\newblock{\em Probab. Theory Relat.
Fields,} 70:351-360, 1985.

\bibitem{Don95} Doney R.A. \newblock Spitzer's condition and the ladder
variables in random walks. \newblock{\em Probab. Theory Relat.
Fields,} 101:577-580, 1995.

\bibitem{Don97} Doney R.A. \newblock One-sided local large deviation and
renewal theorems in the case of infinite mean.
\newblock{\em Probab. Theory
Relat. Fields,} 107:451-465, 1997.

\bibitem{Dur78} Durrett R. \newblock Conditioned limit theorems for some
null recurrent markov processes. \newblock {\em Ann. Probab.} 6:798-827,
1978.

\bibitem{Epp} Eppel M.S. \newblock A local limit theorem for the first
overshoot. \newblock {\em Siberian Math. J.,} 20:130-138, 1979.

\bibitem{FE} Feller W.
\newblock{\em An Introduction to Probability Theory
and its Applications.} V.2, Willey, New York-London-Sydney-Toronto, 1971.

\bibitem{GOT82} Greenwood P., Omey E., Teugels J.L. \newblock Harmonic
renewal measures and bivariate domains of attraction in fluctuation theorey. %
\newblock {\em Z.Wahrscheinlichkeitstheorie verw. Gebiete,} 61:527-539, 1982.

\bibitem{MR05} Mogulskii A.A., Rogozin B.A. \newblock Local theorem for the
first hitting time of a fixed level by a random walk.
\newblock {\em
Siberian Adv. Math.} 15(3): 1-27, 2005.

\bibitem{Pet75} Petrov V.V.
\newblock {\em Sums of independent random
variables.} \newblock Springer, Berlin-Heidelberg-New York, 1975.

\bibitem{Rog71} Rogozin B.A. \newblock On the distrbution of the first
ladder moment and height and fluctustions of a random walk.
\newblock{\em
Theory Probab. Appl.,} 16:575-595, 1971.

\bibitem{Sen76} Seneta E. \newblock Regularly varying functions. Lecture
Notes in Mathematics, v. 508, Springer, \ Berlin-Heidelberg-New York, 1976.

\bibitem{Spit} Spitzer F. \newblock Principles of random walk, Van Nostrand,
Princeton, N.J., 1964.

\bibitem{VW07} Vatutin V.A., Wachtel V. \newblock Local limit theorem for
ladder epochs. \newblock WIAS Preprint 1200, 2007.

\bibitem{Zol57} Zolotarev V.M. \newblock Mellin-Stiltjes transform in
probability theory. \newblock{\em Theory Probab. Appl.,} 2 : 433-460, 1957.
\end{thebibliography}
\end{document}